\newif\ifdraft
\renewcommand{\boxed}[1]{\tcboxmath[boxrule=.5pt,colback=cyan!5]{#1}}
\newcommand{\eqdef}{\defeq}
\newcommand{\hide}[1]{}  
\newcommand{\bnu}{\bar\nu}
\newcommand\cite*[2]{\cite[#2]{#1}}
\newcommand{\inv}{^{-1}}
\newcommand{\vp}{\varphi}
\newcommand{\nwc}{\newcommand} 
\nwc{\rplus}{{\R^+}}
\nwc{\nplus}{{\N_0}} 
\nwc{\supj}{^{(j)}}
\nwc{\supk}{^{(k)}}
\nwc{\supn}{^{(n)}}
\nwc{\suph}[1]{{#1}^h}
\nwc{\suphk}[1]{{#1}^{h_k}}
\nwc{\subhk}[1]{{#1}_{h_k}}
\nwc{\supht}[1]{{#1}^{h,\tau}}
\nwc{\subhtk}[1]{{#1}_{h_k,\tau_k}}
\nwc{\subht}[1]{{#1}_{h,\tau}}
\nwc{\scalek}[1]{\breve{#1}_k}
\DeclareBoldMathCommand{\one}{\mathbbm{1}}
\DeclareMathOperator{\re}{\rm Re}
\renewcommand{\ge}{\geqslant}
\begin{document}
  \title[\ifdraft DRAFT \mmddyyyydate\today\ \currenttime\ \fi Coagulation and universal scaling limits]
{Coagulation and universal scaling limits for critical Galton-Watson processes}
  \dedicatory{Dedicated to the memory of Jack Carr.}
  \author[Iyer]{Gautam Iyer\textsuperscript{1}}
  \address{%
    \textsuperscript{1} Department of Mathematical Sciences,
    Carnegie Mellon University,
    Pittsburgh, PA 15213.}
  \email{gautam@math.cmu.edu}

  \author[Leger]{Nicholas Leger\textsuperscript{2}}
  \address{
    \textsuperscript{2} Department of Mathematics,
    University of Houston,
    4800 Calhoun Rd.,
    Houston, TX 77004.}
  \email{nleger@math.uh.edu}

  \author[Pego]{Robert L. Pego\textsuperscript{1}}
  \email{rpego@cmu.edu}
  \thanks{
    This material is based upon work partially supported by
    the National Science Foundation under grants
    the National Science Foundation (through grants
    DMS-1252912 to GI,
    DMS-1515400 to RP),
    the Simons Foundation (through grants
    \#393685 to GI,
    \#395796 to RP),
    and the Center for Nonlinear Analysis (through grant NSF OISE-0967140).
  }
  \subjclass[2010]{Primary 60J80; Secondary 37E20, 35Q70}
  \keywords{Smoluchowski equation, multiple coalescence, Galton-Watson processes, CSBPs, universal laws, Bernstein transform}

  \begin{abstract}
    The basis of this paper is the elementary observation that the $n$-step descendant distribution of any Galton-Watson process satisfies a discrete Smoluchowski coagulation equation with multiple coalescence.
Using this we obtain simple necessary and sufficient criteria for the convergence of scaling limits of critical Galton-Watson processes in terms of scaled family-size distributions and a natural notion of convergence of L\'evy triples.
    Our results provide a clear and natural interpretation, and an alternate proof, of the fact that the L\'evy jump measure of certain CSBPs satisfies a generalized Smoluchowski equation.
    (This result was previously proved by Bertoin and Le Gall in 2006.)

    Our analysis shows that the nonlinear scaling dynamics of CSBPs becomes \emph{linear and purely dilatational} when expressed in terms of the L\'evy triple associated with the branching mechanism.
    We prove a continuity theorem for CSBPs in terms of the associated L\'evy triples,
    and use our
    scaling analysis to prove existence of \emph{universal} critical Galton-Watson and CSBPs analogous to W. Doeblin's ``universal laws''.
    Namely, these universal processes generate \emph{all} possible critical and subcritical CSBPs as subsequential scaling limits.

    Our convergence results rely on a natural topology for L\'evy triples and a continuity theorem for Bernstein transforms (Laplace exponents) which we develop in a self-contained appendix.
  \end{abstract}
  \maketitle
  \tableofcontents
  \newpage

  \section{Introduction.}\label{s:Intro}
In 1875, Galton and Watson~\cite{WatsonGalton75} took up an investigation into
the phenomenon of ``the decay of the families of men who occupied conspicuous
positions in past times.'' The problem, posed by Galton, was summarized by the 
Rev. H. W. Watson as follows.
\begin{quote}
``Suppose that at any instant all the adult males of a large nation have
different surnames, it is required to find how many of these surnames will have
disappeared in a given number of generations upon any hypothesis, to be determined
by statistical investigations, of the law of male population.''
\end{quote}
Their analysis led to the eventual creation of a class of time-discrete, size-discrete branching processes now called Galton-Watson processes or Bienaym\'e-Galton-Watson processes, 
 since key aspects of the topic were discovered thirty years earlier by I. J. Bienaym\'e (see~\cite{HeydeSeneta77,Bacaer11}).
  The study of these and other branching processes has led to numerous interesting lines of research, and we refer the reader to~\cite{AthreyaNey04} where the basic theory and applications are beautifully laid out.

Branching and coalescence are intrinsically connected concepts, and are in some sense dual.
Indeed, following a tree from root to leaf leads to branching, and following it from leaf to root leads to coalescence.
Mathematically, a deep connection between branching and coalescence was exposed by Kingman~\cite{Kingman82,Kingman82a} 
in his efforts to address questions of ancestry in population genetics.
Loosely speaking, ancestral lineages branch as time proceeds forward, 
while the Kingman coalescent traces the sizes of groups or clusters of individuals 
with a common living ancestor as time evolves backwards.
Kingman was particularly concerned with models that emerged from early work of
Moran and Wright that hold total population fixed.
His work was subsequently expanded to study a general class of exchangeable coalescent models,
and a natural completion---the class of $\Lambda$-coalescents---was identified and studied by
Pitman~\cite{Pitman99a} and Sagitov~\cite{Sagitov99}.
We refer to the book \cite{Bertoin06} and the survey papers \cite{Berestycki09,GnedinIksanovEA14}
for an overview of the now-extensive literature in this area. 
For aspects particularly related to CSBPs, we refer 
to~\cite{
    BertoinLeGall00,
    Lambert03,
    Schweinsberg03,
    BirknerBlathEA05,
    BerestyckiBerestyckiEA08,
    BerestyckiBerestyckiEA14
  }.
%
  

The present work focuses on classical Galton-Watson processes and their 
continuum limits---continuous state branching processes (CSBPs)---which 
are models in which populations may fluctuate.
Our work was originally motivated by observations of Bertoin and Le Gall
contained in a series of papers that relate certain coalescence models to
CSBPs~\cite{BertoinLeGall03,BertoinLeGall05,BertoinLeGall06}.
In \cite{BertoinLeGall06} the authors prove a striking result
which shows that the L\'evy jump measure associated to certain CSBPs satisfies
a generalized Smoluchowski coagulation equation with multiple coalescence.  
One motivation for this work was to provide a simple and natural explanation for this fact.

Our starting point 
is an elementary observation showing that the $n$-step descendant distribution of the Galton-Watson process satisfies a discrete Smoluchowski coagulation equation with multiple coalescence.
We use this basic connection to establish simple criteria for the convergence of scaling limits of critical Galton-Watson processes and study the coagulation dynamics of the L\'evy jump measure of certain CSBPs. (As we consider only classical time-homogeneous Galton-Watson processes, we make no connection with the large literature on branching in varying and random environments, except for recent work of Bansaye and Simatos~\cite{BansayeSimatos15} where the authors develop general convergence criteria that compare closely with ours.)

Our scaling and limit analysis of the coagulation dynamics associated with critical Galton-Watson processes leads to a simple yet striking observation:
The \emph{nonlinear scaling dynamics of CSBPs becomes linear and purely dilational} when expressed in terms of the L\'evy triple that represents the branching mechanism.
This observation is parallel to results of \cite{MenonPego08}
that concern the dynamic scaling analysis of solvable Smoluchowski equations, 
and extends an analogy between CSBPs and infinite divisibility which has been
evident since the work of Grimvall \cite{Grimvall74} classifying all continuum limits
of Galton-Watson processes as CSBPs.
Here we make use of the dilational representation to establish the existence
of certain \emph{universal} critical Galton-Watson processes and CSBPs which generate
\emph{all} critical and subcritical CSBPs as subsequential scaling limits.
These universal processes are analogous to W. Doeblin's ``universal laws'' 
in classical probability theory \cite[XVII.9]{Feller71}.
Further, they supply
a precise interpretation to a remark made by Grey~\cite{Grey74} to the effect that a large class of ``critical and subcritical processes $\ldots$ do not seem to lend themselves to suitable scaling'' which yields a well-defined limit.

  \subsection*{Summary of results.}
  The principal new results we prove in this paper include the existence of universal Galton-Watson processes, and the existence of universal CSBPs.

  \begin{theorem}\label{t:IntroUnivGW}
    There exists a critical Galton-Watson process $X_\star$ such that \emph{any} (sub)\-critical CSBP that remains finite almost surely can be obtained as a subsequential scaling limit of $X_\star$.
  \end{theorem}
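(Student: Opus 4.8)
The plan is to build $X_\star$ by "diagonalizing" over a countable dense family of target CSBPs, exploiting the paper's central insight that scaling acts \emph{linearly and dilationally} on the L\'evy triple of the branching mechanism. First I would recall that, by Grimvall's theorem together with the continuity theorem for CSBPs stated earlier (in terms of convergence of L\'evy triples), a (sub)critical CSBP that stays finite a.s.\ is determined by a branching mechanism $\psi$ whose L\'evy triple $(a,b,\nu)$ satisfies the relevant critical/subcritical and finiteness conditions; moreover such $\psi$ can be approximated, in the natural topology on L\'evy triples developed in the appendix, by a sequence of \emph{rescaled Galton-Watson descendant distributions}. The discrete Smoluchowski observation gives, for any Galton-Watson offspring law, an explicit description of the $n$-step descendant distribution, and hence of the candidate approximating triples; the dilational structure means that applying a scaling (in size and in generation number) to a fixed Galton-Watson process moves its associated triple along a one-parameter dilation orbit, so a single well-chosen process has a scaling closure that is already quite large.

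Next I would carry out the diagonal construction. Fix a countable set $\{\psi_j\}_{j\in\N}$ of branching mechanisms that is dense (in the L\'evy-triple topology) among all (sub)critical finite CSBP mechanisms; density in a suitable Polish topology is where the appendix's continuity theorem for Bernstein transforms does the real work. For each $j$ choose, using the criteria for convergence of scaling limits of critical Galton-Watson processes proved earlier in the paper, a critical Galton-Watson process $X^{(j)}$ and scaling parameters along which $X^{(j)}$ converges to the CSBP with mechanism $\psi_j$. Then assemble $X_\star$ as a single critical Galton-Watson process that "contains" every $X^{(j)}$ on a sparse sequence of generation-blocks: concretely, one interleaves the offspring laws so that over an appropriate window of generations $X_\star$ looks (after the matching size rescaling) arbitrarily close to $X^{(j)}$ for each $j$ infinitely often. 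Criticality of $X_\star$ is arranged by normalizing each block's offspring law to have mean one, which is consistent because each $X^{(j)}$ is itself critical. This yields, for each $j$, a subsequence of scalings of $X_\star$ converging to the CSBP with mechanism $\psi_j$.

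Finally I would upgrade "dense subset $\{\psi_j\}$" to "all targets" by a standard approximation-and-extraction argument: given an arbitrary (sub)critical finite CSBP with mechanism $\psi$, pick $\psi_{j_k}\to\psi$ in the L\'evy-triple topology, pick for each $k$ a scaling of $X_\star$ that is within distance $1/k$ (in the same topology, via the Bernstein-transform continuity theorem) of the CSBP with mechanism $\psi_{j_k}$, and then a diagonal choice of scalings yields a subsequence of scalings of $X_\star$ whose associated triples converge to that of $\psi$; the continuity theorem for CSBPs then promotes this to convergence of the processes themselves. The main obstacle is the diagonal interleaving step: one must verify that splicing together offspring laws across generation-blocks does not destroy the scaling behavior within each block — i.e.\ that the "transient" generations between blocks, and the residual mass they contribute, can be made asymptotically negligible after rescaling, uniformly enough that the limit picks out exactly the intended $\psi_j$. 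Controlling this requires quantitative use of the convergence criteria (so that one knows \emph{how} fine each block must be), together with the fact, emphasized in the introduction, that the scaling flow is purely dilational on triples, so that "negligible at one scale" propagates cleanly to the next.
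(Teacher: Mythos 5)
Your construction has a genuine flaw at its core: the object you build is not a Galton-Watson process in the sense of the theorem. Interleaving different offspring laws over blocks of generations produces a branching process in a \emph{varying environment}, whereas the statement (and the whole paper, which explicitly restricts to classical time-homogeneous processes) requires a single critical offspring distribution $\hat\pi$ used at every generation. Moreover, even within a varying-environment framework the block idea is delicate for scaling limits: the rescaled process at time $t$ involves \emph{all} generations up to $\lfloor t/\tau_k\rfloor\to\infty$, so each scaling window necessarily averages over infinitely many of your blocks rather than ``seeing'' just one, and it is not clear the transient contributions can be made negligible. So the diagonalization-in-time does not prove the theorem as stated.

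The paper's proof diagonalizes in \emph{scale}, not in time, and this is the idea your proposal gestures at (``a single well-chosen process has a scaling closure that is already quite large'') but then abandons. One fixes a countable family of L\'evy triples whose subsequential limits exhaust all triples, reduces to triples of the form $(0,0,\mu_j)$ with $\int d\mu_j\le j$, and then uses a packing lemma (Lemma~\ref{lem:uni1}): with rapidly growing constants $c_j$ and inductively chosen dilations $b_j$, the single Bernstein function $\Phi_\star(q)=\sum_j c_j^{-1}\Phi_j(b_j q)$ satisfies $c_k\Phi_\star(b_k^{-1}q)-\Phi_k(q)\to0$, so its L\'evy measure $\mu_\star$ is a \emph{universal L\'evy triple} under the dilational scaling \eqref{e:ScaleLambda}. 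Coarse-graining $\mu_\star$ onto the integers as in \eqref{e:DefHatpi} yields one fixed critical family-size distribution $\hat\pi$; universality survives the coarse-graining (via the left/right distribution functions and Theorem~\ref{t:LRconv}), and Proposition~\ref{p:GWConv} then converts L\'evy-convergence of the rescaled triples into convergence of the rescaled processes $Y^{(k)}$ to the target CSBP. Your final ``dense family plus diagonal extraction'' step is fine in spirit and is indeed how the paper passes from a countable family to all targets, but without the packing construction there is no single time-homogeneous $\hat\pi$ to which it can be applied.
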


  \begin{theorem}\label{t:IntroUnivCSBP}
    There exists a critical CSBP $Z_\star$ such that \emph{any} (sub)critical CSBP that remains finite almost surely can be obtained as a subsequential scaling limit of~$Z_\star$.
  \end{theorem}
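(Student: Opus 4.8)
The plan is to transport the construction of the universal Galton-Watson process $X_\star$ from Theorem~\ref{t:IntroUnivGW} to the continuum setting, using the correspondence between CSBPs and Lévy triples (branching mechanisms) together with the continuity theorem for CSBPs in terms of Lévy triples that the paper establishes. Recall that a (sub)critical CSBP is determined by its branching mechanism $\psi$, equivalently by a Lévy triple $(a,b,\pi)$ with $\psi(\lambda)=a\lambda^2+b\lambda+\int_0^\infty(e^{-\lambda r}-1+\lambda r)\,\pi(dr)$ and $b\ge 0$ for (sub)criticality, and that the scaling dynamics acts on the triple purely dilationally: rescaling the CSBP in time by $h$ and in size by $\tau$ conjugates $\psi$ to $\psi_{h,\tau}(\lambda)=h\tau\,\psi(\lambda/\tau)$, which on the triple is simply a rescaling of $a$, $b$, and a pushforward-plus-rescaling of $\pi$. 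Since the scaling limits of $Z_\star$ are governed by subsequential limits of these dilated triples, the question reduces to: \emph{does there exist a single Lévy triple whose dilational orbit is dense (in the appropriate Lévy-triple topology) in the set of all (sub)critical triples corresponding to CSBPs that remain finite a.s.?}

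First I would identify, using the continuity theorem for Bernstein transforms / Laplace exponents developed in the appendix, the precise topology on Lévy triples under which convergence of triples is equivalent to convergence of the associated CSBPs (in the sense of finite-dimensional distributions, or of Laplace exponents locally uniformly). Second, I would invoke Theorem~\ref{t:IntroUnivGW}: there is a critical Galton-Watson process $X_\star$ all of whose subsequential scaling limits exhaust the (sub)critical finite CSBPs. Passing $X_\star$ itself to a continuum limit — one obtained by a fixed reference scaling $h_k,\tau_k\to\infty$ chosen so that the scaled family-size distributions of $X_\star$ converge — produces a CSBP $Z_\star$ whose branching mechanism is the limit triple of $X_\star$'s scaled offspring triples. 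The key point is that the scaling limits of $Z_\star$ are exactly the \emph{iterated} scaling limits of $X_\star$, and because the scaling action on triples is linear and dilational, the closure of the dilational orbit of $Z_\star$'s triple contains the closure of the dilational orbits of all the subsequential scaling limits of $X_\star$, which by Theorem~\ref{t:IntroUnivGW} is everything. A diagonal/subsequence argument then extracts, for any target (sub)critical finite CSBP $Z$, a sequence of dilations of $Z_\star$ converging to $Z$.

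The main obstacle I anticipate is the interchange-of-limits step: showing that a subsequential scaling limit of the continuum process $Z_\star$ can be realized as (or approximated by) a single scaling limit, rather than a limit of limits. This requires that the dilational orbit closure be well-behaved — concretely, that if triple $(a',b',\pi')$ lies in the closure of the dilational orbit of $(a,b,\pi)$, and the target $(a'',b'',\pi'')$ lies in the closure of the dilational orbit of $(a',b',\pi')$, then $(a'',b'',\pi'')$ already lies in the closure of the dilational orbit of $(a,b,\pi)$. This is a statement about the Lévy-triple topology being compatible with the dilation semigroup action, and I would prove it by a quantitative approximation: approximate $(a'',b'',\pi'')$ within $\varepsilon$ by a dilation of $(a',b',\pi')$, then approximate that dilation within $\varepsilon$ by a further dilation of $(a,b,\pi)$, using joint continuity of the dilation map in the scaling parameter and the triple. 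One must also verify that the ``remains finite a.s.'' condition — which in terms of the triple is Grey's condition $\int^\infty d\lambda/\psi(\lambda)<\infty$ or equivalently a growth condition on $\psi$ near $\infty$ — is preserved and attainable in the limit, so that $Z_\star$ is genuinely a bona fide finite CSBP; this is where the choice of reference scaling for $X_\star$ must be made carefully, and where Grey's remark quoted in the introduction becomes a precise obstruction that the construction must navigate.
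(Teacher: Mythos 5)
Your overall framing---linearize the scaling dynamics on L\'evy triples (Proposition~\ref{p:Lin}) and transfer triple convergence to process convergence via the continuity theorem (Theorem~\ref{t:CSBPCont})---matches the paper, but the central step of your construction has a genuine gap. You define $Z_\star$ as a continuum limit of the universal Galton-Watson process $X_\star$ along one fixed reference scaling and then claim that the dilational orbit closure of $Z_\star$'s triple contains all subsequential scaling limits of $X_\star$. The transitivity lemma you propose is true, but it gives the wrong containment: with $\hat\lambda$ the discrete triple of $X_\star$ and $\lambda_{Z_\star}$ the triple of $Z_\star$, it shows that every limit of dilations of $\lambda_{Z_\star}$ is again a limit of dilations of $\hat\lambda$, i.e.\ $\overline{O(\lambda_{Z_\star})}\subseteq\overline{O(\hat\lambda)}$, whereas you need every element of $\overline{O(\hat\lambda)}$ to lie in $\overline{O(\lambda_{Z_\star})}$. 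That reverse inclusion fails for a generic reference scaling: since $X_\star$ generates \emph{all} (sub)critical triples as subsequential limits, your reference subsequence could converge to, say, the pure-diffusion triple $(a_0,0,0)$, whose dilational orbit closure consists only of multiples of $\delta_0$ and contains no triple with a nontrivial L\'evy measure. Being a scaling limit of a universal object does not make the limit universal, and the diagonal argument you sketch cannot repair this, because the outer dilation parameters tend to infinity and dilation by large factors does not map small errors to small errors uniformly.

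What the argument needs---and what the paper supplies---is a single L\'evy triple whose \emph{own} dilational orbit closure is everything. The paper constructs this directly (Proposition~\ref{p:UniversalLT}) via the packing lemma (Lemma~\ref{lem:uni1}): take triples $(0,0,\mu_k)$ dense among all triples and superpose their Bernstein transforms as $\Phi_\star(q)=\sum_j c_j\inv\Phi_j(b_jq)$, with $c_j$ growing fast and $b_j$ chosen inductively so that $c_k\Phi_\star(b_k\inv q)-\Phi_k(q)\to0$; the CSBP generated by the resulting triple $(0,0,\mu_\star)$ is then universal immediately from Proposition~\ref{p:Lin} and Theorem~\ref{t:CSBPCont}, with no Galton-Watson intermediary (indeed the paper's logic runs the other way: the universal triple is used to build the universal family-size distribution in Theorem~\ref{t:UniversalGW}). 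If you insist on your route, you must choose the reference scaling so that the limiting triple is itself universal, and proving such a choice exists essentially amounts to producing a universal triple in the first place. Two smaller points: Grey's condition is not needed for $Z_\star$ to be a finite CSBP---conservativeness is automatic for (sub)critical mechanisms of the form \eqref{e:Psi}, and the paper's $Z_\star$ actually violates Grey's condition since $\Psi_\star'(\infty)<\infty$---and the reference scalings should satisfy $h_k,\tau_k\to0$, not $\to\infty$.
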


These results follow naturally and directly from our analysis of discrete coalescence models in terms of the topology of L\'evy triples that we introduce.
The proof of Theorem~\ref{t:IntroUnivCSBP} in particular is based upon a new continuity theorem (Theorem~\ref{t:CSBPCont}) that shows that
convergence of (sub)critical CSBPs is equivalent to convergence of the L\'evy triples which generate them. 
Our paper also contains a variety of other results concerning these issues.
In the rest of this section, we summarize the main results and organization of the paper.

  \subsubsection*{Time and size discrete coalescence and branching.}

  Part~\ref{p:disc} (Sections~\ref{s:DCoag}--\ref{s:BernsteinEvol}) is devoted to the study of discrete-time, discrete-size coalescence and branching.
  We begin by describing (Section~\ref{s:DCoag}) a time-discrete Markov process~$C$ modeling sizes of clusters undergoing coalescence. 
  (A more general class of processes of the type we study has been introduced independently in \cite{GrosjeanHuillet16}, as discussed in section~\ref{s:DCoag} below.)
  We show (Proposition~\ref{p:PnHat}) that the backward equation of the process~$C$ is exactly a time-discrete analog of the well known Smoluchowski coagulation equation~\cite{Smoluchowski16,Smoluchowski17}, a mean-field rate-equation model of clustering.
  Moreover we show that the one dimensional distributions of coordinates of~$C$ have the same distribution as a Galton-Watson process (see \cite{GrosjeanHuillet16} for more extensive results concerning genealogies).
  This provides a elementary and fundamental connection between branching and coalescence, and shows 
  that the $n^\text{th}$ generation descendant distribution of a critical Galton-Watson process is itself a solution of the discrete Smoluchowski equation with multiple coalescence.
  This will be used extensively in the scaling analysis performed in Part~\ref{p:scaling}, below.

  We conclude our treatment of the discrete scenario by introducing the \emph{branching mechanism} and the \emph{Bernstein transform} for Galton-Watson processes.
  We show (Proposition~\ref{p:BtEvol}) that
  the discrete coagulation dynamics gives an elegant (discrete-time) evolution equation for the Bernstein transform in terms of the branching mechanism.
  This parallels the evolution of the Bernstein transform of the size distribution in the Smoluchowski dynamics (see~\cite{BertoinLeGall06,IyerLegerEA15} or equation~\eqref{e:PhiEvol1}, below), and is a direct analog of the backward equation for critical CSBPs that become extinct almost surely.

  \subsubsection*{Convergence criteria for scaling limits of critical Galton-Watson processes.}
  Part~\ref{p:scaling} (Sections~\ref{s:CSBPintro}--\ref{s:Grimvall}) concerns the study of scaling limits of critical Galton-Watson processes.
  We begin with a brief introduction to CSBPs (Section~\ref{s:CSBPintro}) and discuss Bertoin and Le Gall's striking result from \cite{BertoinLeGall06} showing that the {L\'evy jump measure} of certain CSBPs satisfies a special form of the Smoluchowski equation with multiple coalescence.

  Using improvements in the theory of Bernstein functions \cite{SchillingSongEA10} (known in probability as Laplace exponents), we obtain simple and precise criteria for the existence of scaling limits of critical Galton-Watson processes expressed directly in terms of L\'evy convergence of rescaled reproduction laws (Propositions~\ref{p:BrMechConv}, \ref{p:BTConv} and~\ref{p:GWConv}).
  These convergence results provide a clear description of how the L\'evy jump measure of certain CSBPs arises as the rescaled limit of $n$-step descendant distributions of Galton-Watson processes, and explains how the generalized Smoluchowski equation arises naturally in~\cite{BertoinLeGall06}.
  The proofs make use of a continuity theorem for Bernstein transforms which appears to be little known.  As we expect this to be of wider utility, we develop this theory 
separately\footnote{
    The well known continuity theorems guarantee that weak-$\star$ convergence of probability measures is equivalent to pointwise convergence of their characteristic functions (or Laplace transforms).
    In our context, the continuity theorem shows that convergence of L\'evy triples is equivalent to pointwise convergence of the Bernstein transforms.
    The proof is similar to that of the classical continuity theorems. However, since it is not readily available in the literature we prove it in Appendix~\ref{s:CT} (Theorem~\ref{t:Cont}).
  }
 in Appendix~\ref{s:CT}.

  Finally, the convergence of scaling limits of general Galton-Watson processes is a classical subject and has been studied by many authors.
In section~\ref{s:Grimvall} we compare our criteria to the classical
convergence criteria provided by Grimvall~\cite{Grimvall74} and the
simplified criteria recently derived by Bansaye and Simatos \cite{BansayeSimatos15}
in connection with their more general investigation of scaling limits for
Galton-Watson processes in varying environments.  
We establish (Proposition~\ref{p:GrimvallEquiv}) equivalence
between these criteria and L\'evy triple convergence in the critical case.

  \subsubsection*{Universality in Galton-Watson processes and CSBPs.}

  Part~\ref{p:univ} (Sections~\ref{s:universalGW}--\ref{s:LinUniv}) is devoted to studying universality and proving Theorems~\ref{t:IntroUnivGW} and \ref{t:IntroUnivCSBP}.
  For Galton-Watson processes, we use our simplified convergence results to show  that there exists a Galton-Watson process with a ``universal'' family size distribution (Theorem~\ref{t:UniversalGW}).

The proof that the scaling dynamics of CSBPs becomes {linear and purely dilatational} when expressed in terms of the L\'evy triple associated with the branching mechanism 
comes down to a simple calculation (Proposition~\ref{p:Lin}).
We then show that the map from L\'evy triple to solution is bicontinuous in a particular sense, 
proving a continuity theorem for CSBPs (Theorem~\ref{t:CSBPCont}).
  From this and our study of dilational dynamics for L\'evy triples in section~\ref{s:universalGW}, we infer the existence of universal CSBPs whose subsequential scaling limits yield all possible critical and subcritical CSBPs (Theorem~\ref{t:IntroUnivCSBP} above, restated precisely as Theorem~\ref{t:UniversalCSBP}).

%

  \part{Time and size discrete coalescence and branching.}\label{p:disc}

  \section{Coagulation equations and processes with multiple mergers.}\label{s:DCoag}

The Smoluchowski coagulation equation~\cite{Smoluchowski16,Smoluchowski17}  
is a mean-field rate-equation model of coalescence that may be used to describe 
a system of clusters 
that merge as time evolves to form larger clusters.
{The type of objects that comprise the clusters varies widely 
in applications---examples include smog particles, animals, and dark matter.}
Smoluchowski's equation governs the time evolution of the \emph{cluster-size distribution},
under certain assumptions which usually include the assumption that only 
binary mergers are taken into account. 
 Our aim in this section is to describe a natural and elementary Markov process that directly relates to a time-discrete Smoluchowski equation generalized to account for simultaneous mergers of any number of clusters. 

     The coalescence processes that we describe bear a close
 relation to Galton-Watson processes that will be delineated in the following section.
As indicated in the introduction, the coalescence process we describe is
a time-homogeneous version of a class of processes introduced independently
by Grosjean and Huillet in \cite{GrosjeanHuillet16}. 
The paper \cite{GrosjeanHuillet16} develops more extensively a correspondence between
coalescence and branching processes in terms of genealogies and ancestral trees.
Our use of the coalescence process in this paper is quite different, 
simply to indicate a clear relation between branching and the Smoluchowski
equation, as preparation for studying scaling limits.


%
%
  \subsection{A discrete-time process modeling coalescence.}\label{s:Cprocess}
  Let $\hat \pi$ be a probability measure on $\nplus\defeq \N\cup\set{0}$.
  We consider a countably infinite collection of clusters, each with nonnegative integer size, which undergo coalescence at discrete time steps according to the following {rules}.
    \begin{enumerate} 
   \item\label{i:C1}
   At each time step, 
{the existing clusters are collected into disjoint groups (which may be empty), and
the clusters in each group merge to form the new collection of clusters.
Empty groups create clusters of zero size.}
    \item\label{i:C2}
      The probability that a merger involves exactly $k$ clusters is $\hat \pi(k)$.  We refer to $\hat \pi$ as the \emph{merger distribution}.    

    \item\label{i:C3}
      The sizes of each of the $k$ clusters participating in a simultaneous $k$-merger are independent and identically distributed.
  \end{enumerate}
 An essential feature of this process is that the distribution of cluster sizes after a simultaneous $k$-merger is the same as the distribution of the sum of $k$ independent copies of 
  the cluster sizes before the merger.

An elementary construction of a coagulation process $C=(C_n)_{n\in\nplus}$ that has the above features proceeds as follows.
For each time $n \in \nplus$, $C_n$ is to be a random function from $\N$ to $\nplus$,
whose values $C_n(j)$ represent the size of a cluster in the ensemble.
At the initial time $n=0$, the random variables $C_0(j)$, ${j \in \N}$, may in
general be taken as independent and identically distributed; however, for the sake
of comparison with Galton-Watson processes, here we will always assume
\begin{equation*} 
  C_0(j) = 1\qquad\text{for all $j\in\N$} \,.
\end{equation*}

Given $C_n$ we define $C_{n+1}$ as follows.
Choose a random sequence $(M_{k,n})_{k\in\N_0}$ independent of $C_m$ for $0\leq m\leq n$, 
such that $M_{0,n} = 0$ and the increments 
\[
M_{k+1,n} - M_{k,n}\in \nplus
\]
are independent and identically distributed with law $\hat \pi$.
Define 
  \begin{equation}\label{e:Cdef}
    C_{n+1}(j) = \sum_{i = 1+ M_{j-1,n}}^{M_{j,n}} C_n(i) \,,
  \end{equation}
  with the convention that the sum is $0$ if $ M_{j,n}=M_{j-1,n}$ (this corresponds to the creation of a new cluster of size zero).
  The quantity $M_{j+1,n} - M_{j,n}$ represents the number of clusters that simultaneously merge  and combine their sizes to form a new cluster. 

Note that $C$ is a Markov process.  Moreover, for each $n \geq 0$, the random variables ${C_n(i)}$, ${i \in \N}$, are independent and identically distributed.
Thus~\eqref{e:Cdef} guarantees that for any $j \in \N$, the random variable $C_{n+1}(j)$ is the sum of $N$ independent and identically distributed copies of $C_{n}(j)$, where $N$ is itself a random variable with distribution $\hat \pi$ and independent of $C_n$.
This shows that the process $C$ meets conditions~\eqref{i:C1}--\eqref{i:C3} above.

We remark, however, that while the sequence $(C_n(1), C_n(2), \dots)$ represents the sizes of all clusters in the system, the individual coordinate functions $n\mapsto C_n(j)$ do not track the time evolution of the size of a particular cluster and need not form a Markov process.
  
\begin{remark*}
In case $\hat\pi(0)=0$, the partial sums $(\sum_{j=1}^k C_n(j))_{k\in\N}$
correspond to the marks of a \emph{renewal process} with integer increments,
for each $n\geq0$.
The coagulation process $C$ corresponds to 
a discrete-time type of \emph{thinning} of these processes---marks disappear according to the rules that govern the process $C$. 
It was pointed out by Aldous in \cite{Aldous99} that independent thinning of renewal processes yields a classical Smoluchowski coagulation equation in continuous time.
We find the condition $\hat\pi(0)>0$ necessary, however, in order to produce the correspondence with critical Galton-Watson processes to appear in Section~\ref{s:DCandGW} below.
\end{remark*}

  \subsection{Evolution of the cluster-size distribution.}

We let $\nu_n$ denote the distribution of cluster sizes at step $n$
in the coagulation process above. 
  That is,  $\nu_n(j)$ denotes the chance that any single cluster has size $j$ after $n$ time steps.
  Because the variables $\set{C_n(i)}_{i \in \N}$ are all identically distributed, we  have
  \begin{equation}\label{e:PnHatDef}
      \nu_n(j) = \P\set{ C_n(i) = j} 
    \quad\text{for all } i \in \N \,.
  \end{equation}

  Our first observation is that the $n$-step size distribution $\nu_n$ determines $\nu_{n+1}$ in a manner that naturally captures~(\ref{i:C1})--(\ref{i:C3}).

  \begin{proposition}\label{p:evol}
    For any $n \geq 0$ and $j \in \N$ we have
    \begin{equation}\label{e:PnHatEvol}
      \nu_{n+1}(j) = \sum_{k \geq 0} \hat \pi(k) \nu_n^{*k}(j) \,.
    \end{equation}
  \end{proposition}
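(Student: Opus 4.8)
The plan is to read the identity~\eqref{e:PnHatEvol} off directly from the explicit construction~\eqref{e:Cdef}, using the law of total probability. Fix $n \geq 0$ and $j \in \N$, and set $N \defeq M_{j,n} - M_{j-1,n}$. By the defining properties of the sequence $(M_{k,n})_{k \in \N_0}$, the increment $N$ has law $\hat\pi$ and the whole sequence $(M_{k,n})_k$ is independent of $C_m$ for $0 \le m \le n$; in particular $N$ is independent of the family $\set{C_n(i)}_{i \in \N}$, which by construction is i.i.d.\ with common law $\nu_n$ (recall~\eqref{e:PnHatDef}).

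The key step is to identify the conditional law of $C_{n+1}(j)$ given $N$. I would condition on the event $\set{M_{j-1,n} = a,\ M_{j,n} = b}$ for fixed integers $0 \le a \le b$ with $b - a = k$. On this event~\eqref{e:Cdef} reads $C_{n+1}(j) = \sum_{i = a+1}^{b} C_n(i)$, a sum of exactly $k$ of the i.i.d.\ variables $C_n(i)$; since these are independent of $(M_{k,n})_k$, the conditional law of this sum is $\nu_n^{*k}$, with the convention $\nu_n^{*0} = \delta_0$ — which is precisely the ``empty sum equals $0$'' convention built into~\eqref{e:Cdef}. Summing over all $(a,b)$ with $b-a=k$ then shows that, conditionally on $N = k$, the variable $C_{n+1}(j)$ has law $\nu_n^{*k}$. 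Applying the law of total probability over the values of $N$ gives
\[
  \nu_{n+1}(j) = \P\set{C_{n+1}(j) = j} = \sum_{k \ge 0} \P\set{N = k}\, \P\biggl\{ \sum_{i=1}^{k} C_n(i) = j \biggr\} = \sum_{k \ge 0} \hat\pi(k)\, \nu_n^{*k}(j),
\]
with no convergence issue since all terms are nonnegative.

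I expect the only genuine subtlety to be the independence bookkeeping in the middle step: one must be careful that \emph{inside} the random-length sum the summands remain i.i.d.\ with law $\nu_n$ and stay independent of the (random) number $N$ of summands. This is exactly what the hypothesis that $(M_{k,n})_k$ is independent of $C_0,\dots,C_n$ delivers, so conditioning on the realized values of $M_{j-1,n}$ and $M_{j,n}$ reduces the claim to the elementary fact that a sum of $k$ independent $\nu_n$-distributed variables has distribution $\nu_n^{*k}$.
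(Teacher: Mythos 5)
Your proposal is correct and follows the same route as the paper: the paper's (one-line) proof likewise derives~\eqref{e:PnHatEvol} directly from~\eqref{e:Cdef}, the i.i.d.\ property of the $C_n(i)$, and the fact that the increments $M_{j,n}-M_{j-1,n}$ are $\hat\pi$-distributed and independent of $C_n$; you have simply written out the conditioning and the law of total probability that the paper leaves implicit.
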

    Here $\nu_n^{*k}$ denotes the $k^\text{th}$ convolution power of $\nu_n$ and is given by
    \begin{equation}\label{e:PnHatStarK}
      \nu_n^{*k}(j)
	= \sum_{i_1, \dots, i_k \geq 0}
	    \delta^j( i_1 + \cdots + i_k)
	    \nu_n(i_1) \cdots \nu_n(i_k) \,,
    \end{equation}
    where $\delta^j$ denotes a Kronecker delta function:%
    \begin{equation*}
      \delta^j(k) =
      \begin{cases}
	1, & j = k \,,\\
	0, & j \neq k \,.
      \end{cases}
    \end{equation*}
By convention, we define $\nu_n^{*0}(j) = \delta^0(j)$.
  \begin{proof}[Proof of Proposition~\ref{p:evol}]
    The formula follows directly from~\eqref{e:Cdef}, independence of the $C_n(i)$ ($i \in \N$), and the fact that $M_{k+1,n} - M_{k,n}$ has distribution $\hat \pi$.
  \end{proof}

  \begin{remark*}
    In light of the assumption $C_0\equiv1$, the size distribution after one time step is exactly the merger distribution (i.e.\ $\nu_1 = \hat \pi$).
    This follows immediately from~\eqref{e:PnHatEvol}.
  \end{remark*}

  \subsection{The discrete Smoluchowski equation.}

  The size distribution of the process~$C$ naturally relates to a time-discrete Smoluchowski equation with multiple coalescence, as we now explain.
  The key idea is to express the dynamics in terms of clusters of non-zero size.
  For convenience, we will focus on case when the merger distribution is critical, 
  according to the following terminology.
  \begin{definition}
      We say the merger distribution $\hat \pi$ is \emph{critical} if
    \begin{equation}\label{e:Criticality}
      \sum_{k \geq 0} k \hat \pi(k) = 1 \,.
    \end{equation}
    This means the expected number of clusters involved in a simultaneous merger is $1$.
  \end{definition}


  \begin{proposition}\label{p:PnHat}
    If the merger distribution $\hat \pi$ is critical, then the size distribution~$\nu_n$ satisfies 
    \begin{align}\label{e:PnHatSmol}
      \nu_{n+1}(j) - \nu_n(j)
	&= \sum_{k \geq 2} \hat R_k( \rho_n ) \hat I_k( \nu_n, j ), \qquad j>0 \,,
    \intertext{where}
    \label{e:RhoN}
      \rho_n
      &\defeq \P\set{ C_n(1) > 0 }
	= \sum_{j>0} \nu_n(j)\,,
    \\
      \label{e:RkHat}
      \hat R_k(\rho_n) &\defeq
	\sum_{l \geq k} \hat \pi(l) \binom{l}{k} \rho_n^k (1 - \rho_n)^{l-k}\,,
    \\
    \label{e:IkHat}
      \hat I_k( \nu_n, j )
	&\defeq \sum_{i_1, \dots, i_k > 0}
	    \brak[\Big]{
	      \delta^j \paren[\Big]{ \sum_{l = 1}^k i_l }
	      - \sum_{l = 1}^k \delta^j \paren{ i_l }
	    }
	    \frac{\nu_n(i_1)}{\rho_n}
	    \cdots
	    \frac{\nu_n(i_k)}{\rho_n} \,.
    \end{align}
  \end{proposition}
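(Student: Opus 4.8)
The plan is to reduce the identity to elementary manipulations of the convolution formula~\eqref{e:PnHatEvol}, the key device being to split off the atom of $\nu_n$ at the origin. Write $\rho_n$ as in~\eqref{e:RhoN} and, assuming $\rho_n>0$ (the case $\rho_n=0$ being degenerate, as then $\nu_n=\delta^0$), set $\mu_n(i)\defeq\nu_n(i)/\rho_n$ for $i>0$ and $\mu_n(0)\defeq0$, so that $\mu_n$ is a probability measure on $\N$ and $\nu_n(i)=(1-\rho_n)\delta^0(i)+\rho_n\mu_n(i)$. Substituting this decomposition into each of the $k$ factors in~\eqref{e:PnHatStarK} and collecting terms according to the number $m$ of indices $i_l$ assigned a nonzero value gives, for every $j$,
\[
  \nu_n^{*k}(j)=\sum_{m=0}^{k}\binom{k}{m}(1-\rho_n)^{k-m}\rho_n^m\,\mu_n^{*m}(j),
\]
and for $j>0$ the $m=0$ term vanishes. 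Inserting this into~\eqref{e:PnHatEvol} and interchanging the (nonnegative, hence by Tonelli freely rearrangeable) double sum over $k$ and $m$ yields $\nu_{n+1}(j)=\sum_{m\ge1}\hat R_m(\rho_n)\,\mu_n^{*m}(j)$ for $j>0$, with $\hat R_m$ exactly the coefficient in~\eqref{e:RkHat}.

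Next I would identify $\hat I_k$ in these terms. Reading off~\eqref{e:IkHat}: the term $\delta^j(\sum_l i_l)$ integrates against $\prod_l(\nu_n(i_l)/\rho_n)$ over $i_1,\dots,i_k>0$ to give $\mu_n^{*k}(j)$, while by symmetry of the summand and $\sum_{i>0}\mu_n(i)=1$ the term $\sum_l\delta^j(i_l)$ integrates to $k\mu_n(j)$; hence $\hat I_k(\nu_n,j)=\mu_n^{*k}(j)-k\mu_n(j)$. Now I would assemble the right-hand side of~\eqref{e:PnHatSmol}: using the previous displays,
\[
  \sum_{k\ge2}\hat R_k(\rho_n)\hat I_k(\nu_n,j)
  =\sum_{k\ge2}\hat R_k(\rho_n)\mu_n^{*k}(j)-\mu_n(j)\sum_{k\ge2}k\hat R_k(\rho_n)
  =\nu_{n+1}(j)-\mu_n(j)\sum_{k\ge1}k\hat R_k(\rho_n),
\]
the $k=1$ contributions $\hat R_1(\rho_n)\mu_n^{*1}(j)$ and $\hat R_1(\rho_n)\mu_n(j)$ having cancelled. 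Finally I would evaluate $\sum_{k\ge1}k\hat R_k(\rho_n)$: interchanging the order of summation in~\eqref{e:RkHat} and recognizing $\sum_{k=1}^{l}k\binom{l}{k}\rho_n^k(1-\rho_n)^{l-k}$ as the mean $l\rho_n$ of a $\mathrm{Binomial}(l,\rho_n)$ variable gives $\sum_{k\ge1}k\hat R_k(\rho_n)=\rho_n\sum_{l\ge1}l\hat\pi(l)$, which equals $\rho_n$ precisely by the criticality hypothesis~\eqref{e:Criticality}. Since $\rho_n\mu_n(j)=\nu_n(j)$ for $j>0$, this proves~\eqref{e:PnHatSmol}.

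I expect the only genuine point is this last identity $\sum_{k\ge1}k\hat R_k(\rho_n)=\rho_n$, which is exactly where criticality enters; everything else is bookkeeping, together with the routine observation that every rearrangement of series used involves only nonnegative terms (or series dominated by the convergent $\sum_{m\ge1}\hat R_m(\rho_n)\le1$ and $\sum_{k\ge1}k\hat R_k(\rho_n)=\rho_n<\infty$), so no convergence subtleties arise.
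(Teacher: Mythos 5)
Your proof is correct, and its skeleton matches the paper's: both arguments hinge on the identity $\nu_{n+1}(j)=\sum_{k\geq 1}\hat R_k(\rho_n)\,(\nu_n/\rho_n)^{*k}(j)$ for $j>0$, dispose of the loss term using criticality, and let the $k=1$ contributions cancel. Where you differ is in how the two key displays are justified. The paper obtains \eqref{e:PnHatNplus1} by a probabilistic conditioning argument ($\nu_n(j)/\rho_n$ is the size distribution conditioned on being nonzero, and $\hat R_k(\rho_n)$ is the chance that exactly $k$ nonzero clusters merge), and obtains \eqref{e:PnHatN} by double-counting the expected number of size-$j$ clusters involved in a merger; you instead derive both facts purely algebraically from the already-proved evolution equation \eqref{e:PnHatEvol}, via the splitting $\nu_n=(1-\rho_n)\delta^0+\rho_n\mu_n$, the binomial expansion of $\nu_n^{*k}$, and the binomial-mean identity $\sum_{k=1}^{l}k\binom{l}{k}\rho_n^k(1-\rho_n)^{l-k}=l\rho_n$ — the latter being exactly the content of \eqref{e:PnHatN} once one divides out $\mu_n(j)$. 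What your route buys is a self-contained calculation with explicit control of all rearrangements (everything nonnegative, $\sum_k\hat R_k\le 1$, $\sum_k k\hat R_k=\rho_n$), at the cost of losing the transparent coagulation interpretation that the paper's shorter heuristic derivation provides. One cosmetic remark: under criticality one has $\rho_n>0$ for every $n$ automatically, since $\nu_{n+1}(0)=G(\nu_n(0))<1$ whenever $\nu_n(0)<1$ and criticality rules out $\hat\pi(0)=1$; so the degenerate case $\rho_n=0$ that you set aside never occurs (and \eqref{e:IkHat} presupposes $\rho_n>0$ in any event).
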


  \begin{remark*}
    Equations~\eqref{e:PnHatSmol}--\eqref{e:IkHat} are a discrete version of the Smoluchowski equations (see Section~\ref{s:CSBPintro}, below) and can be interpreted as follows.
    Observe that at time~$n$, the chance that any single cluster has positive size is exactly~$\rho_n$.
    Consequently the chance that any simultaneous merger involves exactly~$k$ clusters of non-zero size is given by $\hat R_k(\rho_n)$.
    Now in the event of a simultaneous merger of $k$ non-zero sized clusters, the chance that a cluster of size $j$ is created is exactly the positive term in~\eqref{e:IkHat}.
    On the other hand, a cluster of size $j$ can itself be involved in this simultaneous merger, leading to the creation of a larger cluster and the destruction of the cluster of size~$j$.
    The chance that this happens is the negative term in~\eqref{e:IkHat}.
    Consequently, $\hat I_k( \nu_n, j)$ is the expected change in frequency of clusters of size $j$, given that a simultaneous merger of $k$ non-zero sized clusters occurred.
   Assuming independence and summing yields~\eqref{e:PnHatSmol}.
  \end{remark*}

  \begin{proof}[Proof of Proposition~\ref{p:PnHat}]
    Equation~\eqref{e:PnHatSmol} follows when we restrict the size dynamics to clusters of non-zero size.
    Indeed, $\nu_n(j) / \rho_n$ is exactly the chance that a cluster has size $j$, conditioned on having non-zero size.
    Since $\hat R_k$ is the chance that exactly~$k$ such clusters merge, we must have
    \begin{align}
      \nonumber
      \nu_{n+1}(j)
	&= \sum_{k \geq 1}
	    \hat R_k( \rho_n)
            \paren[\Big]{ \frac{\nu_n}{\rho_n} }^{*k}(j)
      \\
      \label{e:PnHatNplus1}
	&= \sum_{k \geq 1}
	    \hat R_k( \rho_n)
	    \sum_{i_1, \dots, i_k > 0}
	      \delta^j(i_1 + \cdots +i_k)
		\frac{\nu_n(i_1)}{\rho_n} \cdots \frac{\nu_n(i_k)}{\rho_n} \,.
    \end{align}

Next we compute the expected number of clusters of size $j > 0$ involved in a given merger.
On one hand, this should be exactly the expected number of clusters involved in a merger, times $\nu_n(j)$.
On the other, this can be computed by counting the number of clusters of size $j$ in every cluster merger involving at least one cluster with non-zero size.
    This gives
    \begin{multline}\label{e:PnHatN}
       \paren[\Big]{ \sum_{k \geq 0} k \hat \pi(k) }\nu_n(j)
      \\
	= \sum_{k \geq 1}
	    \hat R_k(\rho_n)
	    \sum_{i_1, \dots, i_k > 0}
	      \paren[\big]{\delta^j(i_1) + \cdots + \delta^j(i_k)}
	      \frac{\nu_n(i_1)}{\rho_n} \cdots \frac{\nu_n(i_k)}{\rho_n} \,.
    \end{multline}
    Using the criticality assumption~\eqref{e:Criticality}, subtracting~\eqref{e:PnHatN} from~\eqref{e:PnHatNplus1} and observing that the~$k = 1$ term cancels, we obtain~\eqref{e:PnHatSmol}.
  \end{proof}

  \begin{remark*}
    Without the criticality assumption~\eqref{e:Criticality}, the above shows that~\eqref{e:PnHatSmol} should be replaced with
    \begin{equation}\label{e:PnHatSmolSubC}
      \nu_{n+1}(j) - \nu_n(j)
	= (\Xi - 1) \nu_n(j)
	  + \sum_{k \geq 2} \hat R_k( \rho_n ) \hat I_k( \nu_n, j ) \,, \qquad j>0 \,,
    \end{equation}
    if
    \begin{equation}\label{d:Xi}
      \Xi \defeq \sum_{k \geq 0} k \hat \pi(k)\,,
    \end{equation}
    the expected number of clusters involved in a simultaneous merger, is finite.
  \end{remark*}

  \section{Galton-Watson processes and coagulation dynamics.}\label{s:DCandGW}

  The purpose of this section is to establish a direct and elementary connection between critical Galton-Watson processes and the coalescence processes described in Section~\ref{s:DCoag}.

  \subsection{A brief introduction to Galton-Watson processes}

  The Galton-Watson process was introduced to model the
  gradual extinction of Victorian aristocratic family names, despite an increase in the general population.
  The original model supposes that family names are passed down only through male heirs, and that each heir reproduces independently and identically, and is replaced by his sons in the subsequent generation.

  To fix notation, let $\hat \pi$ denote the family-size distribution, meaning $\hat\pi(j)$ is the chance that each heir has exactly $j$ sons, and let $X_n$ denote the number of male descendants of a single individual after $n$ generations.
  Presuming the descendants reproduce independently, the process $X=\set{X_n}$ is a time-homogeneous Markov process taking values in $\nplus$,
  with $X_0=1$.  Further, the $n$-step transition probabilities
  \begin{equation*}
    P_n(i, j) \defeq \P\set[\big]{ X_n = j \given X_0 = i }
  \end{equation*}
   satisfy the convolution property
  \begin{equation*}
    P_n(i_1 + i_2, j) = \sum_{k = 0}^j P_n( i_1, j-k ) P_n(i_2, k ) \,.
  \end{equation*}
    The standard construction of a Galton-Watson process is to choose an array $\set{ \xi_{i,j} }$ of independent, identically distributed random variables with distribution $\hat \pi$, and define
  \begin{equation}\label{e:Xevol}
    X_{n+1} = \sum_{k = 1}^{X_n} \xi_{k,n+1} \,.
  \end{equation}
  We refer the reader to~\cite{AthreyaNey04} for a more complete introduction.

\begin{remark*}
The formula \eqref{e:Cdef} appears dual to \eqref{e:Xevol} in a curious way:
The number of terms in the sum in \eqref{e:Cdef} is distributed 
according to $\hat\pi$, while it is the individual terms in \eqref{e:Xevol}
that are so distributed. 
The individual terms in \eqref{e:Cdef} are generated by the accumulation
formula, while it is the number of terms in \eqref{e:Xevol} that is so
generated.  
\end{remark*}

%
%
  \subsection{Discrete coagulation dynamics of the descendant distribution.}
  
Next we establish a direct, explicit and elementary connection between the Galton-Watson process $X$ and the coagulation process $C$.
Namely, we show that $X_n$, the number of descendants of a single individual after $n$ generations, has the same distribution as the cluster sizes produced by the process $C$ after $n$ steps.
More extensive connections between genealogies and ancestral trees of these processes 
(and related time-inhomogenous processes) have recently been established in \cite{GrosjeanHuillet16}.

  \begin{proposition}\label{p:PnEvol}
Let $X$ be a Galton-Watson process, and let
 \begin{equation}\label{e:PnPiHat}
      \hat\nu_n(j) \defeq  \P\set{X_n = j} = P_n(1, j) \,.
    \end{equation}
denote the distribution of $X_n$.
Suppose $C$ is a coagulation process as in Section~\ref{s:Cprocess} having merger distribution $\hat\pi$ equal to the family-size distribution of $X$.
Then for each $n\geq 0$, $X_n$ has the same distribution as $C_n(i)$, $i\in\N$. 
That is, 
\[
\hat\nu_n = \nu_n
\]
for all $n \geq 0$,
where $\nu_n$ is defined in~\eqref{e:PnHatDef}.
Consequently, if $X$ is critical, then $\hat\nu_n$ satisfies the discrete Smoluchowski equations~\eqref{e:PnHatSmol}--\eqref{e:IkHat}.
%
  \end{proposition}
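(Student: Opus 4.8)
The plan is to show by induction on $n$ that $\hat\nu_n = \nu_n$, and then invoke Proposition~\ref{p:PnHat} for the final assertion. The base case $n=0$ is immediate: $X_0 = 1$ almost surely by construction of the Galton-Watson process, and $C_0(i) = 1$ for all $i\in\N$ by our standing assumption, so both $\hat\nu_0$ and $\nu_0$ equal the Dirac mass $\delta^1$. For the inductive step, I would compare the two recursions that govern the one-dimensional distributions. On the coagulation side, Proposition~\ref{p:evol} gives $\nu_{n+1}(j) = \sum_{k\ge 0}\hat\pi(k)\,\nu_n^{*k}(j)$; this expresses $\nu_{n+1}$ as a $\hat\pi$-mixture of convolution powers of $\nu_n$, reflecting the fact that $C_{n+1}(j)$ is a sum of $N$ i.i.d.\ copies of $C_n(i)$ where $N\sim\hat\pi$ (as noted in the discussion following~\eqref{e:Cdef}).

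The heart of the argument is to derive the \emph{same} recursion for $\hat\nu_n$ from~\eqref{e:Xevol}. Here I would use the standard decomposition of the Galton-Watson process by its first generation: conditioning on $X_1 = k$, the population at generation $n+1$ is the sum of $k$ independent copies of the population of a Galton-Watson process at generation $n$, each started from a single individual. In distributional terms this says exactly that $\hat\nu_{n+1}(j) = \sum_{k\ge 0}\P\{X_1 = k\}\,\hat\nu_n^{*k}(j) = \sum_{k\ge 0}\hat\pi(k)\,\hat\nu_n^{*k}(j)$, using $\P\{X_1 = k\} = \hat\pi(k)$ (which itself follows from~\eqref{e:Xevol} with $X_0=1$). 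Alternatively, and perhaps more cleanly, one can phrase this via generating functions: if $f$ is the probability generating function of $\hat\pi$ and $f_n$ that of $\hat\nu_n$, then the Markov/branching property gives $f_{n+1} = f_n\circ f$, equivalently $f_{n+1} = f\circ f_n$, and the latter identity is precisely the generating-function form of $\hat\nu_{n+1} = \sum_k \hat\pi(k)\,\hat\nu_n^{*k}$. Either way, $\hat\nu_n$ and $\nu_n$ satisfy the identical first-order recursion with the identical initial datum, so by induction $\hat\nu_n = \nu_n$ for all $n\ge 0$.

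The main obstacle, such as it is, is purely bookkeeping rather than conceptual: one must be slightly careful that the branching decomposition is applied in the ``add a generation at the bottom'' direction that matches the structure of~\eqref{e:Cdef} (where generation $n+1$ is built from generation $n$ by i.i.d.\ resampling governed by $\hat\pi$), rather than the ``add a generation at the top'' direction; both are valid for Galton-Watson processes because $f_{n+1} = f\circ f_n = f_n\circ f$, but only the former makes the match with $C$ transparent. Once $\hat\nu_n = \nu_n$ is established, the final sentence is immediate: if $X$ is critical then its family-size distribution $\hat\pi$ is a critical merger distribution in the sense of~\eqref{e:Criticality}, so Proposition~\ref{p:PnHat} applies to $\nu_n = \hat\nu_n$, giving the discrete Smoluchowski equations~\eqref{e:PnHatSmol}--\eqref{e:IkHat} for $\hat\nu_n$.
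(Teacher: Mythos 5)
Your proposal is correct and follows essentially the same route as the paper: the paper likewise derives $\hat\nu_{n+1}(j)=\sum_{k\ge0}P_1(1,k)P_n(k,j)=\sum_{k\ge0}\hat\pi(k)\,\hat\nu_n^{*k}(j)$ via the first-generation (Markov) decomposition, matches this with the recursion~\eqref{e:PnHatEvol} for $\nu_n$, and concludes by induction from $\hat\nu_0=\nu_0$, with the final assertion following from Proposition~\ref{p:PnHat}. Your remark about choosing the ``first-generation'' rather than ``last-generation'' decomposition is exactly the distinction the paper draws between its two displayed characterizations of $\hat\nu_{n+1}$.
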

  
  Recall a Galton-Watson process $\set{X_n}$ is said to be \emph{critical} if the expected number of descendants of a single individual is $1$.
  Explicitly, this means that~$\hat \pi$ satisfies~\eqref{e:Criticality},
  and it is well known that such processes become extinct almost surely.
  
\begin{proof}[Proof of Proposition~\ref{p:PnEvol}]
According to the branching formula \eqref{e:Xevol}
and the fact that if $X_n=i$ then the distribution of the sum is an $i$-fold
convolution of $\hat\pi$, naturally we have
\begin{equation}
\hat\nu_{n+1}(j) = \sum_{i=0}^\infty \hat\nu_n(i) \hat\pi^{*i}(j) 
= \sum_{i=0}^\infty P_n(1,i)P_1(i,j)\,.
\end{equation}
However, one also has another characterization, due to the Markov property.
Namely,
    \begin{equation}
      \hat\nu_{n+1}(j)
	= \sum_{k \geq 0} P_1(1, k) P_n( k, j )
	= \sum_{k \geq 0} \hat \pi(k) \hat\nu_n^{*k}(j) \,,
    \end{equation}
This equation has exactly the same form as~\eqref{e:PnHatEvol}.
Because $\hat\nu_0=\nu_0$, we conclude the proof using induction.
\end{proof}

In light of Proposition~\ref{p:PnEvol}, we identify $\hat\nu_n$ with $\nu_n$ for the remainder of this paper.
While the proof of Proposition~\ref{p:PnEvol} is a simple algebraic calculation, a natural interpretation can be obtained through the work of Kingman~\cite{Kingman82,Kingman82a}.
To see this, fix $N$ large, $n \leq N$ and divide the population at generation $N$ into clans that have a common living ancestor at generation $N - n$.
The clans here play the same role as the coalescing clusters in Section~\ref{s:DCoag}.
Indeed, as $n$ increases we look further back in the ancestry for a common living ancestor, leading to the merger of clans.
Finally, one can directly check that for a given $n$ the distribution of clan sizes is exactly $\hat \nu_n$, and hence~\eqref{e:PnHatSmol}--\eqref{e:IkHat} is expected.

\section{Bernstein transform of the discrete evolution equations}\label{s:BernsteinEvol}

The beautiful, classical theory of Galton-Watson processes is normally developed
in terms of the generating function for the family-size distribution $\hat\pi$, given by
\[
G(z) = \sum_{j=0}^\infty \hat\pi(j)z^j \,.
\]
The generating function of 
the $n^\text{th}$-generation descendant distribution $(\nu_n)$ 
is then given by the $n^\text{th}$ functional iterate of $G$---As is well-known and is easy to derive from \eqref{e:PnHatEvol}, the function
\begin{equation}
G_n(z) = \sum_{j=0}^\infty \nu_n(j)z^j
\end{equation}
satisfies $G_0(z)=z$ and 
\begin{equation}\label{e:Giterate}
G_{n+1}(z) = G(G_n(z)) \,, \quad n\geq0 \,.
\end{equation}
In order to simplify the study of continuum limits and compare with CSBPs, 
however, we find it convenient to recast the formulas of the theory 
using a representation more closely related to Laplace exponents. 

  \begin{definition}
    Given a Galton-Watson process $X$, we define its \emph{Bernstein transform} by
    \begin{equation}\label{e:BtDef}
      \hat \varphi_n(q)
	= \E \paren[\big]{ 1 - e^{-q X_n} }
	= \sum_{j \geq 1} \nu_n(j) (1 - e^{-qj}) \,.
    \end{equation}
  \end{definition}
Of course, this is closely related to the Laplace transform, and also may be expressed
in terms of generating functions, as
\begin{equation}\label{e:VarphiG}
\hat\varphi_n(q) = 1-G_n(e^{-q}) \,.
\end{equation}

The function $\hat\varphi_n$ is a \emph{Bernstein function}~
\cite{SchillingSongEA10}---a nonnegative function
whose derivative is a Laplace transform.
The class of Bernstein functions has a number of convenient 
properties---e.g., it is closed under composition and taking pointwise limits---and 
such functions have proved valuable 
in previous work on coagulation dynamics (see for example~\cite{MenonPego04,MenonPego08,LaurencotRoessel15}).
The first objective of this section is to obtain a convenient expression (equation~\eqref{e:HatVarphi}, below) for the evolution of the Bernstein transform of a Galton-Watson process.

 \begin{proposition}\label{p:BtEvol}
If $\hat \varphi$ is the Bernstein transform of a Galton-Watson process $X$,
 then 
    \begin{equation}\label{e:HatVarphi}
      \boxed{\hat \varphi_{n+1}(q) - \hat \varphi_n(q)
        = - \hat \Psi \paren{ \hat \varphi_n(q) }}
	\qquad\text{    for all $q\geq0$}\,,
    \end{equation}
where 
    \begin{equation}\label{defPsiHat}
\hat\Psi(s) \defeq \sum_{j=0}^\infty(1-s)^j\hat\pi(j) -1 + s  \ =\  G(1-s)-1+s \,,
\end{equation}
in terms of the family-size distribution $\hat\pi$    and its generating function $G$.
  \end{proposition}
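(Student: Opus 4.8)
The plan is to reduce the identity to the classical functional-iteration formula \eqref{e:Giterate} for generating functions by means of the dictionary \eqref{e:VarphiG} relating the Bernstein transform to the generating function; no induction is needed, since both \eqref{e:VarphiG} and \eqref{e:Giterate} are already available for every $n\ge0$. Before doing the computation I would record the elementary bound $\hat\varphi_n(q)\in[0,1]$ for all $q\ge0$ and $n\ge0$: since $(\nu_n(j))_{j\ge0}$ is a probability measure on $\nplus$ and $0\le 1-e^{-qj}\le1$, the series \eqref{e:BtDef} has sum in $[0,1-\nu_n(0)]\subseteq[0,1]$. Consequently $1-\hat\varphi_n(q)=\nu_n(0)+\sum_{j\ge1}\nu_n(j)e^{-qj}\in[0,1]$ lies in the interval on which $G(z)=\sum_{j\ge0}\hat\pi(j)z^j$ converges, so every evaluation of $G$ below is legitimate.

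With this in hand, apply \eqref{e:VarphiG} at level $n+1$ and then \eqref{e:Giterate}:
\[
  \hat\varphi_{n+1}(q)=1-G_{n+1}(e^{-q})=1-G\bigl(G_n(e^{-q})\bigr)=1-G\bigl(1-\hat\varphi_n(q)\bigr),
\]
where the last equality uses \eqref{e:VarphiG} once more in the form $G_n(e^{-q})=1-\hat\varphi_n(q)$. Subtracting $\hat\varphi_n(q)$ from both sides and regrouping,
\[
  \hat\varphi_{n+1}(q)-\hat\varphi_n(q)=-\bigl[\,G(1-\hat\varphi_n(q))-1+\hat\varphi_n(q)\,\bigr]=-\hat\Psi\bigl(\hat\varphi_n(q)\bigr),
\]
which is exactly \eqref{e:HatVarphi} with $\hat\Psi$ as in \eqref{defPsiHat}; the two displayed forms of $\hat\Psi$ there agree because $G(1-s)=\sum_{j\ge0}\hat\pi(j)(1-s)^j$ by the definition of $G$.

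An alternative derivation, which makes the branching meaning transparent and bypasses \eqref{e:VarphiG}--\eqref{e:Giterate}, is purely probabilistic: using the first-generation decomposition $X_{n+1}\overset{d}{=}\sum_{i=1}^{X_1}\tilde X_n^{(i)}$ with the $\tilde X_n^{(i)}$ i.i.d.\ copies of $X_n$ independent of $X_1$, conditioning on $X_1$ and using independence gives $\E e^{-qX_{n+1}}=\E\bigl[(\E e^{-qX_n})^{X_1}\bigr]=G(1-\hat\varphi_n(q))$, i.e.\ $1-\hat\varphi_{n+1}(q)=G(1-\hat\varphi_n(q))$, after which one proceeds as above. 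Either way the heart of the matter is a two-line computation, and I do not expect a genuine obstacle; the only point requiring any care is the (routine) verification that $\hat\varphi_n(q)$ stays in $[0,1]$, so that $G$ is always evaluated within its radius of convergence.
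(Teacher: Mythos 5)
Your proposal is correct and its main computation is exactly the paper's proof: apply \eqref{e:VarphiG} at level $n+1$, use the iteration \eqref{e:Giterate}, substitute $G_n(e^{-q})=1-\hat\varphi_n(q)$, and recognize $-\hat\Psi$ from \eqref{defPsiHat}. The added check that $\hat\varphi_n(q)\in[0,1]$ and the alternative first-generation-decomposition argument are harmless supplements, but the route is essentially identical to the paper's.
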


\begin{proof}
By \eqref{e:VarphiG} and \eqref{e:Giterate} we have 
\begin{equation}
\hat\varphi_{n+1}(q)-\hat\varphi_n(q) = 
1-G(G_n(e^{-q}))-\hat\varphi_n(q)= -\hat\Psi(\hat\varphi_n(q))\,.
\qedhere
\end{equation}
\end{proof}
  \begin{definition}\label{d:dbran}
    We define the function $\hat \Psi$ in equation~\eqref{defPsiHat} to be the \emph{branching mechanism} of the Galton-Watson process $X$. 
  \end{definition}
  While the generating function has proved extremely useful in many contexts, the notion of branching mechanism as defined above is better suited for our purposes.
As will be seen, it is strongly analogous to the branching mechanism of a CSBP,
it governs convergence of the process in the continuum limit (Proposition~\ref{p:BrMechConv}, below), and it also provides the discrete analog of the Poissonian structure of the rate constants arising in~\cite{BertoinLeGall06} (see~\eqref{e:PsiKRk} below, also revisited later in Section~\ref{s:CSBPintro}).

    We remark that the branching mechanism $\hat \Psi$ is only guaranteed to be defined for $s \in [0, 2]$. It is convex for $s \in [0, 1]$.
In terms of expected family size, it can also be written in the following form. 

\begin{lemma} 
    If the expected family size $\Xi=\sum_{k\geq 0} k\hat\pi(k)$ is finite, then
    the branching mechanism of the Galton-Watson process $X$ satisfies
    \begin{equation}\label{e:PsiHat}
      \hat \Psi(s) = \sum_{j = 2}^\infty
	  \paren[\big]{ \paren{1 - s}^j - 1 + js }
	  \hat \pi(j) + (1-\Xi)s\,.
    \end{equation}
\end{lemma}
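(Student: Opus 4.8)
The plan is to start from the defining formula~\eqref{defPsiHat} for $\hat\Psi$ and simply reorganize the series, peeling off the $j=0$ and $j=1$ contributions and inserting the ``linearized'' correction $-1+js$ into the tail. Concretely, I would write
\begin{equation*}
  \sum_{j=0}^\infty (1-s)^j \hat\pi(j)
  = \hat\pi(0) + (1-s)\hat\pi(1) + \sum_{j\geq 2}(1-s)^j \hat\pi(j)\,,
\end{equation*}
and then inside the tail sum use the algebraic identity $(1-s)^j = \bigl[(1-s)^j - 1 + js\bigr] + (1 - js)$, so that
\begin{equation*}
  \sum_{j\geq 2}(1-s)^j \hat\pi(j)
  = \sum_{j\geq 2}\bigl[(1-s)^j - 1 + js\bigr]\hat\pi(j)
    + \sum_{j\geq 2}\hat\pi(j) - s\sum_{j\geq 2} j\,\hat\pi(j)\,.
\end{equation*}

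Next I would evaluate the two elementary tail sums in terms of $\hat\pi(0),\hat\pi(1)$ and $\Xi$: namely $\sum_{j\geq 2}\hat\pi(j) = 1 - \hat\pi(0) - \hat\pi(1)$ and $\sum_{j\geq 2} j\,\hat\pi(j) = \Xi - \hat\pi(1)$ (the $j=0$ term contributes nothing to the latter). Substituting these back into~\eqref{defPsiHat} and collecting terms, the constants $\hat\pi(0)+\hat\pi(1)$ cancel against $1 - \hat\pi(0) - \hat\pi(1) - 1$, and the terms linear in $s$ combine as $-s\hat\pi(1) - s(\Xi - \hat\pi(1)) + s = (1-\Xi)s$; the surviving sum is exactly $\sum_{j\geq 2}\bigl((1-s)^j - 1 + js\bigr)\hat\pi(j)$, which is~\eqref{e:PsiHat}.

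The only point requiring care — and the main (mild) obstacle — is the legitimacy of splitting and rearranging the infinite series. This is where the hypothesis $\Xi = \sum_{k\geq 0} k\hat\pi(k) < \infty$ enters: on the relevant range $s \in [0,2]$ one has $\abs{1-s}\leq 1$, so $\sum_j (1-s)^j\hat\pi(j)$ converges absolutely, while $\sum_j \hat\pi(j) = 1$ and $\sum_j j s\,\hat\pi(j) = \Xi s < \infty$ converge as well; hence each of the three pieces in the decomposition above converges absolutely and the rearrangement is valid. (As a sanity check one may also verify directly that each summand $(1-s)^j - 1 + js$ is bounded by $2 + js$, so the tail series $\sum_{j\geq 2}\bigl((1-s)^j - 1 + js\bigr)\hat\pi(j)$ converges absolutely under the same hypothesis.) With that justification in place the computation is purely algebraic and the proof is complete.
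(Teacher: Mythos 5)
Your proof is correct and is exactly the elementary rearrangement of the defining series~\eqref{defPsiHat} that the paper leaves implicit (the lemma is stated there without proof), including the right use of $\Xi<\infty$ and $\abs{1-s}\leq 1$ on $[0,2]$ to justify splitting the absolutely convergent series. Nothing further is needed.
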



  We now compute the rate constants $\hat R_k$ (defined in~\eqref{e:RkHat}) in terms of the branching mechanism.
  \begin{lemma}\label{l:RkHat}
    Let $X$ be a Galton-Watson process with branching mechanism $\hat \Psi$.
    Then for all $k \geq 2$ we have
    \begin{equation}\label{e:PsiKRk}
      \hat R_k(\rho_n) = \frac{(-\rho_n)^k}{k!} \hat \Psi^{(k)}_{\phantom{0}}(\rho_n) \,,
    \end{equation}
    where $\rho_n$ is defined by~\eqref{e:RhoN}
    and $\hat R_k$ is defined by~\eqref{e:RkHat}.
  \end{lemma}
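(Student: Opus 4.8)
The plan is to compute the $k$-th derivative of the branching mechanism directly from its defining series and match it, term by term, against the definition of $\hat R_k$. Start from the first expression for $\hat\Psi$ in \eqref{defPsiHat}, namely $\hat\Psi(s) = \sum_{j\ge0}(1-s)^j\hat\pi(j) - 1 + s$. The affine tail $-1+s$ contributes nothing to $\hat\Psi^{(k)}$ once $k\ge2$, so for $k\ge2$ we have $\hat\Psi^{(k)}(s) = \sum_{j\ge0}\hat\pi(j)\,\frac{d^k}{ds^k}(1-s)^j$. Each monomial differentiates as $\frac{d^k}{ds^k}(1-s)^j = (-1)^k\,\frac{j!}{(j-k)!}(1-s)^{j-k}$, which vanishes for $j<k$; hence $\hat\Psi^{(k)}(s) = (-1)^k\sum_{j\ge k}\hat\pi(j)\,\frac{j!}{(j-k)!}(1-s)^{j-k}$.

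Next I would substitute $s=\rho_n$ and tidy up. Multiplying by $(-\rho_n)^k/k!$ gives
\begin{equation*}
\frac{(-\rho_n)^k}{k!}\hat\Psi^{(k)}(\rho_n)
= \frac{\rho_n^k}{k!}\sum_{j\ge k}\hat\pi(j)\,\frac{j!}{(j-k)!}(1-\rho_n)^{j-k}
= \sum_{j\ge k}\hat\pi(j)\binom{j}{k}\rho_n^k(1-\rho_n)^{j-k},
\end{equation*}
using $\frac{1}{k!}\frac{j!}{(j-k)!} = \binom{j}{k}$. The right-hand side is exactly $\hat R_k(\rho_n)$ as defined in \eqref{e:RkHat} (with the summation index $l$ there playing the role of $j$ here). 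This establishes \eqref{e:PsiKRk}. A remark worth inserting: the identity is just the statement that $\hat R_k(\rho)$ is the $k$-th coefficient in the binomial/Taylor expansion of $G(1-\rho+\rho) = 1$ organized by the ``success probability'' $\rho$; equivalently, $\hat R_k$ is the probability of thinning a $\hat\pi$-distributed number of clusters down to exactly $k$ survivors when each survives independently with probability $\rho$, which is manifestly a mixed binomial.

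The only point requiring a word of care is the interchange of differentiation and summation: since $\hat\pi$ is a probability measure and the series defining $G(z)=\sum\hat\pi(j)z^j$ has radius of convergence at least $1$, all term-by-term differentiations are justified on the open interval, and for $\rho_n\in(0,1)$ — which is the relevant range, as $\rho_n=\sum_{j>0}\nu_n(j)\le1$ and one checks $\rho_n<1$ whenever $\hat\pi(0)>0$ and $n\ge1$ — everything converges absolutely; the boundary value $\rho_n=1$ can be handled by monotone convergence or simply by continuity of both sides (each is a power series in $1-\rho_n$ with nonnegative coefficients). I do not expect any genuine obstacle here; the lemma is a bookkeeping identity, and the main thing is to present the differentiation cleanly so that the combinatorial factor $\binom{l}{k}$ emerges transparently.
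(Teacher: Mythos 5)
Your proof is correct and follows essentially the same route as the paper: termwise differentiation of \eqref{defPsiHat} to obtain $\frac{(-1)^k}{k!}\hat\Psi^{(k)}(s)=\sum_{j\ge k}\hat\pi(j)\binom{j}{k}(1-s)^{j-k}$, followed by the substitution $s=\rho_n$. The extra remarks on justifying term-by-term differentiation and on the boundary value $\rho_n=1$ are fine but not needed beyond what the paper records.
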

  \begin{proof}
    Termwise differentiation of~\eqref{defPsiHat} gives
    \begin{equation}\label{e:DkPsiHat}
      \frac{(-1)^k}{k!} \hat \Psi^{(k)}_{\phantom{0}}(s)
      =
	  \sum_{j \geq k}
	    \hat \pi(j) \binom{j}{k} (1 - s)^{j-k}\,,\qquad k\geq2 \,,
    \end{equation}
    for $s \in (0,2)$. Because
$\rho_{n} \in (0, 1)$ for all $n$, we may 
    substitute $s = \rho_n$ in~\eqref{e:DkPsiHat} and this gives~\eqref{e:PsiKRk} for $k \geq 2$.
  \end{proof}

  \part{Convergence criteria governing scaling limits of critical Galton-Watson processes.}\label{p:scaling}
  \section{CSBPs and the time continuous Smoluchowski equation.}\label{s:CSBPintro}

  In subsequent sections we will consider scaling limits of Galton-Watson processes,
  focussing on the critical case.
  The limiting processes obtained will be a class of CSBPs, and we use this section to summarize relevant properties of CSBPs.
  We also take this opportunity to indicate similarities between the CSBPs and the discrete notions introduced in Section~\ref{s:BernsteinEvol} that foreshadow results in Section~\ref{s:Scaling}.
  Finally, we describe Bertoin and Le Gall's result~\cite{BertoinLeGall06} relating CSBPs to the time-continuous Smoluchowski equation and compare it to the discrete version introduced in Section~\ref{s:DCoag}.
  \medskip

A CSBP consists of a two-parameter random process $(x, t) \mapsto Z_t(x)\in [0,\infty)$ for $t\geq 0$ and  $x>0$.
For fixed $x$, the process $t\mapsto Z_t(x)$ is a Markov process with initial value $Z_0(x) = x$.
For fixed $t$, the process $x \mapsto Z_t(x)$ is an increasing process with independent and stationary increments.
The right-continuous version of this is a L\'evy process with increasing sample paths.
In particular, the process enjoys the branching property that for all $t \geq 0$, the distribution of $Z_t(x+y)$ is the same as the distribution of the sum of independent copies of $Z_t(x)$ and $Z_t(y)$.

The structure of the process $Z$ has a precise characterization via the Lamperti transform 
(see~\cite{Lamperti67,CaballeroLambertEA09} or \cite[Chap.~12]{Kyprianou14}).
That is, $t \mapsto Z_t(x)$ can be expressed as a subordinated Markov process with parent process $x + \bar X_t$ 
where $\bar X_t$ is a L\'evy process starting from~$0$ with no negative jumps (i.e., $\bar X_t$ is either spectrally positive or a subordinator).
More specifically, $Z_t(x) = x + \bar X_{\Theta(x, t)}$ where the process $t \mapsto \Theta(x, t)$ has non-decreasing sample paths and formally solves $\partial_t \Theta = x + \bar X_\Theta$ with $\Theta(x, 0) = 0$.%
\footnote{%
A derivation of an analog of this for Galton-Watson processes is in Remark~\ref{r:DiscLamperti}, below.
}
In this context, the Laplace exponent of $\bar X_t$, denoted $\Psi$, is called the {\it branching mechanism} for $Z_t(x)$ and has L\'evy-Khintchine representation
\begin{align}\label{e:bm1}
\Psi(q) =
  \frac12 a_0 q^2 -a q -b+ \int_{(0,\infty)} \paren[\big]{ e^{-qx}-1+qx \, \one_{\set{x<1}} } \,d\pi(x) \,,
\end{align}
where $a_0, b \geq 0$, $a \in \R$ and $\pi$ is a positive measure on $(0, \infty)$ satisfying the finiteness condition
\begin{equation*}
  \int_{(0,\infty)} (1 \wedge x^2) \pi (dx) < \infty \,.
\end{equation*}
It is well known~\cites{Grey74,Kyprianou14} that a CSBP remains finite almost surely (or is \emph{conservative})
if and only if
\begin{equation*}
  \int_{(0, 1)} \frac{1}{\abs{\Psi(q)}} \, dq = \infty \,.
\end{equation*}
Clearly, the branching mechanism of a conservative CSBP must have $b = 0$.

A CSBP is subcritical, critical or supercritical if we have $\Psi'(0^+) > 0$, $\Psi'(0^+) = 0$ or $\Psi'(0^+) < 0$ respectively.
Thus for a finite (sub)critical CSBP (by which we mean a critical or subcritical CSBP that remains finite almost surely) we must have $b = 0$ and
\begin{equation}\label{e:PsiPrime1}
  0 \leq \Psi'(0^+) = -a - \int_{[1, \infty)} x \, d\pi(x)\,.
\end{equation}
  Consequently, the branching mechanism of a finite (sub)critical CSBP takes the form 
  \begin{equation}\label{e:Psi}
      \Psi(q)
      = \frac12 a_0 q^2
	+ a_\infty q
	+\int_{(0,\infty)} \frac{ \paren{e^{-qx}-1+qx}}{x} \, d\mu(x)\,,
  \end{equation}
  for some $a_0, a_\infty \geq 0$, together with the finiteness condition 
  \begin{equation*}
    \int_{(0,\infty)} (x\wedge1)\,d\mu(x)<\infty \,.
  \end{equation*}
  Here the measures $\mu$ and $\pi$ are related by $x \, d\pi = d\mu$.
  The reason we introduce the measure~$\mu$ is because with this notation $\Psi'$ is the Bernstein function associated with the L\'evy triple~$(a_0, a_\infty, \mu)$.
  Note $a_\infty = \Psi'(0^+)$ and can be expressed in terms of $a$ using~\eqref{e:PsiPrime1}.

  We remark that \eqref{e:Psi} closely parallels the form of the discrete branching mechanism~$\hat \Psi$ in Definition~\ref{d:dbran}.
  Indeed, for critical Galton-Watson processes $\hat \Psi$ takes the form~\eqref{e:PsiHat},
  which is obtained from~\eqref{e:Psi} by dropping the quadratic term $\frac12 a_0 q^2$,
  setting $a_\infty = 0$ by the criticality condition,
  and using the approximation $e^{-s} \approx 1 -s$.
  We will see later (Proposition~\ref{p:BrMechConv}) that the quadratic term (along with a linear term) reappears in the continuum limit.

  Returning to the CSBP~$Z$, the nature of the Lamperti transform forces the relation
\begin{equation}\label{e:laplace1}
\E \paren[\big]{ e^{-q Z_t(x)} } = e^{-x\vp_t(q)} \,,
\end{equation}
where the spatial Laplace exponent $\vp$ solves the backward equation
  \begin{equation}\label{e:PhiEvol1}
\boxed{   \partial_t \varphi_t(q) = - \Psi(\varphi_t(q))\,,}
    \qquad
    \text{with initial data }
    \quad
    \varphi_0(q) = q \,.
  \end{equation}
This is exactly the continuum analog of the discrete equation~\eqref{e:HatVarphi}, and we will provide a natural and elementary proof of it in Proposition~\ref{p:BTConv}, below.

As the Laplace exponent of a subordinator, $\varphi$ has the L\'evy-Khintchine representation 
\begin{equation}\label{e:khintchine}
    \vp_t(q) = b_t q + \int_{(0,\infty)} (1-e^{-qx}) \, d\bnu_t(x) \,, \quad q\geq 0 \,,
\end{equation}
where $b_t\geq 0$ and $\bnu_t$ is a positive measure satisfying the finiteness condition
\begin{equation*}
  \int_{(0,\infty)} (1\wedge x) \, d\bnu_t(x)<\infty \,.
\end{equation*}
The quantities $b_t$ and $\bnu_t$ are respectively the \emph{drift coefficient} and the \emph{L\'evy jump measure} of the CSBP $Z$.

A striking result of Bertoin and Le Gall~\cite{BertoinLeGall06} shows that the L\'evy jump measure of a \emph{critical} CSBP which becomes extinct almost surely satisfies  a generalized type of \emph{Smoluchowski coagulation equation}.
\ifdraft\sidenote{Sun 08/07 GI: The test functions used in~\cite{BertoinLeGall06} are in $C([0, \infty))$ with $f(0) = 0$. In~\cite{IyerLegerEA15} we use $C([0, \infty])$, and we started writing that here. Should we reconcile? \\ 10/2 RP: \cite{IyerLegerEA15} is correct.\\
  10/5 GI: Agreed.}\fi
Explicitly, Proposition~3 in \cite{BertoinLeGall06} shows that
  \begin{equation}\label{e:Smol}
      \partial_t \ip{f}{\bnu_t} = \sum_{k \geq 2} R_k I_k(\bnu_t,f)\,,
    \qquad\text{for all } f \in C([0, \infty])\,,
  \end{equation}
  where 
  \begin{gather}\label{e:IkDef}
    I_k(\bnu,f) \defeq \int_{(0,\infty)^k} 
      \paren[\Big]{f(x_1+\ldots+x_k)-\sum_{i=1}^k f(x_i) }
      \prod_{i=1}^k \frac{d\bnu(x_i)}{\ip{1}{\bnu}} \,,
    \\
    \label{e:RkPsi}
      R_k \defeq \frac{ (-\rho_t)^k \Psi^{(k)}(\rho_t) }{k!} \,,
      \qquad
       \rho_t \defeq \ip{1}{\bnu_t} \,.
  \end{gather}
  Here
  \begin{equation*}
    \ip{f}{\bnu_t} \defeq \int_{(0,\infty)} f(x)\,d \bnu_t(x),
  \end{equation*}
  is the duality pairing.
  \medskip

Equation~\eqref{e:Smol} has a natural interpretation as a coagulation model introduced by Smoluchowski~\cites{Smoluchowski16,Smoluchowski17} generalized to account for multiple coalescence.
To understand this, we interpret $\set{\bnu_t \st t \geq 0}$ as a family of positive measures on $\R^+=(0,\infty)$, representing the size distribution of clusters.
Namely, $\bnu_t(a, b)$ denotes the {expected} number of clusters at time $t$ that have size in the interval $(a, b)$.

Fix $k \geq 2$ and consider the change in the cluster size distribution due to the simultaneous merger of $k$ clusters.
We assume that the merging clusters are i.i.d.\ with distribution proportional to $\bnu_t$.
For $y \in (0, \infty)$, the merging of smaller clusters into a cluster of size $y$ will result in an increase in the density of clusters of size $y$.
This will happen at a rate proportional to
\begin{equation*}
  \smashoperator{\int_{(0, y)}}
    \frac{ d\bnu_t(x_1) }{\ip{1}{\bnu_t}}
  \smashoperator[r]{\int_{(0, y - x_1)}}
    \frac{ d\bnu_t(x_2) }{\ip{1}{\bnu_t}}
  ~\cdots~
  \smashoperator{\int_{(0, y - \sum_{i=1}^{k-2} x_i )} }
    ~~\frac{ d\bnu_t(x_{k-1}) }{\ip{1}{\bnu_t}}
  \cdot
  \frac{d\bnu_t( y - x_1 - \cdots - x_{k-1} )}{\ip{1}{\bnu_t}}\,.
\end{equation*}
On the other hand, the clusters of size $y$ also combine with larger clusters resulting in a decrease in the density of clusters of size~$y$.
This will happen at a rate proportional to
\begin{equation*}
  k \frac{d\bnu_t(y)}{\ip{1}{\bnu_t}} \,.
\end{equation*}
Thus, for any test function $f \in C([0, \infty])$, the rate at which the simultaneous merger of $k$ clusters affects the moment $\ip{f}{\bnu_t}$ is proportional to the difference of the above two terms integrated against $f$.
Changing variables we see that this is exactly $I_k(f , \bnu_t)$ and hence the rate at which the simultaneous merger of $k$ clusters affects $\ip{f}{\bnu_t}$ is proportional to $I_k(f, \bnu_t)$.
Summing over $k$ and multiplying by proportionality constants explains how~\eqref{e:Smol} models coalescence.

In general the rate constants $R_k$ appearing in~\eqref{e:Smol} can be chosen arbitrarily.
In the context of CSBPs, the $R_k$'s have a special Poissonian structure given by~\eqref{e:RkPsi}.
One of the main motivations of our exposition is to provide a clear account of
the meaning of the measure $\bnu_t$ in this context, the precise way it arises in the continuum limit, and how it comes to be governed by coagulation dynamics with the indicated rates.

Precisely, we  will show that for a finite (sub)critical CSBP, the L\'evy jump measures $\bnu_t$ arise as the scaling limit of the $n^\text{th}$ generation descendant distributions of rescaled critical Galton-Watson processes~$\nu_n$.
To briefly explain the main idea, recall Proposition~\ref{p:PnEvol} shows that $\nu_n$ satisfies~\eqref{e:PnHatSmol}.
This is of course simply a discrete version of~\eqref{e:Smol}.
  Indeed, given a sequence $\set{f(j)}_{j \in \N}$, we multiply~\eqref{e:PnHatSmol} by $f(j)$ and sum over $j$ to obtain
  \begin{equation}\label{e:DiscSmolNu}
    \sum_{j > 0} f(j) (\nu_{n+1}(j) - \nu_n(j))
      = \sum_{k \geq 2} \hat R_k(\rho_n) \hat I_k(\nu_n, f)\,,
  \end{equation}
  where 
  \begin{equation}\label{e:DiscIkNu}
    \hat I_k( \nu_n, f )
      \defeq \sum_{i_1, \dots, i_k > 0}
      \brak[\Big]{
	f\paren[\Big]{\sum_{l = 1}^k i_l} - \sum_{l = 1}^k f(i_l)
      }
      \prod_{l = 1}^k \frac{\nu_n(i_l)}{\rho_n}\,.
  \end{equation}
  Moreover, Lemma~\ref{l:RkHat} shows that the rate constants are obtained from the discrete branching mechanism $\hat \Psi$ (Definition~\ref{d:dbran}) in exactly the same manner as~\eqref{e:RkPsi}.

  Thus, after rescaling the Galton-Watson processes correctly, it is only natural to expect convergence of the rescaled branching mechanisms of the Galton-Watson process to the branching mechanism of the CSBP, and convergence of the rescaled descendant distributions $\nu_n$ to the L\'evy jump measure $\nu_t$.
  Moreover, the L\'evy jump measure $\nu_t$ should satisfy~\eqref{e:Smol} if the limit is critical, and~\eqref{e:Smol} with an additional damping term (analogous to~\eqref{e:PnHatSmolSubC}) if the limit is subcritical.
  We prove this in Proposition~\ref{p:BrMechConv} and Corollaries~\ref{clyLevyTripConv} and~\ref{c:CSBPSmol} below.

%
%
%
%

  \section{Scaling limits of critical Galton-Watson processes.}\label{s:Scaling}

In this section we study scaling limits of critical Galton-Watson processes using the discrete coagulation dynamics developed above.

We establish necessary and sufficient criteria for convergence of the discrete branching mechanisms (Proposition~\ref{p:BrMechConv}), convergence of the Bernstein transforms (Proposition~\ref{p:BTConv}) and of the rescaled Galton-Watson processes themselves (Proposition~\ref{p:GWConv}), in terms of a type of weak convergence of the reproduction laws alone.  
Moreover, we show (Corollaries~\ref{clyLevyTripConv} and \ref{c:CSBPSmol}) that the L\'evy jump measure of the limiting CSBP satisfies a generalized (damped) Smoluchowski equation.
The precise notion of convergence is naturally associated with continuity theorems for Bernstein transforms, which we develop in Appendix~\ref{s:CT} due to their independent interest. 

  \subsection{Rescaled time-discrete dynamics.}
  We begin by rescaling the coagulation model~\eqref{e:RhoN}--\eqref{e:IkHat} (where $\hat R_k$ is defined by~\eqref{e:RkHat}).
  Let   $h > 0$ be a grid size, and  $\tau > 0$ be a time step.
  We rescale the variables so that cluster sizes are integer multiples of $h$, and the merger of clusters happens on intervals of time $\tau$.
  Further, in order to facilitate passing to the limit as $h,\,\tau\to0$, 
  we associate measures supported on the grid $h \N$ to the rescaled size distributions.
  Explicitly, we define
  \begin{equation}\label{e:DefTiPi}
    \suph \nu_n = \frac{1}{h} \sum_{j \geq 1} {\nu_n}(j) \delta_{jh}\,.
        \qquad 
        \subht \pi = \frac{1}{\tau h} \sum_{j \geq 1} \hat \pi(j) \delta_{jh}\,.
  \end{equation}
  Here $\delta_x$ denotes the Dirac measure centered at $x$.

  In the context of Galton-Watson processes, the above corresponds to scaling population by a factor of~$h$ and reproducing at times which are integer multiples of~$\tau$.
  That is, the rescaled process $Y$ is given by
  \begin{equation*}
    Y_{n \tau}(j h) = h X_n(j) \,.
  \end{equation*}
  We will, however, postpone the discussion of rescaled Galton-Watson processes to Section~\ref{s:GW}, and instead study the rescaled size distributions first.

Associated with \eqref{e:DefTiPi} we denote the Bernstein transform of $\suph\nu_n$ by
    \begin{equation}\label{e:TiPhinDef}
      \suph \varphi_n(q) = \int_{\R^+} \paren{1 - e^{-qx}} \, d\suph \nu_n(x)
      = \frac1h \hat\varphi_n(hq) \,.
    \end{equation}
We assume $\hat\pi$ is critical, and define a rescaled branching mechanism
by  
  \begin{equation}\label{e:DefTiPsi}
    \subht \Psi(q) \defeq \int_{\R^+} \paren[\big]{ (1 - hq)^{x/h} - 1 + q x } \, d\subht \pi(x)
    = \frac{\hat \Psi(h q)}{\tau h} \,.
  \end{equation}
  Note that $\subht \Psi$ is only guaranteed to be defined on the interval $[0, 2/h]$.
  It is increasing and convex on $[0, 1/h]$, and
  for this reason we will subsequently ensure $0 \leq q \leq 1/h$ whenever we 
  use~$\subht \Psi(q)$.

%
%
%

The evolution equations and their Bernstein transforms now take the following form.
  \begin{lemma}
    Using the rescaled variables in~\eqref{e:DefTiPi}, 
    equation~\eqref{e:DiscSmolNu} becomes
    \begin{equation}
      \frac{\ip{f}{\suph \nu_{n+1}}	  - \ip{f}{\suph \nu_{n}} }{\tau}
      = \sum_{k \geq 2} \supht R_k(\suph\rho_n) I_k( \suph \nu_n, f )
    \end{equation}
    for any bounded $f\in C(\R_+)$.
    Here $I_k$ is defined by~\eqref{e:IkDef}, and
   \begin{equation}
    \supht R_k (\suph\rho_n)
      \defeq \frac{(-\suph \rho_n)^k}{k!} \subht \Psi^{(k)}(\suph \rho_n ) \,,
  \end{equation}
  where $\suph \rho_n = \ip{1}{\suph \nu_n}$ represents the rescaled total number at time $n \tau$.

Further, the Bernstein transform of $\suph \nu_n$ satsifies
    \begin{equation}\label{e:PhiDiff}
\boxed{      \frac{ \suph \varphi_{n+1}(q) - \suph \varphi_n(q) }{\tau}
      = - \subht \Psi( \suph \varphi_n(q) ) }
      \qquad\text{for all } n\geq0,\, q\geq0 \,.
    \end{equation}
  \end{lemma}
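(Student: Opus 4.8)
The plan is to obtain both displayed identities by a direct change of variables from the unscaled evolution equations already in hand: the summed discrete Smoluchowski equation~\eqref{e:DiscSmolNu} (valid for the unscaled $\nu_n$ by Propositions~\ref{p:PnHat} and~\ref{p:PnEvol}), the rate-constant formula $\hat R_k(\rho_n)=\tfrac{(-\rho_n)^k}{k!}\hat\Psi^{(k)}(\rho_n)$ of Lemma~\ref{l:RkHat}, and the Bernstein-transform evolution $\hat\varphi_{n+1}(q)-\hat\varphi_n(q)=-\hat\Psi(\hat\varphi_n(q))$ of Proposition~\ref{p:BtEvol}. The only further ingredient is the elementary ``scaling dictionary'' read off from \eqref{e:DefTiPi}, \eqref{e:TiPhinDef} and \eqref{e:DefTiPsi}: for every $n$ one has $\suph\rho_n=\ip{1}{\suph\nu_n}=\rho_n/h$, $\suph\varphi_n(q)=\tfrac1h\hat\varphi_n(hq)$, $\subht\Psi(q)=\tfrac{1}{\tau h}\hat\Psi(hq)$, and, differentiating the last relation $k$ times, $\subht\Psi^{(k)}(q)=\tfrac{h^{k-1}}{\tau}\hat\Psi^{(k)}(hq)$. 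Because $0\le\hat\varphi_n\le1$ and $0\le\rho_n\le1$, the arguments $\suph\varphi_n(q)$ and $\suph\rho_n$ lie in $[0,1/h]$, exactly the interval on which $\subht\Psi$ and its derivatives are defined and convex, so no domain issue arises below.

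For the first identity I would fix a bounded $f\in C(\R_+)$, set $g(j):=f(jh)$, and apply~\eqref{e:DiscSmolNu} to the test sequence $g$. The left-hand side becomes $\sum_{j>0}g(j)(\nu_{n+1}(j)-\nu_n(j))=h\big(\ip{f}{\suph\nu_{n+1}}-\ip{f}{\suph\nu_n}\big)$ directly from the definition of $\suph\nu_n$. On the right-hand side, $\suph\nu_n$ is the measure on $h\N$ carrying mass $\nu_n(j)/h$ at each point $jh$, with total mass $\suph\rho_n$, so each weight $\nu_n(i_l)/\rho_n$ equals $d\suph\nu_n(x_l)/\ip{1}{\suph\nu_n}$ at $x_l=i_lh$; hence the $k$-fold sum defining $\hat I_k(\nu_n,g)$ is precisely the $k$-fold integral $I_k(\suph\nu_n,f)$ of~\eqref{e:IkDef}. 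For the coefficients, putting $s=\rho_n=h\suph\rho_n$ in Lemma~\ref{l:RkHat} and using $\hat\Psi^{(k)}(h\,\cdot)=\tfrac{\tau}{h^{k-1}}\subht\Psi^{(k)}(\cdot)$ gives $\hat R_k(\rho_n)=\tfrac{(-h\suph\rho_n)^k}{k!}\,\tfrac{\tau}{h^{k-1}}\,\subht\Psi^{(k)}(\suph\rho_n)=\tau h\,\supht R_k(\suph\rho_n)$. Collecting these, \eqref{e:DiscSmolNu} reads $h\big(\ip{f}{\suph\nu_{n+1}}-\ip{f}{\suph\nu_n}\big)=\tau h\sum_{k\ge2}\supht R_k(\suph\rho_n)\,I_k(\suph\nu_n,f)$, and dividing by $h\tau$ yields the asserted equation.

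For the Bernstein-transform identity~\eqref{e:PhiDiff} I would evaluate Proposition~\ref{p:BtEvol} at $hq$ in place of $q$ and divide through by $h$. Using $\suph\varphi_n(q)=\tfrac1h\hat\varphi_n(hq)$, this reads $\suph\varphi_{n+1}(q)-\suph\varphi_n(q)=-\tfrac1h\hat\Psi\big(\hat\varphi_n(hq)\big)=-\tfrac1h\hat\Psi\big(h\suph\varphi_n(q)\big)$; then $\tfrac1h\hat\Psi(h\,\cdot)=\tau\,\subht\Psi(\cdot)$ turns this into $\suph\varphi_{n+1}(q)-\suph\varphi_n(q)=-\tau\,\subht\Psi(\suph\varphi_n(q))$, and dividing by $\tau$ produces~\eqref{e:PhiDiff}.

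These computations are essentially bookkeeping with powers of $h$ and $\tau$, so I expect no genuine obstacle; the one point worth a sentence is the legitimacy of interchanging the $k$- and $j$-summations (and the $k$-fold sums inside $\hat I_k$) in passing from~\eqref{e:PnHatSmol} to~\eqref{e:DiscSmolNu} and substituting $g(j)=f(jh)$. Here one uses that $(\nu_n(i)/\rho_n)_{i>0}$ is a probability law, whence $\lvert\hat I_k(\nu_n,g)\rvert\le(k+1)\lVert f\rVert_\infty$, together with $\sum_{k\ge0}\hat R_k(\rho_n)=1$ and, by criticality, $\sum_{k\ge0}k\hat R_k(\rho_n)=\rho_n$, so that $\sum_{k\ge2}(k+1)\hat R_k(\rho_n)\le1+\rho_n<\infty$ and Tonelli's theorem justifies every rearrangement. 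Finally, only boundedness of $f$ on $\R_+$ is used (not its behavior at $0$), since every sum and integral above ranges over strictly positive cluster sizes.
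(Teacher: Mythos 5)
Your proposal is correct and follows exactly the route the paper indicates: it is the "direct computation" from \eqref{e:DiscSmolNu}, Lemma~\ref{l:RkHat}, and \eqref{e:HatVarphi} using the scaling relations \eqref{e:DefTiPi}, \eqref{e:TiPhinDef}, \eqref{e:DefTiPsi}, with the bookkeeping (including the factor $\hat R_k(\rho_n)=\tau h\,\supht R_k(\suph\rho_n)$ and the domain remark) spelled out accurately.
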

  \begin{proof}
    The proof is a direct computation using~\eqref{e:DiscSmolNu}, Lemma~\ref{l:RkHat}, and~\eqref{e:HatVarphi}.
  \end{proof}


  \subsection{Convergence of critical branching mechanisms.}

The first step in studying continuum limits of $\suph \nu_n$ is to study convergence of the branching mechanisms.  We obtain necessary and sufficient conditions for such convergence,
in terms of a criterion that is closely tied to the continuity theorems relating Bernstein transforms and L\'evy triples, which we develop in the Appendix. 

For greater generality, we will study sequential limits where we also allow the measure $\hat \pi$ to vary. Let $(\hat\pi_k)$ be a sequence of probability measures on $\nplus$
which satisfy the criticality condition~\eqref{e:Criticality}, 
and let $(h_k)$, $(\tau_k)$ be positive sequences converging to $0$. 
We introduce (rescaled) discrete branching mechanisms 
as in \eqref{defPsiHat} and \eqref{e:DefTiPsi} 
with $\hat\pi_k$ replacing $\hat\pi$, by defining 
\[
\hat\Psi_k(q) \defeq  \sum_{j = 2}^\infty
	  \paren[\big]{ \paren{1 - q}^j - 1 + jq }
	  \hat \pi_k(j) \,, \qquad 
\scalek\Psi(q)\defeq\frac{\hat\Psi_k(h_kq)}{\tau_k h_k}\,.
\]

We next associate to each family size distribution 
$\hat\pi_k$  a (L\'evy) measure $\hat\mu_k$ on $\R^+$ given by
\begin{equation}\label{defMuHat}
d\hat\mu_k(x) \defeq \sum_{j\geq2}(j-1)\hat\pi_k(j)\,d\delta_{j}(x) \,.
\end{equation}
This measure is rescaled according to the relation
\begin{equation} \label{defMuht}
d\scalek\mu(x) 
\defeq \frac1{\tau_k} d\hat\mu_k\left(\frac x{h_k}\right) =
\frac1{\tau_k}\sum_{j\geq2} (j-1)\hat\pi_k(j)\, d\delta_{jh_k}(x)\,.
\end{equation}
The next definition singles out a particular sense of convergence of these measures that will be important throughout the rest of this paper.
This notion relates to convergence of L\'evy triples and is revisited in Appendix~\ref{s:CT} (see Remark~\ref{r:Lconv}).
\begin{definition}\label{d:Lconv}
Given some finite measure $\kappa$ on $[0,\infty]$, we say 
the sequence $(\scalek\mu)$ \emph{L\'evy-converges} to $\kappa$ provided
\begin{equation}\label{e:MuLim}
    (x\wedge 1)\,d\scalek\mu(x) \to d\kappa(x) 
\quad\text{weak-$\star$ on $[0,\infty]$.}
\end{equation}
\end{definition}
Recall a sequence of finite measures $(\kappa_k)$ converges to $\kappa$ weak-$\star$ on $[0, \infty]$ if for every test function $g \in C([0, \infty])$ we have $\ip{g}{\kappa_k} \to \ip{g}{\kappa}$.   
We require test functions to be continuous at $\infty$ in order to capture any atom at $\infty$.

\begin{proposition}\label{p:BrMechConv}
Given a sequence $(\hat\pi_k)$ satisfying the criticality condition \eqref{e:Criticality}, and positive sequences $(h_k)$, $(\tau_k)$
converging to zero, let $(\scalek\mu)$, $(\scalek\Psi)$ be as above.

\begin{asparaenum}[(i)]
  \item Suppose that $(\scalek\mu)$ L\'evy-converges to 
some finite measure $\kappa$ on $[0,\infty]$ as $k\to\infty$.
Then for each $q\in [0,\infty)$, as $k\to\infty$  we have
    \begin{equation}\label{e:PsiLim}
\scalek\Psi(q)\to	\Psi(q)
	\defeq \frac12 \alpha_0 q^2+\alpha_\infty q
	+\int_0^\infty \frac{e^{-qx}-1+qx}x \, d\mu(x)\,,
    \end{equation}
where $(\alpha_0,\alpha_\infty,\mu)$ is the L\'evy triple associated with $\kappa$ by the relation
\begin{equation}\label{e:KappaMu}
d\kappa(x) 
	= \alpha_0 \, d\delta_0 + \alpha_\infty \, d\delta_\infty+ (x\wedge1)\, d\mu(x)\,.
 \end{equation}
 Moreover, each derivative $\scalek\Psi^{(m)}$ converges to $\Psi^{(m)}$, 
 locally uniformly in $(0,\infty)$ for each $m\in\N$.  
 
 \item Conversely, suppose $\Psi(q)=\lim_{k\to\infty}\scalek\Psi(q)$ exists
 for each $q\in[0,\infty)$. Then $(\scalek\mu)$ L\'evy-converges to
 some finite measure $\kappa$ on $[0,\infty]$, and $\Psi$ is given by 
 \eqref{e:PsiLim}--\eqref{e:KappaMu}.
\end{asparaenum}
\end{proposition}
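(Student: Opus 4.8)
The plan is to recast $\scalek\Psi$ as an integral of $\scalek\mu$ against an explicit kernel which, after the reweighting by $x\wedge 1$ built into Definition~\ref{d:Lconv}, converges uniformly to the kernel that represents $\Psi$; the weak-$\star$ hypothesis in~\eqref{e:MuLim} then lets us pass to the limit. \textbf{Step 1 (integral representations).} Directly from the definitions of $\scalek\Psi$ and $\scalek\mu$ one checks, for $q\ge 0$ with $h_kq<1$, that
\begin{equation*}
  \scalek\Psi(q)=\int_{(0,\infty)}K_{h_k}(x,q)\,(x\wedge 1)\,d\scalek\mu(x),
  \qquad
  K_h(x,q)\defeq\frac{(1-hq)^{x/h}-1+xq}{(x-h)\,(x\wedge 1)},
\end{equation*}
which makes sense because $\scalek\mu$ is supported on $h_k\N\cap[2h_k,\infty)$, where $x-h_k\ge x/2>0$. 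Likewise, with the continuous extensions $K_0(0,q)\defeq\tfrac12 q^2$ and $K_0(\infty,q)\defeq q$ of
\begin{equation*}
  K_0(x,q)\defeq\frac{e^{-qx}-1+qx}{x\,(x\wedge 1)}\qquad(0<x<\infty),
\end{equation*}
the target~\eqref{e:PsiLim} is precisely $\Psi(q)=\int_{[0,\infty]}K_0(x,q)\,d\kappa(x)$: the atoms $\alpha_0\delta_0,\ \alpha_\infty\delta_\infty$ of $\kappa$ in~\eqref{e:KappaMu} contribute $\tfrac12\alpha_0q^2$ and $\alpha_\infty q$, while $K_0(x,q)(x\wedge 1)=(e^{-qx}-1+qx)/x$ yields the integral term.

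\textbf{Step 2 (uniform convergence of the kernels).} Fix $q>0$ and extend $K_{h_k}(\cdot,q)$ continuously (and arbitrarily) over $[0,2h_k]$, where $\scalek\mu$ vanishes. I claim $K_{h_k}(\cdot,q)\to K_0(\cdot,q)$ uniformly on $[0,\infty]$. This is the technical heart. Writing $(1-h_kq)^{x/h_k}=e^{-qx}\exp\!\big(\tfrac{x}{h_k}(\ln(1-h_kq)+h_kq)\big)$ and using $0\le e^{-y}-1+y\le\tfrac12 y^2$ and $|\ln(1-u)+u|\le u^2$ for $0\le u\le\tfrac12$, one bounds the supremum over the ranges $x\in[2h_k,Jh_k]$, $[Jh_k,1]$, $[1,R]$, and $[R,\infty]$, for suitable large constants $J$ and $R$. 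On the first range (finitely many atoms) and the third ($x$ bounded away from $0$ and $\infty$), ordinary Taylor expansion gives $O(h_k)$ bounds; on the last range both kernels approach $q$ since $(1-h_kq)^{x/h_k}\le e^{-qx}$, giving an $O(1/R)$ bound; and on the second range, where $x\wedge 1$ degenerates, one exploits the cancellation $e^{-qx}-1+qx\le\tfrac12 q^2x^2$ together with $x-h_k\ge x/2$ to obtain an $O(q^2/J)$ bound. The parameters $J,R$ are chosen first (large), then $k$ large; the bookkeeping is routine but delicate.

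\textbf{Step 3 (proof of (i)).} Taking $g\equiv 1$ in~\eqref{e:MuLim} shows $\sup_k(x\wedge 1)\scalek\mu([0,\infty])<\infty$; writing $\nu_k\defeq(x\wedge 1)\scalek\mu$, Step~2 and $K_0(\cdot,q)\in C([0,\infty])$ give
\begin{equation*}
  \big|\scalek\Psi(q)-\Psi(q)\big|\le\|K_{h_k}(\cdot,q)-K_0(\cdot,q)\|_\infty\,\nu_k([0,\infty])+\Big|\int K_0(\cdot,q)\,d(\nu_k-\kappa)\Big|\longrightarrow 0.
\end{equation*}
For the derivatives, differentiate the integral representation $m$ times in $q$ (legitimate for each fixed $k$ since $\scalek\Psi$ is real-analytic on $(0,2/h_k)$): the same type of estimates --- now using the exponential decay of $(1-h_kq)^{x/h_k}$ in $x$ to control $\partial_q^m K_{h_k}$ at large $x$ --- give $\partial_q^m K_{h_k}(\cdot,q)\to\partial_q^m K_0(\cdot,q)$ uniformly on $[0,\infty]$, locally uniformly in $q\in(0,\infty)$; since $\{\partial_q^m K_0(\cdot,q):q\in[a,b]\}$ is relatively compact in $C([0,\infty])$, the displayed estimate upgrades to local uniform convergence $\scalek\Psi^{(m)}\to\Psi^{(m)}$ on $(0,\infty)$.

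\textbf{Step 4 (proof of (ii)) and the main obstacle.} Fix $q_0>0$. Since $K_0(\cdot,q_0)$ is continuous and strictly positive on the compact space $[0,\infty]$, $c\defeq\min_{[0,\infty]}K_0(\cdot,q_0)>0$, so by Step~2, $K_{h_k}(x,q_0)\ge c/2$ for $x\ge 2h_k$ and $k$ large; hence $\nu_k([0,\infty])\le\tfrac2c\scalek\Psi(q_0)$, which is bounded in $k$ because $\scalek\Psi(q_0)$ converges. As $[0,\infty]$ is compact metric, every subsequence of $(\nu_k)$ has a weak-$\star$ convergent further subsequence, with finite limit $\kappa'$; along it, part~(i) gives $\Psi(q)=\int K_0(\cdot,q)\,d\kappa'$ for all $q$. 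The map $\kappa'\mapsto\big(q\mapsto\int K_0(x,q)\,d\kappa'(x)\big)$ is injective (from $\Psi$ one recovers $\alpha_0=\lim_{q\to\infty}2\Psi(q)/q^2$, then $\alpha_\infty=\Psi'(0^+)$, then $\mu$ by uniqueness of Laplace transforms, i.e.\ by the continuity theorem, Theorem~\ref{t:Cont}), so all subsequential limits of $(\nu_k)$ coincide; thus $\nu_k\to\kappa$ weak-$\star$, that is, $(\scalek\mu)$ L\'evy-converges to $\kappa$, and $\Psi$ has the form~\eqref{e:PsiLim}--\eqref{e:KappaMu} by part~(i). The main obstacle throughout is Step~2: establishing uniform convergence $K_{h_k}\to K_0$ over the whole of $[0,\infty]$, especially near $x=0$ (where $x\wedge 1$ degenerates and the discreteness parameter $j=x/h_k$ is of order one) and near $x=\infty$.
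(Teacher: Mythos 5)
Your proposal is correct, and its overall strategy is the one the paper uses: represent the rescaled branching mechanism as an integral of an explicit kernel against the reweighted measures $(x\wedge1)\,d\scalek\mu$, prove uniform convergence of the kernel on the support, pass to the limit by weak-$\star$ convergence, and, for the converse, get a uniform mass bound from a positive lower bound on the kernel at one point, extract weak-$\star$ subsequential limits, and identify them uniquely. The differences are in the sub-steps, and they are worth noting. The paper works at the level of $\Psi'$ rather than $\Psi$: its kernel $f_h$ is exactly your $\partial_q K_h$, its estimate \eqref{e:FkMinusF0} is the analogue of your Step 2, and $\scalek\Psi\to\Psi$ is then recovered by integrating in $q$, using $\scalek\Psi(0)=\Psi(0)=0$ and monotonicity. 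For the locally uniform convergence of all derivatives, the paper avoids your $m$-fold kernel estimates entirely: since $q\mapsto f_h(x)$ is analytic and uniformly bounded on compact subsets of the right half-plane, Montel's theorem plus Cauchy's integral formula upgrade pointwise convergence of $\scalek\Psi'$ to locally uniform convergence of every derivative; your real-variable estimates on $\partial_q^m K_{h}$ do work (thanks to the exponential decay in $x$ you indicate), but they are precisely the ``delicate bookkeeping'' the complex-analytic route bypasses. In part (ii), the paper's mass bound comes from the elementary inequality $f_h\ge f_0$ together with convexity ($\scalek\Psi'(q)\le\scalek\Psi(2q)/q$), so it does not need the full strength of your Step 2 there; and for uniqueness of the subsequential limit it invokes bijectivity of the Bernstein transform (Theorem~\ref{t:BernsteinRep}) applied to $\Psi'=\ip{f_0}{\kappa}$, whereas you recover $(\alpha_0,\alpha_\infty,\mu)$ by hand from the asymptotics of $\Psi$ — both are valid, though the uniqueness you need is Theorem~\ref{t:BernsteinRep} rather than Theorem~\ref{t:Cont}. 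Finally, the sup in your Step 3 should be taken over the support $[2h_k,\infty)$ of $\nu_k$ (as you effectively do), so the arbitrary extension of $K_{h_k}$ below $2h_k$ is harmless.
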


  \begin{remark*} 
    The L\'evy-convergence requirement~\eqref{e:MuLim} corresponds exactly to convergence of L\'evy triples in a natural topology associated to subordinators.
    Explicitly, criterion~\eqref{e:MuLim} is equivalent to convergence of the L\'evy triples $(0,0,\scalek\mu)$ to the L\'evy triple $(\alpha_0,\alpha_\infty,\mu)$, as described in Appendix~\ref{s:CT}.
  \end{remark*}

\begin{remark*}  
  Recall that the expression for $\Psi$ in \eqref{e:PsiLim} is the general form of a branching mechanism for a finite (sub)critical CSBP (see~\cite[Ch.~12]{Kyprianou14} or Section~\ref{s:CSBPintro}).
  We show in Section~\ref{s:Univ}, below, that every such branching mechanism does arise as a sequential limit from discrete branching mechanisms of \emph{critical} Galton-Watson processes.
  A heuristic explanation as to why critical branching mechanisms might yield a subcritical branching mechanism in the limit is discussed in Remark~\ref{r:subcLim}, below.
\end{remark*}

\begin{proof}[Proof of Proposition~\ref{p:BrMechConv}.]
We begin by proving (i).

\begin{asparaenum}[1.]
\item Define 
\begin{equation}\label{defKappak}
d\kappa_k = (x\wedge 1)\,d\scalek\mu\,.
\end{equation}
Then $\kappa_k\to\kappa$ weak-$\star$ on $[0, \infty]$ by \eqref{e:MuLim}.
Next fix $q\geq0$, and compute  
    \begin{align}
      \label{e:DPsi}
      \Psi'(q) &= \alpha_0 q +\alpha_\infty+ \int_0^\infty (1-e^{-q x}) \, d \mu(x)
	= \ip{f_0}{\kappa}\,,   
	     \\    \label{e:DPsik}
      \scalek \Psi'(q) &= \int_{[2h_k,\infty)}
	\frac{1-(1-h_k q)^{(x-h_k)/h_k}}{x-h_k}  {x}\,d\scalek \mu(x)
	= \ip{\subhk f}{\kappa_k} \,,
    \end{align}
    where
    \begin{align*}
       f_0(x)	= \frac{1-e^{-q x}}x
	\left( \frac x{x\wedge1} \right),      
      \quad
       f_h(x)	= { \frac{1-(1-h q)^{(x-h)/h}}{x-h} }
	  \paren[\Big]{ \frac{x}{x\wedge 1} }\,.
    \end{align*}
The second equality in~\eqref{e:DPsik} follows because $\kappa_k$ is supported on $[2h_k, \infty)$.

Note $ \Psi(0) = \subhk\Psi(0) = 0$, and $\Psi'$ and $\subhk\Psi'$ are positive and increasing.
    Thus, the desired conclusion in \eqref{e:PsiLim}  for fixed $q$ will follow, provided we show
    \begin{equation}\label{e:KappaKConv}
      \ip{\subhk f}{ \kappa_k} = \ip{\subhk f-f_0}{ \kappa_k}
      +\ip{f_0}{ \kappa_k}\to\ip{f_0}{\kappa}\,.
    \end{equation}
    Clearly $\ip{f_0}{\kappa_k}\to\ip{f_0}{\kappa}$ because $f_0\in C([0,\infty])$.
    We claim
    \begin{equation}\label{e:FkMinusFh}
      \lim_{h \to 0} \sup_{x > h} \ \abs[\big]{ f_h(x)-f_0(x) } = 0 \,.
    \end{equation}
    Because $\ip{1}{\kappa_k}\to\ip{1}{\kappa}$, this immediately implies~\eqref{e:KappaKConv}.

  \item
    To finish the proof of \eqref{e:PsiLim}, it only remains to prove~\eqref{e:FkMinusFh}.
    For this, we claim that, provided $\max(h, q h)<\frac12$,
    \begin{equation}\label{e:FkMinusF0}
      \abs{ f_h(x)-f_0(x) }
      \leq
	\begin{dcases}
	  5 e^{-q x}+ \frac{2 h}{x} & \text{for } x>2 \,,\\
	  2 (1 \varmax x) h q^2  & \text{for } x>h \,.
	\end{dcases}
    \end{equation}
    Indeed, this estimate in the case $x > 2$ follows immediately from the bounds
    \begin{gather*}
      0 < \frac{x}{x-h}-1
	= \frac{h}{x-h} <\frac{2h}x,
      \\
      \frac{x}{x-h} (1- q h)^{(x-h)/h}
	\leq 2\frac{ (e^{-q h})^{x/h}} {1- q h}
	\leq 4e^{-q x}\,.
    \end{gather*}
In the case $x > h$, observe that because $z\mapsto e^{z}$ is contractive for $z<0$,
we have
\begin{align*}
      \frac{ \abs[\big]{(1-q h)^{(x-h)/h} -e^{-(x-h)q}} }{x-h}
	&\leq \frac{1}{h} \abs{\ln(1-q h)+q h}
      = \frac{1}{h} \int_0^{q h} \frac z{1-z}\,dz \leq h q^2\,,
\end{align*}
because $qh<\frac12$. Moreover, because $z\mapsto (1-e^{-z})/z=\int_0^1 e^{-rz}\,dr$
is a decreasing contraction for $z>0$, 
\[
    0 < \frac{1-e^{-(x-h)q}}{x-h} - \frac{1-e^{-xq}}x 
    \leq \abs[\big]{ (x - h) q - xq } q
    = h q^2 \,.
\] 
By adding these last two bounds and using $x/(x\wedge1)=1\vee x$, we infer~\eqref{e:FkMinusF0}.

    Using the first estimate in~\eqref{e:FkMinusF0} for $x>h^{-1/2}$, and the
    second estimate in~\eqref{e:FkMinusF0} for $x \leq h^{-1/2}$, we obtain~\eqref{e:FkMinusFh}.
    This finishes the proof of \eqref{e:PsiLim}.


\item
  To prove the statement regarding local uniform convergence, 
let $\Omega$ be an open set with compact closure 
in the right half plane $\set{q\in\C\mid\re q>0}$, and note that  
$|1-hq|<1$ for all $q\in\Omega$, for sufficiently small $h>0$.
When $x=mh$ for an integer $m\geq2$, the function $q\mapsto f_h(x)$ is analytic 
and is clearly bounded on $\Omega$ uniformly for $h$ small and for $x\geq 1$.
A uniform bound holds for $x\leq 1$ as well due to the fact that 
in this case $ \partial_q f_h(x) = (1-hq)^{m-2}$,
whence $|\partial_q f_h(x)|\leq 1$.

Now, because of \eqref{e:DPsik}, the functions $(\scalek\Psi')_{k\geq N}$ are 
analytic and uniformly bounded on $\Omega$.
By Montel's theorem, this sequence converges uniformly on $\Omega$, 
and by Cauchy's integral formula, all derivatives converge locally uniformly.
This finishes the proof of part (i).
\end{asparaenum}

For part (ii), assume $\Psi(q)=\lim_{k\to\infty}\scalek\Psi(q)$ exists for each $q\in [0,\infty)$.
 First, considering some $q$ fixed,  note that for every $x>h$, 
 \[
 f_h(x) \geq \frac{1-e^{-q(x-h)}}{x-h} (1\vee x) \geq \frac{1-e^{-qx}}x(1\vee x) =f_0(x) \,.
 \]
Therefore by~\eqref{e:DPsik} we have that whenever $k$ is so large that $2q h_k <1$,
    \begin{equation}\label{e:KappaKBd}
(\inf f_0) \ip{1}{ \kappa_k}
      \leq \ip{\subhk f}{ \kappa_k} 
      = \scalek \Psi'(q)
      \leq \frac{\scalek \Psi(2q)}{q} \,.
    \end{equation}
The last inequality holds because $\scalek \Psi$ is convex and positive on $(q,2q)$.
    Because $\inf f_0>0$ and $(\scalek\Psi(2q))$ is bounded, 
    it follows that $\sup_k \ip{1}{ \kappa_k} < \infty$, and hence $\set{ \kappa_k}$ is weak-$\star$ pre-compact.

Thus, any subsequence of $(\kappa_k)$ has a further subsequence that converges weak-$\star$
on $[0,\infty]$. Let $\kappa$ denote any such limit. By the proof of part (i) above, 
we infer that for any $q>0$, as $k\to\infty$ along the appropriate subsequence,
\[
\scalek\Psi'(q) = \ip{\subhk f}{\kappa_k} \to \Phi(q) \defeq \ip{f_0}{\kappa} \,.
\]
By dominated convergence we deduce $\Psi(q) = \lim \int_0^q \scalek\Psi'(r)\,dr = \int_0^q\Phi(r)\,dr$,
whence we conclude $\Psi$ is $C^1$ and $\Phi=\Psi'$.  

Let $(\alpha_0,\alpha_\infty,\mu)$ be the L\'evy triple associated with $\kappa$ by \eqref{e:KappaMu}.
Then we see that \eqref{e:DPsi} holds, and $\Psi'$ is the Bernstein transform of this triple.
Because the Bernstein transform is bijective (see Theorem~\ref{t:BernsteinRep} of the Appendix),
it follows that $\Psi$ determines $\kappa$ uniquely. By consequence, the whole sequence 
$(\kappa_k)$ must converge, meaning that \eqref{e:MuLim} holds. This finishes the proof.
%
  \end{proof}

  \subsection{Convergence of Bernstein transforms}

Next we study the convergence of the rescaled size distributions $\nu^h_n$ for survivors,
by the simple expedient of studying convergence of their Bernstein transforms $\varphi^h_n$, regarding \eqref{e:PhiDiff} as a forward Euler difference approximation to the ODE \eqref{e:PhiEvol1}. 




\begin{proposition}\label{p:BTConv}
Make the same assumptions as in Proposition~\ref{p:BrMechConv}.

\begin{asparaenum}[(i)]
  \item
    Suppose that the L\'evy-convergence in \eqref{e:MuLim} holds
    for some finite measure $\kappa$ on $[0,\infty]$. 
    Then, as $k\to\infty$ and whenever $n\tau_k\to t$, 
    we have 
\begin{equation}\label{e:PhihkConv}
\suphk\varphi_n(q)\to\varphi(q,t)
\quad\text{ for every } q , t \in [0, \infty) \,,
\end{equation}
    where $\varphi$
is the unique solution of
    \begin{equation}\label{e:PhiEvol}
      \partial_t \varphi(q, t) = - \Psi ( \varphi(q, t))\,,
      \qquad
      \varphi(q, 0) = q \,,
    \end{equation}
with $\Psi$ given by \eqref{e:PsiLim}.
Moreover, $\varphi(\cdot, t)$ is Bernstein for each $t \geq 0$, and has the form
\begin{equation}\label{e:PhiBern1}
\varphi(q,t) = \beta_0(t) q +\beta_\infty+ \int_{(0,\infty)} (1-e^{-qx})\,d\nu_t(x)\,,
\end{equation}
where $\beta_\infty=0$, $\beta_0(t)\geq0$ and $\int_{(0,\infty)} (x\wedge1)\,d\nu_t(x)<\infty$. 

  \item
    Conversely, suppose \eqref{e:PhihkConv} holds, and $\varphi(q_0, t_0) > 0$ for some $q_0,t_0 > 0$.
 Then \eqref{e:MuLim} holds  for some finite measure $\kappa$ on $[0,\infty]$.
\end{asparaenum}
  \end{proposition}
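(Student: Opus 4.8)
I would treat the rescaled recursion \eqref{e:PhiDiff} as a forward-Euler scheme for the ODE \eqref{e:PhiEvol} driven by the varying vector field $\scalek\Psi$, and move convergence back and forth through Proposition~\ref{p:BrMechConv}. \emph{For part~(i):} the L\'evy-convergence \eqref{e:MuLim} gives, via Proposition~\ref{p:BrMechConv}(i), $\scalek\Psi\to\Psi$ pointwise on $[0,\infty)$ with $\Psi$ as in \eqref{e:PsiLim}; since the $\scalek\Psi$ and $\Psi$ are convex and vanish at $0$, this convergence is automatically locally uniform on $[0,\infty)$. The limit $\Psi$ is $C^1$, nonnegative, increasing and convex, hence locally Lipschitz on $[0,\infty)$, so \eqref{e:PhiEvol} has a unique solution $\varphi(q,\cdot)$, which is nonincreasing, stays in $[0,q]$, and is globally defined and $C^2$ in $t$. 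Fixing $q\ge0$ and $(n_k)$ with $n_k\tau_k\to t$, I would note that $\suphk\varphi_n(q)=\frac1{h_k}\hat\varphi_n(h_kq)$ lies in $[0,1/h_k]$, is nonincreasing in $n$ (as $\scalek\Psi\ge0$ on $[0,1/h_k]$), and that $\suphk\varphi_0(q)=\frac1{h_k}(1-e^{-h_kq})\to q$, so all iterates stay in the fixed interval $[0,q+1]$; with $L$ a Lipschitz constant for $\Psi$ on $[0,q+1]$ and $\epsilon_k\defeq\sup_{[0,q+1]}\abs{\scalek\Psi-\Psi}\to0$, a comparison of one Euler step with the exact increment of $\varphi$ gives $e_{n+1}\le(1+L\tau_k)e_n+C\tau_k^2+\tau_k\epsilon_k$ for $e_n\defeq\abs{\suphk\varphi_n(q)-\varphi(q,n\tau_k)}$, $C$ bounding $\Psi'(\varphi)\Psi(\varphi)$, and discrete Gronwall (with $e_0\to0$, $n_k\tau_k$ bounded) yields $e_{n_k}\to0$, i.e.\ \eqref{e:PhihkConv}. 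Finally, for each $t$ the function $\varphi(\cdot,t)$ is a pointwise limit on $[0,\infty)$ of the drift-zero Bernstein functions $\suphk\varphi_{n_k}$, hence Bernstein \cite{SchillingSongEA10}, with representation \eqref{e:PhiBern1}; its constant term equals $\varphi(0,t)$, which vanishes because $\varphi(0,\cdot)$ solves \eqref{e:PhiEvol} with $\Psi(0)=0$ and $\varphi(0,0)=0$.

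\emph{For part~(ii):} assume \eqref{e:PhihkConv} and $\varphi(q_0,t_0)>0$ for some $q_0,t_0>0$. The first and central step is an a priori bound on $\scalek\Psi$. Telescoping \eqref{e:PhiDiff} gives
\begin{equation*}
  q_0-\suphk\varphi_n(q_0)=\bigl(q_0-\suphk\varphi_0(q_0)\bigr)+\tau_k\sum_{j=0}^{n-1}\scalek\Psi\bigl(\suphk\varphi_j(q_0)\bigr),
\end{equation*}
where every summand is nonnegative; since $j\mapsto\suphk\varphi_j(q_0)$ is nonincreasing and $\scalek\Psi$ is increasing, each summand is $\ge\scalek\Psi(\suphk\varphi_n(q_0))$, and $q_0-\suphk\varphi_0(q_0)\ge0$, so $\tau_k n\,\scalek\Psi(\suphk\varphi_n(q_0))\le q_0-\suphk\varphi_n(q_0)$. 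Taking $n=n_k$ with $n_k\tau_k\to t_0$ and using $\suphk\varphi_{n_k}(q_0)\to\varphi(q_0,t_0)>0$ gives $\limsup_k\scalek\Psi(\suphk\varphi_{n_k}(q_0))\le(q_0-\varphi(q_0,t_0))/t_0<\infty$, hence (monotonicity of $\scalek\Psi$) $\sup_k\scalek\Psi(q')<\infty$ for every $q'\in(0,\varphi(q_0,t_0))$. Feeding such a bound into the chain $(\inf f_0)\ip{1}{\kappa_k}\le\scalek\Psi'(q')\le\scalek\Psi(2q')/q'$ from the proof of Proposition~\ref{p:BrMechConv}(ii) (valid once $2q'h_k<1$) shows the measures $\kappa_k$, $d\kappa_k=(x\wedge1)\,d\scalek\mu$, are weak-$\star$ precompact on $[0,\infty]$. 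Along any subsequence choose a limit $\kappa$; then $(\scalek\mu)$ L\'evy-converges to $\kappa$ along that subsequence, so Proposition~\ref{p:BrMechConv}(i) gives $\scalek\Psi\to\Psi_\kappa$ on all of $[0,\infty)$ along it, with $\Psi_\kappa$ the branching mechanism \eqref{e:PsiLim} of $\kappa$. By part~(i) the given limit $\varphi$ must then solve $\partial_t\varphi=-\Psi_\kappa(\varphi)$, $\varphi(q,0)=q$, so $\Psi_\kappa(\varphi(q_0,t))=-\partial_t\varphi(q_0,t)$ for $t\in[0,t_0]$; this determines $\Psi_\kappa$ on the interval $[\varphi(q_0,t_0),q_0]$, which is nondegenerate unless $\varphi(q_0,\cdot)\equiv q_0$ (in which case $\Psi_\kappa\equiv0$ and $\kappa=0$). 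Since $\Psi_\kappa'$ is a Bernstein function, hence real-analytic on $(0,\infty)$, and $\Psi_\kappa(0)=0$, $\Psi_\kappa$ is determined on $[0,\infty)$, and then $\kappa$ is determined by bijectivity of the Bernstein transform (Theorem~\ref{t:BernsteinRep}). Thus every subsequential weak-$\star$ limit of $(\kappa_k)$ is the same, $(\kappa_k)$ converges, and \eqref{e:MuLim} holds.

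\emph{Expected main obstacle.} Part~(i) is a robust forward-Euler estimate: routine, but it does need the a priori containment of all iterates in a fixed compact together with local-uniform convergence of $\scalek\Psi$. The genuinely delicate point is the a priori bound in part~(ii) --- the hypothesis $\varphi(q_0,t_0)>0$ is exactly what keeps the rescaled branching mechanisms $\scalek\Psi$ from escaping to $+\infty$ (which would collapse the discrete flow to $0$ in the limit and wipe out all information about $(\scalek\mu)$), and turning the telescoped identity plus monotonicity into a genuine uniform bound on $\scalek\Psi$ is where the real work is.
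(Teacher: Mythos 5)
Your proof is correct and follows essentially the same route as the paper: part (i) is the same forward-Euler convergence argument (with Proposition~\ref{p:BrMechConv} supplying convergence of the rescaled branching mechanisms and closure of Bernstein functions under pointwise limits giving \eqref{e:PhiBern1}), and part (ii) extracts the same a priori bound on $\scalek\Psi$ from the telescoped scheme together with positivity of $\varphi(q_0,t_0)$, then uses \eqref{e:KappaKBd} for weak-$\star$ precompactness of $(\kappa_k)$ and uniqueness of subsequential limits. The only cosmetic difference is the identification step in (ii): the paper recovers $\Psi$ for every $q$ at once by evaluating \eqref{e:PhiEvol} at $t=0$ (where $\varphi(q,0)=q$), whereas you determine it only on $[\varphi(q_0,t_0),q_0]$ and extend by real-analyticity of Bernstein functions --- valid, but more than is needed.
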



  \begin{proof}
   We prove (i) by providing a rather straightforward
    proof that the forward Euler difference scheme in \eqref{e:PhiDiff} converges
    if \eqref{e:PsiLim} holds.
    Fix $q \in \R^+$ and $T > 0$.
    From equation~\eqref{e:PhiEvol} we see
    \begin{equation*}
      \varphi( q, t)
	= q - \int_0^{t} \Psi ( \varphi( q, s ) )\, ds.
    \end{equation*}
Because $s\mapsto\Psi( \varphi( q, s )) $ is positive decreasing,
$\varphi(q,s)\leq q$, 
whenever $n\tau_k \leq T$ we have
    \begin{equation*}
	\varphi( q, n\tau_k )
	= q - \tau_k \sum_{m=0}^{n - 1} \Psi ( \varphi(q, m \tau_k)) + R_1' \,,
    \end{equation*}
where 
    \begin{equation}
0\leq R_1'\leq \tau_k \sum_{m=0}^{n - 1} \paren[\Big]
{\Psi ( \varphi(q, m \tau_k) )- \Psi ( \varphi(q, (m+1) \tau_k))}
\leq \tau_k\Psi(q)\,.
    \end{equation}
Let $\varphi_k(q,n\tau_k) \defeq \suphk\varphi_n(q)$ and $\Psi_k \defeq \subhtk\Psi$.  Summing~\eqref{e:PhiDiff}, and noting $\nu_0^h =\delta_h/h$, we have%
    \begin{align} \label{e:PhikSum}
      \varphi_k( q, n\tau_k )
	&= \frac{1 - e^{-qh_k}}{h_k} - 
	\tau_k \sum_{m = 0}^{n-1} \Psi_k( \varphi_k(q, m \tau_k) )
	\\&
	= q - \tau_k \sum_{m = 0}^{n-1} \Psi( \varphi_k(q, m \tau_k) ) + R_2'\,,
  \nonumber
    \end{align}
    where, because $n\tau_k\leq T$,
    \begin{equation}
      \abs{R_2'} \leq M_k \defeq {q^2 h_k} + T\sup_{[0, q]} \abs{ \Psi_k - \Psi}\,.
    \end{equation}
Consequently, because $\Psi'$ is positive and increasing on $[0,q]$,
    \begin{align*}
      \abs{(\varphi - \varphi_k)( q, n\tau_k)}
	&\leq  \abs{ R_1'}+ \abs{R_2'} +
	\tau_k
	  \sum_{m=0}^{n-1}
	  \abs[\big]{ \Psi( \varphi(q, m \tau_k)) - \Psi(\varphi_k( q, m \tau_k )) }
	\\
	&\leq \tau_k\Psi(q)+M_k
	  +\tau_k \Psi'(q)
	  \sum_{m=0}^{n-1}
	  \abs[\big]{ (\varphi - \varphi_k)( q, m \tau_k ) }\,.
    \end{align*}
Hence, by the discrete Gronwall inequality,
\begin{equation*}
      \abs{(\varphi - \varphi_k)( q, n\tau_k)}  
	\leq \paren[\big]{ \tau_k\Psi(q)+M_k}  
	  \paren[\big]{1 + \tau_k \Psi'(q)}^n
	\leq \paren[\big]{ \tau_k\Psi(q)+M_k} 
	  e^{\Psi'(q) n \tau_k}.
\end{equation*}
Proposition~\ref{p:BrMechConv} guarantees $M_k\to 0$ as $k \to \infty$.
Because $\varphi(q,t)$ has bounded derivative $|\partial_t\varphi|\leq\Psi (q)$,
we may infer that whenever $n\tau_k\to t\in [0,T)$,
\begin{equation*}
  \abs{\varphi_k(q,n\tau_k)-\varphi(q,t)} \to 0
    \quad\text{as } k\to\infty \,.
\end{equation*}
This finishes the proof of convergence of $(\varphi_k)$.
Because $\varphi_k(\cdot,n\tau_k)$ is Bernstein, and any pointwise limit of Bernstein
functions is Bernstein \cite[Cor.~3.7]{SchillingSongEA10}, the limit $\varphi(\cdot,t)$ is Bernstein and has the form in \eqref{e:PhiBern1} for some L\'evy triple 
$(\beta_0,\beta_\infty,\nu_t)$. We must have $\beta_\infty=0$, however, because
$\varphi(q,t)\leq q$ for all $q,t\geq0$.
\smallskip

 For the proof of (ii), choose $q, t>0$ such that $\varphi(q, t) > 0$.
 From~\eqref{e:PhikSum} we find that $\varphi_k(q, m \tau_k) \geq \varphi_k(q, n \tau_k)$ for $m \leq n$.
 Consequently, for $k$ sufficiently large, with $n=\floor{t/\tau_k}$ we have $n\tau_k>t/2$
 and $\varphi_k(q,n \tau_k)>\varphi(q,t)/2$, and we find from \eqref{e:PhikSum} again that
 \[
 \frac{t}2 \Psi_k\left(\frac{\varphi(q,t)}2\right) \leq
 \tau_k \sum_{m = 0}^{n-1} \Psi_k( \varphi_k(q, m \tau_k) )
 \leq \frac{1 - e^{-qh_k}}{h_k} \leq q \,.
 \]
%
Hence $\set{  \Psi_k( \varphi(q, t) / 2 ) }$ is bounded.
    Following the proof of Proposition~\ref{p:BrMechConv} (specifically, using~\eqref{e:KappaKBd}) we see that the set of measures $\set{\kappa_k}$ is weak-$\star$ pre-compact.
    (Here~$\kappa_k$ is  defined by~\eqref{defKappak}.)

Thus, any subsequence of $(\kappa_k)$ has a further subsequence that converges weak-$\star$ on $[0,\infty]$. Let $\kappa$ denote any such limit. 
Then, by part (i) above, we know \eqref{e:PhiEvol} holds with $\Psi$ given
 by taking the limit in \eqref{e:PsiLim}  along the appropriate subsequence, 
 as in the proof of part (ii) of Proposition~\ref{p:BrMechConv}. 
 Then $\varphi$ is $C^1$, and $\Psi$ is determined by $\varphi$ by evaluating
 \eqref{e:PhiEvol} at $t=0$. As in the proof of Proposition~\ref{p:BrMechConv},
 it follows $\kappa$ is determined by $\varphi$, hence the whole
 sequence $(\kappa_k)$ converges. This means \eqref{e:MuLim} holds,
 finishing the proof.
 \end{proof}


  Because convergence of the Bernstein transforms is equivalent to 
  convergence of corresponding L\'evy triples and 
  weak-$\star$ convergence of corresponding measures on $[0,\infty]$ by the continuity theorem~\ref{t:Cont}, we obtain the following corollary.

\begin{corollary}\label{clyLevyTripConv}
Under the same assumptions as Proposition~\ref{p:BTConv} part (i), 
we have that 
for every $t \geq 0$,  as $k\to\infty$ and whenever $nh_k\to t$,
    \begin{equation}\label{e:NukLim}
    (x\wedge1)\,d\suphk\nu_{n}(x) \to \beta_0(t)\,d\delta_0(x)+ (x\wedge1)d\nu_t(x)  \quad\text{ weak-$\star$ on $[0, \infty]$.}
    \end{equation}
    Conversely, if this convergence holds, then~\eqref{e:MuLim} holds for some finite measure $\kappa$ on $[0,\infty]$. 
    
Further, we have $\beta_0(t) =0$ for all $t>0$ if and only if in 
\eqref{e:PsiLim} we have $\Psi'(\infty)=\infty$, or equivalently
\begin{equation} \label{e:BetaZero}
 a_0 >0 \quad\mbox{or}\quad \int_{(0,\infty)} d\mu(x) = \infty\,.
\end{equation}
In case $\Psi'(\infty)<\infty$ then $\beta_0(t)=\exp(-\Psi'(\infty)t)$.

Moreover, we have $\varphi(\infty,t)=\ip{1}{\nu_t}<\infty$ if and only if Grey's condition holds:
\begin{equation}\label{e:Grey}
  \int_{[1,\infty)} \frac{du}{\Psi(u)} < \infty\,.
\end{equation}
In this case, the limiting family of finite measures $(\nu_t)_{t\geq0}$ is a weak solution of the generalized damped Smoluchowski equation
\begin{equation}\label{e:SmolSubC}
  \partial_t \ip{f}{\bnu_t} = - \Psi'(0^+) \ip{f}{\bnu_t} + \sum_{k \geq 2} R_k I_k(\bnu_t,f) \,,
\end{equation}
for all test functions $f \in C([0, \infty])$.
Here $R_k$ and $I_k$ are as in~\eqref{e:IkDef} and~\eqref{e:RkPsi} respectively.
\end{corollary}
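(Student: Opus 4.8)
The plan is to deduce each assertion from Proposition~\ref{p:BTConv} together with the continuity theorem for Bernstein transforms (Theorem~\ref{t:Cont}) proved in the appendix. First, the weak-$\star$ convergence in \eqref{e:NukLim} follows directly: Proposition~\ref{p:BTConv}(i) gives $\suphk\varphi_n(q)\to\varphi(q,t)$ pointwise whenever $n\tau_k\to t$, and since $\suphk\varphi_n$ is the Bernstein transform of the L\'evy triple $(0,0,\suphk\nu_n)$ and $\varphi(\cdot,t)$ is the Bernstein transform of $(\beta_0(t),0,\nu_t)$, Theorem~\ref{t:Cont} translates pointwise convergence of Bernstein transforms into convergence of the associated L\'evy triples, which is exactly \eqref{e:NukLim} (the mass $\beta_0(t)$ at $0$ being the coefficient of the linear term). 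The converse is the converse half of Theorem~\ref{t:Cont} combined with Proposition~\ref{p:BTConv}(ii): weak-$\star$ convergence in \eqref{e:NukLim} forces pointwise convergence $\suphk\varphi_n(q)\to\varphi(q,t)$ with $\varphi(\cdot,t)$ Bernstein and not identically zero (the atom or the measure part is nontrivial, giving $\varphi(q_0,t_0)>0$), and then \eqref{e:MuLim} holds by Proposition~\ref{p:BTConv}(ii).

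Next, the dichotomy for $\beta_0(t)$. From the ODE \eqref{e:PhiEvol}, differentiate in $q$ or pass to $q\to\infty$: write $\varphi(\infty,t)=\lim_{q\to\infty}\varphi(q,t)$ and $\beta_0(t)=\lim_{q\to\infty}\varphi(q,t)/q = \partial_q\varphi(q,t)|_{q=\infty}$. Differentiating \eqref{e:PhiEvol} in $q$ gives $\partial_t\partial_q\varphi = -\Psi'(\varphi)\,\partial_q\varphi$; letting $q\to\infty$ and using $\varphi(\infty,t)$ in place of the argument yields $\dot\beta_0(t) = -\Psi'(\varphi(\infty,t))\,\beta_0(t)$, with $\beta_0(0)=1$. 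If $\Psi'(\infty)=\infty$ (equivalently \eqref{e:BetaZero}, by the L\'evy--Khintchine form of $\Psi'$ in \eqref{e:DPsi}), one checks $\varphi(\infty,t)=\infty$ for $t>0$ unless $\beta_0(t)=0$, and a short argument (the standard one: if $\beta_0(t_0)>0$ then $\varphi(\infty,s)=\infty$ on a neighborhood, forcing $\dot\beta_0=-\infty$, contradiction) shows $\beta_0(t)\equiv0$ for $t>0$. If instead $\Psi'(\infty)<\infty$, then $\Psi'(\varphi(\infty,t))\le\Psi'(\infty)<\infty$, so $\beta_0$ stays positive, hence $\varphi(\infty,t)=\infty$ is impossible and in fact $\varphi(\infty,t)<\infty$ would follow; more directly one solves the linear ODE treating $\Psi'(\varphi(\infty,t))=\Psi'(\infty)$ (valid once one knows $\varphi(\infty,t)=\infty$... ) — the cleaner route is: by monotone convergence $\varphi(\infty,t)$ satisfies $\partial_t\varphi(\infty,t)=-\Psi(\varphi(\infty,t))$ with value in $(0,\infty]$, and then $\beta_0(t)=\exp(-\int_0^t\Psi'(\varphi(\infty,s))\,ds)$; when $\Psi'(\infty)<\infty$ this is bounded below, and one identifies it as $\exp(-\Psi'(\infty)t)$ by showing $\varphi(\infty,s)=\infty$ (Grey's condition \eqref{e:Grey} fails here, so the explosion happens and $\Psi'(\varphi(\infty,s))=\Psi'(\infty)$).

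For the last part, $\varphi(\infty,t)=\ip{1}{\nu_t}<\infty$: from $\partial_t\varphi(\infty,t)=-\Psi(\varphi(\infty,t))$ with $\varphi(\infty,0^+)=\infty$, separation of variables gives $t=\int_{\varphi(\infty,t)}^\infty du/\Psi(u)$, which is finite for some (hence all) $t>0$ precisely when Grey's condition \eqref{e:Grey} holds; this is the classical conservativeness criterion cited in Section~\ref{s:CSBPintro}. When $\ip{1}{\nu_t}<\infty$, the measures $\nu_t$ are finite, so testing the discrete Smoluchowski identity \eqref{e:DiscSmolNu} (in its rescaled form, the displayed equation in the Lemma preceding this subsection) against $f\in C([0,\infty])$ and passing to the limit using \eqref{e:NukLim}, the convergence $\suphk\rho_n\to\ip{1}{\nu_t}=\rho_t$, and the convergence of the derivatives $\scalek\Psi^{(k)}\to\Psi^{(k)}$ from Proposition~\ref{p:BrMechConv}(i) (which gives $\supht R_k\to R_k$), yields \eqref{e:SmolSubC}; the damping term $-\Psi'(0^+)\ip{f}{\nu_t}$ arises exactly as the continuum analog of the $(\Xi-1)\nu_n(j)$ correction in \eqref{e:PnHatSmolSubC}, since in the rescaled limit the criticality defect $1-\Xi_k$ concentrates into $\Psi'(0^+)=\alpha_\infty$.

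The main obstacle I expect is the $\beta_0(t)$ dichotomy and its interaction with $\varphi(\infty,t)$: one must handle carefully the case where $\varphi(\infty,t)=\infty$ (so that $\Psi'(\varphi(\infty,t))$ is literally $\Psi'(\infty)$) versus $\varphi(\infty,t)<\infty$, and justify interchanging the $q\to\infty$ limit with the time evolution — this is where uniform control of the Bernstein functions $\varphi_k(\cdot,n\tau_k)$ (monotone in $q$, bounded by $q$) and monotone convergence do the work, but the bookkeeping linking $\Psi'(\infty)=\infty$, $\int d\mu=\infty$ or $a_0>0$, and Grey's condition requires care. The passage to the limit in the Smoluchowski equation is routine once finiteness of $\nu_t$ is in hand, since all the pieces ($R_k$, $I_k$, the pairings) converge by the results already established.
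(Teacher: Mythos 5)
The first three assertions you handle acceptably: the equivalence \eqref{e:NukLim}$\,\Leftrightarrow$\,\eqref{e:MuLim} is, as in the paper, just Proposition~\ref{p:BTConv} combined with the continuity theorem~\ref{t:Cont}, and the Grey criterion is the same separation-of-variables argument. For the $\beta_0$ dichotomy you argue through the formal ODE $\dot\beta_0(t)=-\Psi'(\varphi(\infty,t))\beta_0(t)$, obtained by differentiating \eqref{e:PhiEvol} in $q$ and sending $q\to\infty$; you flag, but do not supply, the justification for interchanging $q\to\infty$ with the time evolution and for evaluating $\Psi'$ at the (possibly infinite) argument $\varphi(\infty,t)$. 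The paper avoids this entirely with integrated inequalities: if $\beta_0(t)\ge\eps$ then concavity gives $\varphi(q,s)\ge\eps q$, hence $q\ge t\Psi(\eps q)$ and $\Psi'$ is bounded; conversely $\Psi'\le M$ gives $\varphi(q,t)\ge qe^{-Mt}$, and the formula $\beta_0(t)=e^{-\Psi'(\infty)t}$ comes from $\frac1q\int_s^t\Psi(\varphi(q,r))\,dr\to\Psi'(\infty)\int_s^t\beta_0(r)\,dr$. Your version is repairable along these lines, but as written it is formal.

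The genuine gap is in your derivation of \eqref{e:SmolSubC}. You propose to pass to the limit term by term in the rescaled discrete Smoluchowski identity, and this requires $\suphk\rho_n=\ip{1}{\suphk\nu_n}\to\rho_t=\ip{1}{\nu_t}$ (both to send $\subhtk\Psi^{(k)}(\suphk\rho_n)\to(-1)^k k!\,R_k/\rho_t^k$ and to control the normalized integrals $I_k$, and on top of that the sum over $k$ must be exchanged with the limit). This convergence of total masses does \emph{not} follow from \eqref{e:NukLim}: weak-$\star$ convergence of $(x\wedge1)\,d\suphk\nu_n$ on $[0,\infty]$ carries no information about the number of clusters of vanishing rescaled size, and in fact the claim is false under the corollary's hypotheses. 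For instance, take $\tau_k=h_k$, $M_k=\ln\ln(1/h_k)$, and let $\hat\pi_k$ put mass $b_k\sim\tau_kM_k^2h_k$ on the single family size $j_k\sim(M_kh_k)^{-1}$, with $\hat\pi_k(0)=(j_k-1)b_k$ and the rest on $1$. Then $(x\wedge1)\,d\scalek\mu\to\delta_0$, so $\Psi(q)=q^2/2$ and Grey's condition holds, yet $\hat\Psi_k(\rho)\le\tau_kM_k\rho$ on the relevant range forces the survival probability to satisfy $\rho_n\gtrsim e^{-M_kt}$, whence $\suphk\rho_n\gtrsim e^{-M_kt}/h_k\to\infty\neq\rho_t=2/t$: the surviving population sits in a diverging number of clusters of negligible total rescaled mass, invisible to \eqref{e:NukLim}. (Your closing remark that the damping term comes from a criticality defect $1-\Xi_k$ is also off: each $\hat\pi_k$ is exactly critical, so $\Xi_k=1$; the damping $\Psi'(0^+)=\alpha_\infty$ is produced by mass escaping to infinite size in the limit.) The paper therefore does not argue discretely at all here: it proves \eqref{e:SmolSubC} purely at the continuum level, first for $f_q(x)=1-e^{-qx}$, where $\sum_{k\ge2}R_kI_k(\nu_t,f_q)$ collapses, via the Taylor expansion of $\Psi$ about $\rho_t$, to $-\Psi(\varphi(q,t))+\Psi'(0^+)\varphi(q,t)$, which equals $\partial_t\ip{f_q}{\nu_t}+\Psi'(0^+)\ip{f_q}{\nu_t}$ by \eqref{e:PhiEvol}, and then for general $f\in C([0,\infty])$ by approximation. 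You would need either this continuum argument or a substantially stronger (uniform-in-$k$) hypothesis to make your discrete limit passage work.
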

\begin{remark*}
  In the critical case $\Psi'(0^+) = 0$ and~\eqref{e:SmolSubC} is precisely the generalized Smoluchowski equation~\eqref{e:Smol} encountered before.
\end{remark*}

  \begin{remark*}
  The convergence property \eqref{e:NukLim} explains precisely how the L\'evy jump measure of a (sub)critical CSBP $X$ arises as a limit of scaled $n$-th generation descendant distributions.
  This was one of our main motivations for developing this treatment of continuum limits of Galton-Watson processes.
  \end{remark*}

\begin{remark*}
For the case when Grey's condition \eqref{e:Grey} does not hold, Bertoin and Le Gall 
have proposed an interesting generalization of Smoluchowski's equation in terms of the
sum of locations of atoms of Poisson random measures; 
see~\cite[Eq.~(26)]{BertoinLeGall06}.
\end{remark*}

  \begin{proof}
    \begin{inparaenum}[1.~]
    \item The weak convergence in \eqref{e:NukLim} and the converse follow immediately from Proposition~\ref{p:BrMechConv} and the continuity theorem for Bernstein transforms (Theorem~\ref{t:Cont}).

    \item
      Next, note that by \eqref{e:PhiBern1} and because $(1-e^{-qx})/q\leq x\wedge(1/q)\to0$
as $q\to\infty$,
\begin{equation}\label{e:Beta0}
\beta_0(t) = \lim_{q\to\infty} \frac{\varphi(q,t)}{q}\,.
\end{equation}
We now claim $\beta_0(t)=0$ for all $t>0$ if and only if $\lim_{q\to\infty}\Psi'(q)=\infty$.
Indeed, if $\beta_0(t)\geq\eps>0$ for some $t>0$, then because $\varphi(q,t)$ is 
concave in $q$ and decreasing in $t$, necessarily $\varphi(q,t)\geq \eps q$ and 
\[
q \geq \int_0^t\Psi(\varphi(q,s))\,ds \geq t \Psi(\eps q)
\]
for all $q>0$. Because $\Psi$ is convex it follows $\Psi'(q)\leq 1/(\eps t)$ for all $q$.
Conversely, if $\Psi'(q)\leq M$ for all $q$ then $\Psi(q)\leq Mq$ and 
$\varphi(q,t)\geq qe^{-Mt}$ by \eqref{e:PhiEvol}.


Now, the condition $\lim_{q\to\infty}\Psi'(q)=\infty$ is easily seen to be equivalent 
to \eqref{e:BetaZero}. This establishes both as necessary and sufficient to
have $\beta_0(t)\equiv0$.
In the case $\Psi'(\infty)<\infty$, then due to \eqref{e:PhiEvol} and \eqref{e:Beta0}
we find
\[
\beta_0(t)-\beta_0(s) = -\lim_{q\to\infty} \frac1q \int_s^t \Psi(\varphi(q,r))\,dr =
 -\int_s^t \Psi'(\infty)\beta_0(r)\,dr\,,
\]
whence $\beta(t)=\exp(-\Psi'(\infty)t)$ since $\beta_0(0)=1$.

\item That Grey's condition is necessary and sufficient for $\ip 1{\nu_t}<\infty$ 
is well-known (see for example~\cite[proof of Theorem 12.5]{Kyprianou14})
and is easily deduced by separating variables in \eqref{e:PhiEvol} and integrating.

\item Finally, we show that $(\nu_t)$ satisfies~\eqref{e:SmolSubC}.
  In the case that~$\Psi$ is critical (i.e.\ $\Psi'(0^+) = 0$) a proof can be found in \cite[Proposition~3]{BertoinLeGall06} or \cite[Theorem~3.2]{IyerLegerEA15}.
  In the general case the proof closely resembles~\cite[Theorem~3.2]{IyerLegerEA15} and we sketch the details here.

  The main idea is to establish~\eqref{e:SmolSubC} for test functions of the form
  $
    f_q(x) \defeq 1 - e^{-qx}
  $.
  Indeed, observe
  \begin{align*}
    \rho_t^k I_k( \bnu, f_q )
      &= \int_{(0,\infty)^k} \brak[\Big]{ f_q\paren[\Big]{\sum_{i=1}^k x_i} - \sum_{i=1}^k f_q(x_i) } \, d\bnu_t^k\\
      &= \int_{(0, \infty)^k} \brak[\Big]{ 1 - \prod_{i=1}^k e^{-q x_i} -\sum_{i=1}^k \paren[\big]{ 1 - e^{-q x_i} } } \, d\bnu_t^k\\
   &= \rho_t^k - (\rho_t-\varphi(q, t))^k - k \rho_t^{k-1}\varphi(q, t) \,.
  \end{align*}
  Consequently
  \begin{align*}
    \sum_{k \geq 2} R_k I_k( \bnu, f_q )
      &= \sum_{k \geq 2}
	  \frac{(-1)^k \Psi^{(k)}(\rho_t)}{k!}
	  \brak[\Big]{
	    \rho_t^k - (\rho_t-\varphi(q, t))^k - k \rho_t^{k-1}\varphi(q, t)
	  }
      \\
      &= \sum_{k \geq 0}
	  \frac{(-1)^k \Psi^{(k)}(\rho_t)}{k!}
	  \brak[\Big]{
	    \rho_t^k - (\rho_t-\varphi(q, t))^k - k \rho_t^{k-1}\varphi(q, t)
	  }
      \\
      &= \Psi(0) - \Psi( \varphi(q, t) ) + \varphi(q, t) \Psi'(0)
      = \varphi(q, t) \Psi'(0) + \partial_t \ip{\bnu}{f_q} \,,
  \end{align*}
  where second equality is true because the terms for $k = 0, 1$ are $0$, the third equality follows from the Taylor expansion of $\Psi$ about $\rho_t$, and the last equality follows from~\eqref{e:PhiEvol} and the fact that $\Psi(0) = 0$.

  This proves~\eqref{e:SmolSubC} is satisfied for test functions of the form $f_q$ above.
  The general case follows from an approximation argument the details of which are the same as in the proof of Theorem~3.2 in~\cite{IyerLegerEA15}.
\end{inparaenum}
  \end{proof}

  \subsection{Convergence of Galton-Watson processes.}\label{s:GW}
  Finally, we conclude this section with convergence results for the rescaled Galton-Watson processes.
  We rescale the population by a factor of $h_k$ and the reproduction time by a factor of $\tau_k$.
  Explicitly, the rescaled Galton-Watson process $Y^{(k)}$ is defined by
  \begin{equation}\label{e:YkDef}
    Y^{(k)}_{t}(x) = h_k X_{\floor{t/\tau_k}, k}\paren[\Big]{ \floor[\Big]{ \frac{x}{h_k} } } \,.
  \end{equation}
  We recall that the argument of the processes refers to the initial population and is usually suppressed.
%

  \begin{proposition}\label{p:GWConv}
    Make the same assumptions as in Proposition~\ref{p:BrMechConv}.

    \begin{asparaenum}[(i)]
      \item
	Suppose there exists a finite measure~$\kappa$ on $[0, \infty]$ such that $(\scalek \mu)$ L\'evy-converges (as in Definition~\ref{d:Lconv}) to~$\kappa$.
	Then the finite-dimensional distributions of the rescaled Galton-Watson processes $Y^{(k)}$ converge to those of a 
finite (sub)critical CSBP~$Z$.
	Further, the Bernstein transforms of $Y^{(k)}$'s converge pointwise to the Laplace exponent of $Z$.
	That is, if
	\begin{gather*}
	  \varphi_k(q, t)
	  \defeq \frac{1}{h_k}
	      \E \brak[\big]{
		1 - \exp\paren[\big]{-q Y^{(k)}_{t}(h_k)} 
	      }\,,
	  \\
	  \llap{\text{and}\qquad}
	  \varphi(q, t)
	  \defeq -\ln \E \brak{ \exp\paren{-q Z_t(1)} } 
	\end{gather*}
	then
	\begin{equation*}
	  \lim_{k \to \infty} \varphi_k(q, t)
	  = \varphi(q, t)\,,
	\end{equation*}
	for every $q \in \R^+$ and $t \geq 0$.

      \item
	Conversely, if the one-dimensional distributions of $Y^{(k)}$ converge to those of a non-negative process $Z$ such that $\P\set{Z_{t_0}(x_0) > 0 }> 0$ for some $x_0,t_0 > 0$,
then there exists a finite measure $\kappa$ on $[0, \infty]$ such that $(\scalek \mu)$ L\'evy-converges to~$\kappa$.
	Consequently, $Z$ may be chosen to be a finite (sub)critical CSBP.
    \end{asparaenum}
  \end{proposition}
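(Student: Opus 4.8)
\medskip

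The plan is to reduce both halves of the statement to Propositions~\ref{p:BrMechConv} and~\ref{p:BTConv}, the bridge being the elementary identity $\varphi_k(q,t)=\suphk\varphi_{\floor{t/\tau_k}}(q)$. Indeed $Y^{(k)}_t(h_k)=h_k X_{\floor{t/\tau_k},k}$ is $h_k$ times the $k$-th unscaled Galton--Watson process started from population~$1$, so by~\eqref{e:TiPhinDef} we have $\varphi_k(q,t)=h_k^{-1}\hat\varphi_{\floor{t/\tau_k},k}(h_k q)=\suphk\varphi_n(q)$ with $n=\floor{t/\tau_k}$ (hence $n\tau_k\to t$). Thus for part~(i) Proposition~\ref{p:BTConv}(i) applies verbatim and yields $\varphi_k(q,t)\to\varphi(q,t)$ for all $q,t\ge0$, where $\varphi$ is the unique solution of the backward equation~\eqref{e:PhiEvol} with $\Psi$ as in~\eqref{e:PsiLim}, and each $\varphi(\cdot,t)$ is Bernstein of the form~\eqref{e:PhiBern1} with $\beta_\infty=0$. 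As recorded in the remark after Proposition~\ref{p:BrMechConv}, $\Psi$ is precisely the branching mechanism of a finite (sub)critical CSBP; letting $Z$ be such a CSBP (existence and uniqueness in law being standard, see Section~\ref{s:CSBPintro} and~\cite[Ch.~12]{Kyprianou14}), its Laplace exponent $-\ln\E\exp(-qZ_t(1))$ solves~\eqref{e:PhiEvol1} (equivalently~\eqref{e:PhiEvol}), hence equals $\varphi$ by uniqueness. This is the asserted pointwise convergence of Bernstein transforms.

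To pass from one-dimensional to finite-dimensional convergence of $Y^{(k)}$ toward $Z$, first note that the branching property of $X_{\cdot,k}$ gives, for any $x>0$, $\E\exp(-qY^{(k)}_s(x))=\bigl(1-h_k\varphi_k(q,s)\bigr)^{\floor{x/h_k}}=\exp\bigl(-\floor{x/h_k}h_k\,\psi_k(q,s)\bigr)$, where $\psi_k(q,s)\defeq-h_k^{-1}\ln(1-h_k\varphi_k(q,s))$. Criticality of $\hat\pi_k$ forces $\E X_{n,k}=1$, hence $\varphi_k(q,s)\le q$, so $h_k\varphi_k(q,s)\le qh_k\to0$ and $\psi_k(q,s)=\varphi_k(q,s)\bigl(1+O(h_k)\bigr)$; consequently $\psi_k(q,s_k)\to\varphi(q,t)$ whenever $s_k\to t$ (using~\eqref{e:PhihkConv} in the ``$n\tau_k\to t$'' form), and since the $\varphi_k(\cdot,s)$ are increasing with continuous limit this convergence is locally uniform in the first variable. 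Now both $Y^{(k)}$ and $Z$ are branching Markov processes: conditioning at the penultimate time and invoking the branching property turns an $m$-time Laplace functional into an $(m-1)$-time one in which the weight at time $t_{m-1}$ becomes $q_{m-1}+\psi_k(q_m,s_k^{(m)})$, where $s_k^{(i)}=(\floor{t_i/\tau_k}-\floor{t_{i-1}/\tau_k})\tau_k\to t_i-t_{i-1}$. Iterating down to a single time, the $m$-dimensional Laplace functional of $Y^{(k)}$ equals $\exp(-\floor{x/h_k}h_k Q_k)$, where $Q_k$ is obtained from $q_1,\dots,q_m$ by finitely many additions and applications of the maps $\psi_k(\cdot,s_k^{(i)})$; the analogous functional for $Z$ is $\exp(-xQ)$ with $Q$ built the same way from the $\varphi(\cdot,t_i-t_{i-1})$. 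Since $Q_k\to Q$ and $\floor{x/h_k}h_k\to x$, these converge, which is convergence of finite-dimensional distributions.

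For part~(ii), the hypothesis gives $\E\exp(-qY^{(k)}_t(x))\to\E\exp(-qZ_t(x))$ for all $q,t,x$. Using the identity $\E\exp(-qY^{(k)}_t(x))=(1-h_k\varphi_k(q,t))^{\floor{x/h_k}}$ from the previous paragraph, taking logarithms, and using $0\le h_k\varphi_k(q,t)\le qh_k\to0$ together with $\floor{x/h_k}h_k\to x>0$, we conclude that $\varphi_k(q,t)$ converges to a limit $\varphi(q,t)$ satisfying $\E\exp(-qZ_t(x))=\exp(-x\varphi(q,t))$ for every $x$; in particular $\varphi(q_0,t_0)>0$ because $\P\set{Z_{t_0}(x_0)>0}>0$. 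Since $\varphi_k(q,t)=\suphk\varphi_{\floor{t/\tau_k}}(q)$, this is precisely the hypothesis of Proposition~\ref{p:BTConv}(ii) (whose proof uses only convergence along $n=\floor{t/\tau_k}$), so $(\scalek\mu)$ L\'evy-converges to some finite measure $\kappa$ on $[0,\infty]$. Part~(i) then supplies a finite (sub)critical CSBP with Laplace exponent $\varphi$, whose one-dimensional distributions necessarily coincide with those of $Z$; hence $Z$ may be taken to be such a CSBP.

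The only real work is the finite-dimensional bootstrap in~(i): setting up the backward recursion for the Laplace functionals, keeping track of the floor functions and of the time increments $s_k^{(i)}$, and pushing the limit through the compositions --- the last point relying only on the elementary fact that a pointwise limit of increasing (here Bernstein) functions with continuous limit is automatically locally uniform. Everything else is bookkeeping once $\varphi_k(q,t)=\suphk\varphi_{\floor{t/\tau_k}}(q)$ is identified. A minor point worth recording, as above, is that the hypothesis ``\eqref{e:PhihkConv} holds'' demanded by Proposition~\ref{p:BTConv}(ii) is needed only along $n=\floor{t/\tau_k}$, which is exactly what the logarithm computation in~(ii) produces.
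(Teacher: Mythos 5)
Your proof is correct and follows essentially the same route as the paper: identify $\varphi_k(q,t)$ with the rescaled Bernstein transform at $n=\floor{t/\tau_k}$, invoke Propositions~\ref{p:BrMechConv} and~\ref{p:BTConv}, and use $\bigl(1-h_k\varphi_k(q,t)\bigr)^{\floor{x/h_k}}\to e^{-x\varphi(q,t)}$ in both directions, with part~(ii) closed by feeding the extracted limit back into Proposition~\ref{p:BTConv}(ii) and part~(i). The only real difference is that where the paper simply cites Lamperti for passing from one-dimensional to finite-dimensional convergence, you carry out the backward recursion for the multi-time Laplace functionals explicitly (using local uniformity of the convergence of the monotone maps $\psi_k$), and your remark that Proposition~\ref{p:BTConv}(ii) is only needed along $n=\floor{t/\tau_k}$ accurately reflects how that proof actually runs.
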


  \begin{remark}\label{r:subcLim}
    Before presenting the proof, we momentarily pause to remark on criticality.
    Each process $Y^{(k)}$ is a critical Galton-Watson process, however, the limit need not be critical.
    Indeed, the branching mechanism of the limit is given by~\eqref{e:PsiLim} with $\alpha_\infty \geq 0$.
    This means that the limiting process $Z$ can either be \emph{critical} or \emph{subcritical}.
    The subcritical situation ($\alpha_\infty > 0$) arises precisely when the rescaled measures $\scalek\mu$  L\'evy-converge to a measure that has an atom at $\infty$.
    Physically this corresponds to the situation when the rescaled descendant distributions~$\scalek\mu$ have a negligible fraction of large families that contain a non-negligible fraction of the population.
In the continuum limit, this fraction of the population ends up in families of `infinite' size, 
and the total population observed in families of finite size decays in time at exponential rate $a_\infty$.
  \end{remark}

  \begin{proof}
    First assume~\eqref{e:MuLim} holds.
    By Proposition~\ref{p:BTConv} we know that~$\varphi_k$ converges to a function~$\varphi$ which satisfies~\eqref{e:PhiEvol}.
    We know from~\cite{IyerLegerEA15} (see also~\cite{BertoinLeGall06}) that the solution of~\eqref{e:PhiEvol} is the Laplace exponent of a (sub)critical CSBP~$Z$.
    To finish the proof we only need to show that the finite-dimensional distributions of $Y^{(k)}$ converge to that of $Z$.
    Let $x > 0$ be the initial population and fix $t > 0$.
    Define $N_k = \floor{x / h_k}$ and observe
    \begin{multline*}
      \E \paren[\Big]{
	\exp\paren[\big]{ -q Y^{(k)}_t(x) } 
      }
      =
      \E \paren[\Big]{
	\exp\paren[\big]{ -q Y^{(k)}_t(N_k h_k)} 
      }
      \\
      = \brak[\Big]{ \E \paren[\Big]{
	  \exp\paren[\big]{ -q Y^{(k)}_t(h_k) } 
	} }^{N_k}
      = \brak[\Big]{ 1 - h_k \varphi_k( q, t ) }^{N_k}
      \\
      \xrightarrow{k \to \infty} e^{-\varphi(q, t) x}
      = \E \paren[\Big]{
	\exp\paren[\big]{ -q Z_t(x) } 
      }
    \end{multline*}
    This shows that the Laplace transforms of $Y^{(k)}_t$ converge to the Laplace transform of $Z$, proving convergence of one-dimensional distributions.
    The convergence of higher dimensional distributions can now be obtained as in~\cite[page 280]{Lamperti67a}.

    For the converse, assume that the one-dimensional distributions of $Y^{(k)}$ converge to that of a process $Z$ with $\P\{Z_{t_0} > 0\} > 0$.
    Let
    $x > 0$, and $N_k = \floor{x / h_k}$, and observe convergence of one-dimensional distributions implies
    \begin{equation*}
      \E \paren[\Big]{
	\exp\paren[\big]{ -q Y^{(k)}_{t_0}(x) } 
      }
      \xrightarrow{k \to \infty} e^{-\varphi(q, {t_0}) x}
      = \E \paren[\Big]{
	\exp\paren[\big]{ -q Z_{t_0}(x) } 
      }\,.
    \end{equation*}
    Thus
    \begin{equation*}
      \lim_{k \to \infty}
	\brak[\Big]{ 1 - h_k \varphi_k( q, t_0 ) }^{N_k}
      \quad\text{exists and equals }
      e^{- \varphi(q, t_0) x}\,,
    \end{equation*}
    for some function $\varphi$.
    Clearly $\varphi(q, t_0)x$ must be the Laplace exponent of $Z_{t_0}(x)$, and further
    \begin{equation*}
      \lim_{k \to \infty} \varphi_k(q, t_0)
	= \lim_{k \to \infty}
	  \frac{1 - e^{-\varphi(q, t_0)x / N_k}}{h_k}
	= \varphi(q, t_0).
    \end{equation*}
    By our assumption on $Z$, we know $\varphi(q, t_0) > 0$ and we can apply Proposition~\ref{p:BTConv}.
    This will guarantee \eqref{e:MuLim} holds. By using part (i), we infer there is a
finite (sub)critical CSBP with the same one-dimensional distributions as $Z$. This completes the proof.
  \end{proof}

  As an immediate corollary, we prove a special case of a result in~\cite{BertoinLeGall06} showing that the L\'evy jump measure of certain CSBPs satisfy the Smoluchowski equation.
  \begin{corollary}\label{c:CSBPSmol}
    Let $Z$ be a finite (sub)critical CSBP whose branching mechanism satisfies Grey's condition~\eqref{e:Grey}, and $\nu_t$ be the L\'evy jump measure associated with $Z_t$.
    Then $\nu$ is a weak solution of the damped Smoluchowski equations~\eqref{e:SmolSubC}.
  \end{corollary}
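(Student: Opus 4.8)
The plan is to deduce Corollary~\ref{c:CSBPSmol} directly from Corollary~\ref{clyLevyTripConv} by exhibiting the given CSBP $Z$ as the scaling limit of a suitable sequence of critical Galton-Watson processes. Since $Z$ is a finite (sub)critical CSBP, its branching mechanism has the form \eqref{e:Psi} for a L\'evy triple $(a_0, a_\infty, \mu)$; equivalently, setting $d\kappa = a_0\,d\delta_0 + a_\infty\,d\delta_\infty + (x\wedge 1)\,d\mu$ as in \eqref{e:KappaMu}, $\kappa$ is a finite measure on $[0,\infty]$. The first step is therefore to construct, for each $k$, a critical reproduction law $\hat\pi_k$ on $\nplus$ and scaling constants $h_k,\tau_k\to 0$ such that the rescaled measures $\scalek\mu$ defined in \eqref{defMuht} L\'evy-converge to $\kappa$. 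This is precisely the content of the universality/approximation construction promised in Section~\ref{s:Univ} (referenced in the remark after Proposition~\ref{p:BrMechConv}), so I would either invoke that construction or give a direct ad hoc one: discretize $\mu$ on the grid $h_k\N$, truncate its small-$x$ and large-$x$ tails appropriately, assign the bulk mass to produce the $(x\wedge 1)\,d\mu$ part, route a vanishing-but-heavy tail to $\infty$ to produce the atom $a_\infty\,d\delta_\infty$, and route a concentrating-near-$0$ piece to produce $a_0\,d\delta_0$; then renormalize the $\hat\pi_k$'s so that the criticality condition \eqref{e:Criticality} holds exactly (this only requires adjusting $\hat\pi_k(0)$ and $\hat\pi_k(1)$, which do not affect $\hat\mu_k$ in \eqref{defMuHat}).

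Once such a sequence is in hand, the hypotheses of Proposition~\ref{p:BrMechConv}(i) and hence Proposition~\ref{p:BTConv}(i) are met, so $\suphk\varphi_n(q)\to\varphi(q,t)$ where $\varphi$ solves \eqref{e:PhiEvol} with the limiting branching mechanism $\Psi$ given by \eqref{e:PsiLim}. By construction $\Psi$ is exactly the branching mechanism of $Z$, so $\varphi$ is the spatial Laplace exponent of $Z$ and, by \eqref{e:PhiBern1}, the measure $\nu_t$ appearing there is exactly the L\'evy jump measure of $Z_t$. Now apply Corollary~\ref{clyLevyTripConv}: since by hypothesis $\Psi$ satisfies Grey's condition \eqref{e:Grey}, the corollary gives $\ip{1}{\nu_t}<\infty$ and asserts that the family $(\nu_t)_{t\ge 0}$ is a weak solution of the generalized damped Smoluchowski equation \eqref{e:SmolSubC} for all test functions $f\in C([0,\infty])$, with $R_k$ and $I_k$ as in \eqref{e:RkPsi} and \eqref{e:IkDef}. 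This is precisely the assertion of the corollary.

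The main obstacle is the approximation step: producing a sequence of \emph{critical} discrete reproduction laws whose rescaled L\'evy measures converge, in the strong sense of Definition~\ref{d:Lconv} (weak-$\star$ convergence of $(x\wedge 1)\,d\scalek\mu$ on the compactified half-line $[0,\infty]$), to a prescribed $\kappa$ that may have atoms at both $0$ and $\infty$. The atom at $\infty$ is the delicate feature, since it forces the limit to be subcritical even though every approximant is critical; handling it requires carefully balancing a vanishingly small probability assigned to enormous families against the $(j-1)\hat\pi_k(j)$ weighting in \eqref{defMuHat} and the $1/\tau_k$ rescaling, so that the mass escapes to $\infty$ at exactly the right rate. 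Everything downstream is then a direct citation of the already-established Propositions~\ref{p:BrMechConv}, \ref{p:BTConv} and Corollary~\ref{clyLevyTripConv}. (If the Section~\ref{s:Univ} machinery is allowed to be cited here, the obstacle disappears and the proof collapses to two sentences.)
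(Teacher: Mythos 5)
Your proposal is correct and follows essentially the same route as the paper: exhibit $Z$ as a scaling limit of critical Galton--Watson processes and then invoke Corollary~\ref{clyLevyTripConv} under Grey's condition~\eqref{e:Grey}. The only difference is that the paper dispenses with your ``main obstacle'' by citation---the existence of an approximating sequence is quoted as well known (Grimvall, Li) and strengthened later in Theorem~\ref{t:UniversalGW}, whose proof carries out precisely the discretization of $\mu$ with criticality enforced through $\hat\pi(0)$ and $\hat\pi(1)$ that you sketch.
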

  \begin{proof}
    We note first that there exists a sequence of processes $Y^{(k)}$ of the form~\eqref{e:YkDef} such that the finite-dimensional distributions of $Y^{(k)}$ converge to $Z$.
    While the existence of such a sequence is well known~\cite{Grimvall74,Li00} we will prove a much stronger result in Theorem~\ref{t:UniversalGW}, below.
    By Proposition~\ref{p:GWConv} we know that the Bernstein transforms of $Y^{(k)}$ converge pointwise to the Laplace exponent of $Z$.
    Consequently, by Corollary~\ref{clyLevyTripConv} the convergence~\eqref{e:NukLim} holds, where $\nu_t$ is the L\'evy jump measure of $Z_t$, and further~$\nu$ satisfies~\eqref{e:SmolSubC} as desired.
  \end{proof}

  \section{Comparison to general convergence criteria.}\label{s:Grimvall}

  Necessary and sufficient criteria for convergence of scaling limits of Galton-Watson processes in general were developed by Grimvall~\cite{Grimvall74}, and recently by Bansaye and Simatos~\cite{BansayeSimatos15}.
  Grimvall's original criteria involved convolution powers of  family-size distributions, shifted and scaled.  
  The (equivalent) convergence criteria of Bansaye and Simatos~\cite{BansayeSimatos15}, obtained in connection with their more general investigation of branching in time-varying environments, are simpler and only involve convergence of moments and tails of the family-size distribution.
  We present these criteria here and show that for critical Galton-Watson processes the Bansaye-Simatos criteria are equivalent to the L\'evy-convergence criterion in Proposition~\ref{p:GWConv}.

  \subsection{The Grimvall and Bansaye-Simatos convergence criteria}
  We begin by stating Grimvall's result using notation consistent with ours.
  \begin{theorem}[Grimvall~\cite{Grimvall74}, Theorems~3.1 and~3.3]\label{t:Grimvall}
Let $(h_k)$, $(\tau_k)$ be positive sequences converging to 0, 
such that $1 / (h_k \tau_k) \in \N$ for all $k$.
    Let $(\eta_k)$ be a sequence of probability measures of the form
    \begin{equation*}
      \eta_k
	= 
	  \sum_{j \geq -1} \hat \pi_k(j+1) \delta_{j h_k}\,.
    \end{equation*}
    \begin{enumerate}
      \item
	If $\eta_k^{*1 / \paren{ h_k \tau_k } }$ converges weakly to a probability measure $\eta$, then the finite-dimensional distributions of the scaled processes $Y^{(k)}$ defined in \eqref{e:YkDef} converge to those of a (possibly infinite) CSBP with an absorbing state $+\infty$.
      \item
	Conversely, if for every $t \in [0, 1]$ the random variables $Y^{(k)}_t$ converge weakly to a random variable $Y_t$, with $\P\set{Y_1 > 0} > 0$, then $\eta_k^{*1 / \paren{ h_k \tau_k } }$ converges weakly to a probability measure $\eta$.
    \end{enumerate}
  \end{theorem}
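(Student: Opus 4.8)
The plan is to reduce the whole statement, exactly as was done for the conservative critical case in Section~\ref{s:Scaling}, to a single equivalence between weak convergence of the integer convolution powers $\eta_k^{*1/(h_k\tau_k)}$ and pointwise convergence of the rescaled discrete branching mechanisms, and then to feed the latter into the forward-Euler machinery already developed. Write $G_k$ for the generating function of $\hat\pi_k$. By the iteration~\eqref{e:Giterate} and the rescaling of Section~\ref{s:Scaling}, the Bernstein transforms $\varphi_k(q,n\tau_k)$ of the rescaled $n$-step descendant distributions obey the forward-Euler recursion for $\partial_t\varphi=-\scalek\Psi(\varphi)$ started from $\varphi_k(q,0)=(1-e^{-qh_k})/h_k$, where $\scalek\Psi(q)=(h_k\tau_k)^{-1}\bigl(G_k(1-h_kq)-1+h_kq\bigr)$, and the branching property gives
\[
  \E\bigl(e^{-qY^{(k)}_{t}(x)}\bigr)=\bigl(1-h_k\varphi_k(q,\lfloor t/\tau_k\rfloor\tau_k)\bigr)^{\lfloor x/h_k\rfloor}.
\]
As in the proofs of Propositions~\ref{p:BTConv} and~\ref{p:GWConv} (and the argument of~\cite[page~280]{Lamperti67a} for higher-dimensional marginals), convergence of the finite-dimensional distributions of $Y^{(k)}$ to those of a $[0,\infty]$-valued CSBP with $+\infty$ absorbing is then \emph{equivalent} to pointwise convergence $\varphi_k(q,t)\to\varphi(q,t)$ on $(0,\infty)\times[0,\infty)$, the value $+\infty$ being read off from $\lim_{q\downarrow0}\varphi(q,t)$. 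So both halves of the theorem follow once I establish: (a) $\eta_k^{*1/(h_k\tau_k)}$ converges weakly $\iff$ $\scalek\Psi(q)\to\Psi(q)$ for all $q>0$, with the negative logarithm of the limiting Laplace transform equal to $\Psi$; and (b) the forward-Euler convergence of Proposition~\ref{p:BTConv} and its converse, adapted to drop criticality.

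For (a), note $\eta_k$ is the law of $h_k(\xi-1)$ with $\xi\sim\hat\pi_k$, so $\widehat{\eta_k}(\theta)=e^{-i\theta h_k}G_k(e^{i\theta h_k})$ and
\[
  \widehat{\eta_k^{*1/(h_k\tau_k)}}(\theta)=\exp\!\Bigl(\tfrac1{h_k\tau_k}\bigl(\log G_k(e^{i\theta h_k})-i\theta h_k\bigr)\Bigr).
\]
The triangular array $\{h_k(\xi_{i,k}-1):1\le i\le 1/(h_k\tau_k)\}$ is a null array, so by the Gnedenko--Kolmogorov limit theory its row sums converge weakly iff the truncated means, truncated variances, and tail measures of $\eta_k$ blown up by $1/(h_k\tau_k)$ all converge, in which case the limit is infinitely divisible; since $\eta_k$ is carried by $[-h_k,\infty)$ the limiting Lévy measure sits on $[0,\infty)$ and the exponent has exactly the branching-mechanism form~\eqref{e:PsiLim}, now without the criticality restriction—so with a drift of either sign coming from $(\bar m_k-1)/\tau_k$ ($\bar m_k$ the mean of $\hat\pi_k$), and with an atom at $+\infty$ (equivalently, escape of mass) when the large-jump part has infinite mass. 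The key point is that this is the \emph{same} Lévy data that governs $\scalek\Psi$: replacing $e^{-h_kq}$ by $1-h_kq$ costs $O(h_k^2q^2)$, $\log(1+u)-u=O(u^2)$, and the leftover $\tfrac{h_k}{\tau_k}$-weighted second-moment term is precisely the mass of $(x\wedge1)\,d\scalek\mu$ near the origin, which produces the quadratic coefficient $\tfrac12\alpha_0q^2$ in the limit—this is the ``reappearance of the quadratic (and linear) terms'' remarked on after Proposition~\ref{p:BrMechConv}, and after the size-biasing $x\,d\pi=d\mu$ the array's Lévy measure is exactly the limit of $\scalek\mu$. Hence, by Proposition~\ref{p:BrMechConv}, $\eta_k^{*1/(h_k\tau_k)}\to\eta$ weakly iff $\scalek\mu$ Lévy-converges to some finite $\kappa$ on $[0,\infty]$ iff $\scalek\Psi\to\Psi$ pointwise.

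Granting (a), part~(1) follows by running the discrete-Gronwall estimate of Proposition~\ref{p:BTConv}(i)—which used only the sign and convexity of $\Psi'$, not criticality—on any time interval on which the solution $\varphi$ of $\partial_t\varphi=-\Psi(\varphi)$, $\varphi(q,0)=q$, stays finite, while tracking the absorbing state $+\infty$ through $\varphi(0^+,t)$; this yields finite-dimensional convergence of $Y^{(k)}$ to the CSBP with branching mechanism $\Psi$. For the converse~(2): convergence of the one-dimensional laws on $[0,1]$ forces $\varphi_k(q,t)\to\varphi(q,t)$, and $\P\{Y_1>0\}>0$ gives $\varphi(q,1)>0$ for $q>0$; then, exactly as in the proof of Proposition~\ref{p:BTConv}(ii), monotonicity of $n\mapsto\varphi_k(q,n\tau_k)$ together with convexity of $\scalek\Psi$ on $[0,1/h_k]$ bounds $\{\scalek\Psi(\varphi(q,1)/2)\}$, so the associated measures $\{\kappa_k\}$ are weak-$\star$ precompact; any subsequential limit is pinned down by $\varphi$, hence unique, so $\scalek\Psi$ converges and, by (a), $\eta_k^{*1/(h_k\tau_k)}$ converges weakly.

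The hard part will be step (a)—certifying that weak convergence of the integer convolution power carries exactly the information in the Lévy-convergence of $\scalek\mu$. The delicate bookkeeping is in locating the Gaussian coefficient $\alpha_0$ and the extra linear coefficient $\alpha_\infty$ of the limiting branching mechanism: neither is visible termwise in $\hat\Psi_k$; the first emerges from the $h_k/\tau_k$-weighted small-jump contribution, the second from mass escaping to $+\infty$, and controlling the truncations in the null-array analysis uniformly in $k$ is where the real work sits. A secondary obstacle, absent from the body of the paper, is that Grimvall's limit need not be a conservative (sub)critical CSBP: one must allow it to be supercritical and to explode, which is why the absorbing state $+\infty$ has to be carried along (via $\varphi(0^+,t)$) throughout the forward-Euler argument.
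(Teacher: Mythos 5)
First, note that the paper does not prove this statement at all: Theorem~\ref{t:Grimvall} is quoted from Grimvall~\cite{Grimvall74} (Theorems~3.1 and~3.3) and is used only for comparison with the paper's own criteria, so there is no in-paper proof to match your argument against; it must stand on its own. As it stands, it has a genuine gap at its central step (a). You claim that weak convergence of $\eta_k^{*1/(h_k\tau_k)}$ is equivalent to L\'evy-convergence of $\breve\mu_k$ on $[0,\infty]$ and hence, ``by Proposition~\ref{p:BrMechConv}'', to pointwise convergence of $\breve\Psi_k$. But Proposition~\ref{p:BrMechConv}, Definition~\ref{d:Lconv}, and the whole Bernstein/L\'evy-triple framework of the paper are built on the criticality assumption~\eqref{e:Criticality}, which Grimvall's theorem does not impose. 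Without criticality the rescaled branching mechanism acquires the extra term $(1-\Xi_k)q/\tau_k$ with $\Xi_k$ the mean of $\hat\pi_k$, and convergence of the convolution powers requires, in addition to control of the jump part, convergence of this centering term, which may tend to a limit of \emph{either} sign (supercritical limits have $\Psi'(0^+)<0$). A nonnegative drift coefficient $a_\infty$ in a L\'evy triple on $[0,\infty]$ cannot encode this, and the measure $\breve\mu_k$ (built only from $(j-1)\hat\pi_k(j)$, $j\ge 2$) cannot see the mean defect once the identity $\hat\pi_k(0)=\sum_{j\ge2}(j-1)\hat\pi_k(j)$ is lost. So the asserted three-way equivalence is false in the generality of Grimvall's statement; the correct general criterion is the Feller/Gnedenko--Kolmogorov one with a signed drift and a canonical measure (this is exactly why the paper's Proposition~\ref{p:GrimvallEquiv} claims equivalence with L\'evy-convergence \emph{only in the critical case}).

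The same criticality dependence undermines the steps you describe as routine adaptations. The forward-Euler estimate in Proposition~\ref{p:BTConv}(i) uses $\Psi\ge0$, $\varphi(q,s)\le q$, monotone decrease of $s\mapsto\Psi(\varphi(q,s))$, and $\Psi'$ positive and increasing on $[0,q]$; in the supercritical case $\Psi<0$ near $0$, $\varphi(q,\cdot)$ can increase, and these bounds must be replaced by a genuinely different (local Lipschitz, two-sided) argument, including the non-conservative case where the limit lives on $[0,\infty]$ with $+\infty$ absorbing. Likewise your converse step (2) leans on the monotonicity $\varphi_k(q,m\tau_k)\ge\varphi_k(q,n\tau_k)$ for $m\le n$ and on the positivity of the integrand in~\eqref{e:KappaKBd}; both come from $G_k(s)\ge s$ on $[0,1]$, i.e.\ from (sub)criticality, and a bound on $\breve\Psi_k$ at one point no longer yields tightness of the canonical measures, nor does it control the signed drift $(\Xi_k-1)/\tau_k$. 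To repair the proposal you would have to replace (a) by the classical null-array analysis with truncated means and variances (tracking the Gaussian coefficient and the signed drift uniformly in $k$), and redo the Euler-scheme and compactness arguments for general, possibly supercritical and explosive, branching mechanisms; none of this is supplied by the cited propositions of the paper.
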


\begin{remark}\label{r:DiscLamperti}
  The reason 
the shifted measures $\eta_n$ are natural in this context is because they determine the law of the parent process of the Lamperti transform.
At the discrete level before scaling, this can be described as follows.
Let $\xi$ be a random variable with distribution $\hat \pi$, representing the number of descendants of a single individual.
Define the \emph{parent process}, $\bar X$, to be a random walk obtained by summing i.i.d.\ copies of $\xi - 1$, and a time change $\Theta$ defined recursively by
\begin{equation*}
  \Theta_0 = 0 \,,
  \quad
  \Theta_{n+1} \defeq \Theta_n + x + \bar X_{\Theta_n}\,.
\end{equation*}
The Lamperti transformation defines a process $X$ by
\begin{equation*}
  X_n \defeq x + \bar X_{\Theta_n}\,.
\end{equation*}
Observe
\begin{equation*}
  X_{n+1} - X_n = \bar X_{\Theta_{n+1}} - \bar X_{\Theta_n}\,,
\end{equation*}
and hence the increment $X_{n+1} - X_n$ has the same distribution as the sum of $X_n$ independent copies of $\xi - 1$ that are each independent of $X_n$.
Equivalently, $X_{n+1}$ has the same law as the sum of $X_n$ i.i.d.\ copies of $\xi$, that are each independent of $X_n$.
This implies $X_n$ is a Galton-Watson process with initial population $x$ and descendant distribution $\hat \pi$.
(This is the discrete analog of the Lamperti transform described in Section~\ref{s:CSBPintro}.)

Grimvall's convergence criterion stated above implies that the sequence of the parent processes $\bar X\supk$ generated by $\hat \pi_k$ converge weakly, after scaling, to a L\'evy process.
The limiting parent process will be the parent process of the CSBP as given by the Lamperti transform.
\end{remark}

In their investigation of scaling limits for Galton-Watson processes in varying environments, Bansaye and Simatos \cite{BansayeSimatos15} identified the following simplified criteria as equivalent to Grimvall's for the convergences in Theorem~\ref{t:Grimvall}. 
In the present situation, the criteria of Bansaye and Simatos 
(stated as Assumption A.1 in Appendix A of \cite{BansayeSimatos15}) may be stated as follows.

\begin{proposition}[Bansaye-Simatos~\cite{BansayeSimatos15}]\label{p:BS}
  The finite-dimensional distributions of the scaled processes $Y^{(k)}$ defined in \eqref{e:YkDef} converge to those of a (possibly infinite) CSBP with an absorbing state $+\infty$ if and only if there exists $\hat a\in\R$, $\hat b\ge0$ and a positive $\sigma$-finite measure $F$ on $(0,\infty)$ such that as $k\to\infty$,
  \begin{equation}\label{c:BS1us}
      \int_\R  \frac{x}{1+x^2}\,
  \frac{ d\eta_k(x)}{h_k\tau_k} \to \hat a \,, 
  \qquad
      \int_\R  \frac{x^2}{1+x^2}\, 
  \frac{ d\eta_k(x)}{h_k\tau_k} 
  \to \hat b \,.
  \end{equation}
  and 
  \begin{equation}\label{c:BS2us}
  \int_{[x,\infty)} 
  \frac{ d\eta_k(x)}{h_k\tau_k} 
  \to F([x,\infty)) \quad\mbox{for a.e. $x>0$.}
  \end{equation}
\end{proposition}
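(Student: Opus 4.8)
The plan is to obtain the statement by combining Grimvall's Theorem~\ref{t:Grimvall} with the classical convergence criterion for convolution powers of a triangular array. By Theorem~\ref{t:Grimvall}, the finite-dimensional distributions of the scaled processes $Y^{(k)}$ from \eqref{e:YkDef} converge to those of a (possibly infinite) CSBP with absorbing state $+\infty$ if and only if the $n_k$-fold convolutions $\eta_k^{*n_k}$, where $n_k\defeq 1/(h_k\tau_k)\in\N$, converge weakly to a probability measure $\eta$ on $\R$; by Remark~\ref{r:DiscLamperti}, this $\eta$ is the time-$1$ law of the L\'evy parent process of the limiting CSBP. So everything reduces to showing that weak convergence of $(\eta_k^{*n_k})$ is equivalent to the pair of conditions \eqref{c:BS1us}--\eqref{c:BS2us}; this is in essence the specialization of \cite[Assumption~A.1, Appendix~A]{BansayeSimatos15} to the time-homogeneous setting.

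First I would verify that $(\eta_k)$ with multiplicities $(n_k)$ forms a null array. Since $\eta_k$ is supported on $\set{jh_k\st j\ge-1}$, its only negative atom sits at $-h_k\to0$, and \eqref{c:BS2us} keeps $n_k\eta_k([\eps,\infty))$ bounded for each fixed $\eps>0$; monotonicity then forces $\eta_k(\set{\abs x>\eps})\to0$, i.e.\ $\eta_k$ converges weakly to $\delta_0$. With this in hand, I would invoke the classical limit theorem for row-wise i.i.d.\ triangular arrays (see e.g.\ \cite[Ch.~XVII]{Feller71}): for a null array, $\eta_k^{*n_k}$ converges weakly to the infinitely divisible law with L\'evy--Khintchine characteristics $(\hat a,\hat b_0,F)$, taken relative to the truncation function $x\mapsto x/(1+x^2)$, if and only if $n_k\int_\R \tfrac{x}{1+x^2}\,d\eta_k\to\hat a$, $n_k\int_\R \tfrac{x^2}{1+x^2}\,d\eta_k\to\hat b$, and $n_k\eta_k\to F$ vaguely on $\R\setminus\set0$. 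Matching notation, the vague-convergence statement is exactly the tail condition \eqref{c:BS2us} (it holds at all but the at-most-countably-many discontinuities of $x\mapsto F([x,\infty))$, hence for a.e.\ $x$ as written), and the middle limit splits as $\hat b = \hat b_0 + \int_{(0,\infty)} \tfrac{x^2}{1+x^2}\,dF(x)$, absorbing both the Gaussian coefficient and the small-jump contribution of $F$ into the single scalar appearing in \eqref{c:BS1us}.

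Two small points deserve explicit mention. First, no centering correction is needed in the $\hat a$-limit: because $n_k\to\infty$, the usual adjustment term $n_k\bigl(\int_\R\tfrac{x}{1+x^2}\,d\eta_k\bigr)^2 = n_k^{-1}\bigl(n_k\int_\R\tfrac{x}{1+x^2}\,d\eta_k\bigr)^2$ tends to $0$, so the quadratic and linear conditions decouple as stated. Second, since $n_k\eta_k((-\infty,-\eps])$ vanishes identically once $h_k<\eps$, the limiting L\'evy measure $F$ is carried by $(0,\infty)$, consistent with the parent process of a CSBP being spectrally positive; this is also why only the one-sided tails $F([x,\infty))$ appear in \eqref{c:BS2us}.

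I expect the main obstacle to be bookkeeping rather than analysis: one must carefully reconcile three sets of conventions --- Grimvall's shifted measures $\eta_k$, the Bansaye--Simatos truncation $x/(1+x^2)$, and the L\'evy-triple normalization used elsewhere in this paper (truncation $\one_{\set{x<1}}$, measure $\mu$ with $x\,d\pi=d\mu$) --- and check that the hypotheses of the classical theorem (the null-array property, and in the forward direction the tightness of $(\eta_k^{*n_k})$) are genuinely delivered by \eqref{c:BS1us}--\eqref{c:BS2us} rather than needing to be assumed separately. Once the dictionary is set up correctly the equivalence is immediate, and the proposition follows.
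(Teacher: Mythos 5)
Your route is sound and is essentially what the paper itself has in mind: Proposition~\ref{p:BS} is quoted from \cite{BansayeSimatos15} and is not proved in the paper, whose only comment is the remark immediately following the statement that the equivalence of \eqref{c:BS1us}--\eqref{c:BS2us} with Grimvall's criterion in Theorem~\ref{t:Grimvall}(1) can be established directly from the classical theory of infinite divisibility and canonical measures in \cite{Feller71} --- precisely the Grimvall-plus-i.i.d.-triangular-array argument you sketch. The bookkeeping points you flag (the $x/(1+x^2)$ truncation versus Feller's centering, the null-array property, and the nondegeneracy hypothesis $\P\set{Y_1>0}>0$ in the converse part of Theorem~\ref{t:Grimvall}) are indeed the only places where care is needed, and they go through as you indicate.
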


We remark that one can directly prove the equivalence of the criteria
\eqref{c:BS1us}-\eqref{c:BS2us} and Grimvall's convergence criterion in
Theorem~\ref{t:Grimvall}(1) using the 
classical theory of infinite divisibility and canonical measures
in \cite{Feller71}.

%
%
  
\hide{ 
  Our aim in this subsection is to show that Grimvall's criterion can be expressed in terms of simple weak convergence criteria on $[0,\infty)$ that do not involve convolution powers of $\eta_k$.  Toward this end, we introduce 
the (overloaded) notation 
\begin{equation}\label{defHatPimeasure}
  d\hat\pi_k(x) = \sum_{j\geq0} \hat\pi_k(j)\,d\delta_j(x) 
\end{equation}
to denote the probability measures on $\R$ 
determined by the distributions $\hat\pi_k$  on $\nplus$.
Then  $d\eta_k(x) = d\hat\pi_k((x+h_k)/h_k)$. 
We recall that a sequence of finite measures $(\lambda_k)$ on an interval $I\subset\R$ converges \emph{narrowly} to $\lambda$ on $I$ if $\ip{f}{\lambda_k}\to\ip{f}{\lambda}$
for every $f\in C_b(I)$, the space of bounded continuous functions on $I$.

  \begin{proposition}\label{p:GrimvallSimp}
    The sequence
    $(\eta_k^{* 1/(h_k \tau_k)})$ converges weakly to a 
    probability measure $\eta$ on $\R$ if and only if, as $k\to\infty$,
    \begin{equation}\label{e:SinConv}
  b_k\defeq  \int_{[0,\infty)} \sin(x-h_k) \,\frac{d\hat\pi_k\left( x/{h_k}\right) }{\tau_kh_k} \to b\,,
    \end{equation}
    and
    \begin{equation} \label{e:PiKConv}
    ((x-h_k)^2\varmin1)\,\frac{d\hat\pi_k\left( x/{h_k}\right) }{\tau_kh_k}\to K
    \end{equation}
    narrowly on $[0,\infty)$, 
where $b\in\R$ and $K$ is a finite measure supported on $[0,\infty)$, 
    such that 
\begin{equation}\label{e:EtaLK}
\int_\R e^{izx}d\eta(x) = \exp\left( ibz+ \int_{\R} 
\frac{e^{izx}-1-iz\sin x}{x^2\wedge 1}\,dK(x) \right)
\end{equation}
is the L\'evy-Khintchine representation of the infinitely divisible distribution $\eta$. 
  \end{proposition}
  
  \begin{remark}
    We emphasize that neither Theorem~\ref{t:Grimvall} nor Proposition~\ref{p:GrimvallSimp} require the criticality assumption~\eqref{e:Criticality}.
    While~\eqref{e:Criticality} is implicitly assumed throughout the rest of this paper, it is not required in the proof below.
  \end{remark}

  \begin{proof}
    We know~\cite[\S XVII.1, Thm 1]{Feller71} that $(\eta_k^{*1/(h_k \tau_k)})$ converges weakly to a probability measure $\eta$ if and only if
    \begin{equation*}
      \lim_{k \to \infty}
	\frac1{h_k \tau_k} \int_\R (e^{i x \xi} - 1 ) \, d\eta_k(x)
    \end{equation*}
exists for all $\xi \in \R$ and is continuous in $\xi$.
    This is equivalent (see~\cite[\S XVII.2, Lemma~1 and Thm.~2]{Feller71}) to the existence of $b \in \R$ and a canonical measure $M$ such that, as $k\to\infty$,
    \begin{gather}
      \label{e:b}
      \int_\R \sin x \, \frac{d\eta_k(x)}{h_k \tau_k}
      \to b\,,
      \\
      \label{e:M}
      (x^2 \varmin 1) \,\frac{d\eta_k(x)}{h_k \tau_k}
      \to 
      \paren[\Big]{1 \varmin \frac{1}{x^2} } dM(x)\,,
    \end{gather}
narrowly on $\R$. 
We recall that a canonical measure $M$ is a positive measure on $\R$ for which the measure $K$ defined by $dK(x) = (1 \varmin x^{-2}) \, dM(x)$ is finite, and note
that Feller's definition of convergence of canonical measures $M_n\to M$ is equivalent
to the convergence $(1\varmin x^{-2})M_n(dx) \to K$ narrowly on $\R$.

Now,  clearly 
    \[
     \int_\R \sin x \, \frac{d\eta_k(x)}{h_k \tau_k} = 
         \int_{[0,\infty)} \sin(x-h_k) \,\frac{d\hat\pi_k\left( x/{h_k}\right) }{\tau_kh_k} 
         =b_k\,,
         \]
so \eqref{e:b} is equivalent to \eqref{e:SinConv}. 
Next,  suppose~\eqref{e:b} and~\eqref{e:M} hold. 
Let $f \in C_b(\R)$ be a bounded continuous test function and observe
    \begin{equation}\label{e:MuKId}
	\int_\R 
	  f(x) \paren{ x^2 \varmin 1} \, \frac{d\eta_k(x)}{h_k \tau_k}
      \\
      =
      \int_{[0,\infty)} f(x-h_k) 
       ((x-h_k)^2\varmin1)\,\frac{d\hat\pi_k\left( x/{h_k}\right) }{\tau_kh_k}\,.
   \end{equation}
    We need to show that translating $f$ by $h_k$ does not affect the limit of the right hand side.
%
%
Choose $\epsilon > 0$ arbitrary and find $R > 2$ so that $K (R, \infty) < \epsilon$.
    Without loss of generality we may assume $R$ is a point of continuity for the measure~$K$.
    Since $f$ is uniformly continuous on $[0, R]$, for $k$ large enough we can arrange $\abs{f(x) - f(x-h_k)} < \epsilon$ for all $x \in [0, R]$.

    Writing 
    \[d\lambda_k(x) =  ((x-h_k)^2\varmin1)\,d\hat\pi_k(x/h_k)/(\tau_kh_k)\,,
    \] we have
    \begin{align*}
    \int_{[0,\infty)} |f(x)-f(x-h_k)|\,d\lambda_k(x)
 & \leq \int_{[0,R]} \epsilon\,d\lambda_k(x) + 
    2\|f\|_{L^\infty} \int_{(R,\infty)} d\lambda_k(x)
      \\
      &\xrightarrow{k \to \infty}
      	\epsilon K [0, R] + 2 \norm{f}_{L^\infty} K(R,\infty) < C\epsilon\,.
    \end{align*}
    Thus, on the right hand side of~\eqref{e:MuKId}, the test function $f$ can be translated by $h_k$, which immediately implies~\eqref{e:PiKConv}.


        The converse follows using a similar argument.
  \end{proof}

  \begin{remark}
      In our proof we showed that test functions can be translated by $h_k$ when tested against the measure $d\lambda_k(x)$.
    The weight $(x - h_k)^2 \varmin 1$ however can not be translated similarly!
  \end{remark}
} 

  \subsection{Equivalence to L\'evy-convergence in the critical case}

The main result of this section shows that under the criticality assumption~\eqref{e:Criticality}, the conditions~\eqref{c:BS1us} and~\eqref{c:BS2us} are equivalent to the L\'evy-convergence (Definition~\ref{d:Lconv}) of the rescaled L\'evy measures~$\scalek \mu$ determined from
family size distributions $\hat\pi_k$ as in \eqref{defMuht}.
What makes this a curious point is that 
L\'evy-convergence occurs only on the compactified positive half-line $[0,\infty]$, 
while the measures $\eta_k$ also have support on the negative half-line.
  \begin{proposition}\label{p:GrimvallEquiv}
    Let $(\hat \pi_k)$ be a sequence of measures satisfying the criticality assumption~\eqref{e:Criticality},
    $(h_k)$, $(\tau_k)$ be two positive sequences that converge to~$0$, and let $\scalek\mu$ be defined by~\eqref{defMuht}.
Then~\eqref{c:BS1us} and~\eqref{c:BS2us} are equivalent to the existence of a finite measure $\kappa$ on $[0,\infty]$ such that $(\scalek \mu)$ L\'evy-converges to $\kappa$.
  \end{proposition}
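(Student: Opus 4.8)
The plan is to prove the equivalence by a direct computation that matches the three Bansaye--Simatos quantities with integrals of simple test functions against the measures $\kappa_k \defeq (x\wedge 1)\,d\scalek\mu$, whose weak-$\star$ convergence on $[0,\infty]$ is by definition the asserted L\'evy-convergence. Two ingredients drive everything. First, an elementary bijection between the positive parts of the two families: writing $\eta_k = \hat\pi_k(0)\,\delta_{-h_k}+\sum_{j\ge 1}\hat\pi_k(j)\,\delta_{(j-1)h_k}$, the only negative atom of $\eta_k$ sits at $-h_k$, and since by~\eqref{defMuht} the $\scalek\mu$-atom at $y=jh_k$ has mass $(j-1)\hat\pi_k(j)/\tau_k = \tfrac{y-h_k}{h_k\tau_k}\bigl(\eta_k\text{-mass at }y-h_k\bigr)$, we get for every bounded measurable $g$,
\[
  \int_{(0,\infty)} g(u)\,\frac{d\eta_k(u)}{h_k\tau_k}
    = \int_{[2h_k,\infty)} \frac{g(y-h_k)}{y-h_k}\,d\scalek\mu(y).
\]
Second, the criticality assumption~\eqref{e:Criticality} is used \emph{only} through the scalar identity $\hat\pi_k(0)/\tau_k = \scalek\mu((0,\infty)) = \int_{(0,\infty)} u\,\tfrac{d\eta_k(u)}{h_k\tau_k}$, i.e.\ $\eta_k$ has mean zero; this is precisely what lets the contribution of the atom at $-h_k$ be re-expressed in terms of $\scalek\mu$ and absorbed as $h_k\to0$.

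Combining these, a short manipulation gives
\begin{align*}
  \int_\R \frac{x}{1+x^2}\,\frac{d\eta_k(x)}{h_k\tau_k}
    &= \int_{[2h_k,\infty)}\Bigl(\tfrac{1}{1+(y-h_k)^2}-\tfrac{1}{1+h_k^2}\Bigr)d\scalek\mu(y)
    = \int \tilde G_k\,d\kappa_k ,\\
  \int_\R \frac{x^2}{1+x^2}\,\frac{d\eta_k(x)}{h_k\tau_k}
    &= \int_{[2h_k,\infty)}\Bigl(\tfrac{y-h_k}{1+(y-h_k)^2}+\tfrac{h_k}{1+h_k^2}\Bigr)d\scalek\mu(y)
    = \int L_k\,d\kappa_k ,
\end{align*}
with $\tilde G_k(y) = \tfrac{1}{y\wedge 1}\bigl(\tfrac{1}{1+(y-h_k)^2}-\tfrac{1}{1+h_k^2}\bigr)$ and $L_k(y) = \tfrac{1}{y\wedge 1}\bigl(\tfrac{y-h_k}{1+(y-h_k)^2}+\tfrac{h_k}{1+h_k^2}\bigr)$, and (once $2h_k<x$) $\int_{[x,\infty)}\tfrac{d\eta_k}{h_k\tau_k} = \int_{[x+h_k,\infty)}\tfrac{1}{(y-h_k)(y\wedge1)}\,d\kappa_k(y)$. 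The key point — and the reason these particular combinations appear — is that $\tilde G_k$ and $L_k$ converge \emph{uniformly on $[2h_k,\infty]$} to the functions $\tilde G(y) = -\tfrac{y^2}{(y\wedge1)(1+y^2)}$ and $L(y) = \tfrac{y}{(y\wedge1)(1+y^2)}$, which are continuous and bounded on $[0,\infty]$ and vanish at $0$: the apparent $1/y$ blow-up at the origin cancels precisely because of the pairing forced by criticality. This is verified by routine estimates ($\abs{\tilde G_k-\tilde G}\le Ch_k$, $\abs{L_k-L}\le Ch_k$ for $y$ bounded away from $0$, and $\abs{\tilde G_k(y)}+\abs{\tilde G(y)}\le Cy$, $\abs{L_k(y)-L(y)}\le Cy$ near $0$), in the spirit of~\eqref{e:FkMinusF0}. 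Granting this, the forward implication is immediate: if $(\scalek\mu)$ L\'evy-converges then $\kappa_k\to\kappa$ weak-$\star$ and $\kappa_k([0,\infty])$ is bounded, so $\int\tilde G_k\,d\kappa_k\to\int\tilde G\,d\kappa=:\hat a\le0$ and $\int L_k\,d\kappa_k\to\int L\,d\kappa=:\hat b\ge0$ give~\eqref{c:BS1us}, while a standard weak-$\star$ argument at continuity points of $\kappa$ gives $\int_{[x,\infty)}\tfrac{d\eta_k}{h_k\tau_k}\to\int_{[x,\infty)} y^{-1}d\mu(y)=:F([x,\infty))$ for a.e.\ $x>0$, which is~\eqref{c:BS2us} with $F$ the $\sigma$-finite measure $y^{-1}d\mu$ and $(\alpha_0,\alpha_\infty,\mu)$ the L\'evy triple of $\kappa$ from~\eqref{e:KappaMu}. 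One also reads off $\hat a=-\alpha_\infty-\int\tfrac{y^2}{1+y^2}d\mu$ and $\hat b=\alpha_0+\int\tfrac{y}{1+y^2}d\mu$.

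For the converse I would argue in two steps. Step~1, which is the main obstacle: \eqref{c:BS1us} together with criticality forces $(\kappa_k)$ to be weak-$\star$ precompact on $[0,\infty]$, i.e.\ $\sup_k\int(x\wedge1)\,d\scalek\mu<\infty$. This is genuinely delicate because $\scalek\mu((0,\infty))$ may diverge, so one must rule out mass of $\scalek\mu$ escaping to $+\infty$. The route is to use criticality once more: the identity of the first paragraph yields
\[
  \int_\R\frac{x}{1+x^2}\,\frac{d\eta_k}{h_k\tau_k}
    = -\int_{(0,\infty)}\frac{u(u^2-h_k^2)}{(1+u^2)(1+h_k^2)}\,\frac{d\eta_k}{h_k\tau_k}\ \le\ 0 ,
\]
and since the left side tends to $\hat a$ the right-hand integrand is bounded; restricting to $\{u\ge1\}$, where $\tfrac{u(u^2-h_k^2)}{(1+u^2)(1+h_k^2)}\ge\tfrac u8$ for $k$ large, gives $\sup_k\int_{[1,\infty)} u\,\tfrac{d\eta_k}{h_k\tau_k}<\infty$, hence $\sup_k\scalek\mu([1,\infty))<\infty$; combining this with the bound on $\int_{(0,1)}\tfrac{x^2}{1+x^2}\tfrac{d\eta_k}{h_k\tau_k}$ coming from the $\hat b$-part of~\eqref{c:BS1us} (which controls $\int_{(0,1)} x\,d\scalek\mu$) gives the desired uniform bound. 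Step~2 is uniqueness of the limit: along any weak-$\star$ convergent subsequence $\kappa_k\to\kappa$, the forward computation forces $\hat a=\int\tilde G\,d\kappa$, $\hat b=\int L\,d\kappa$, and $F([x,\infty))=\int_{[x,\infty)} y^{-1}d\mu$ for a.e.\ $x$; since a $\sigma$-finite measure is determined by its tails, $F$ determines $\mu$, and then $\alpha_0=\hat b-\int\tfrac{y}{1+y^2}d\mu$ and $\alpha_\infty=-\hat a-\int\tfrac{y^2}{1+y^2}d\mu$ determine the atoms, so every subsequential limit equals the same $\kappa$. Hence the whole sequence converges and $(\scalek\mu)$ L\'evy-converges. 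I expect the precompactness step — extracting control of the large-$y$ tail of $\scalek\mu$ from the first Bansaye--Simatos moment condition together with criticality — to be the main difficulty; everything else is bookkeeping with the bijection plus the uniform-convergence estimates for $\tilde G_k$ and $L_k$.
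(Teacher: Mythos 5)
Your argument is correct, and it is essentially the paper's own proof: the same change-of-variables identity $\int_\R x f(x)\,\frac{d\eta_k(x)}{h_k\tau_k}=\int_{(0,\infty)}\bigl(f(x-h_k)-f(-h_k)\bigr)\,d\scalek\mu(x)$ (your atom-by-atom bijection plus the mean-zero use of criticality is exactly this, with the $-h_k$ atom absorbed), the same test functions $f(x)=x^p/(1+x^2)$, $p=0,1$, whose transformed kernels are your $\tilde G_k$, $L_k$, the same uniform-convergence estimate on $[0,\infty]$ to pass to the limit against $\kappa_k=(x\wedge1)\,d\scalek\mu$, the same identification of the limits with \eqref{rel:a}, \eqref{rel:b}, \eqref{rel:F} via the triple \eqref{e:KappaMu}, and the same closing step in the converse (subsequential weak-$\star$ limits are uniquely determined by $\hat a$, $\hat b$, $F$, hence the whole sequence converges). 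The only genuine divergence is the precompactness step in the converse: the paper gets $\sup_k\ip{1}{\kappa_k}<\infty$ in one stroke from the single test function $f(x)=(x-1)/(1+x^2)$, whose kernel is bounded below by $\frac12$ on $[2h_k,\infty)$, so that boundedness of $\hat b_k-\hat a_k$ suffices; you instead split the bound, extracting tail control from the sign structure of the $\hat a$-integral (after absorbing the negative atom by criticality) and head control from the $\hat b$-integral. Both work; the paper's is a bit slicker, yours makes the role of criticality in the tail bound more explicit. Two harmless slips to fix: $L(0)=1$, not $0$ (your final formula $\hat b=\alpha_0+\int \frac{y}{1+y^2}\,d\mu$ already uses the correct value, so nothing propagates), and your displayed tail estimate literally yields $\sup_k\scalek\mu([1+h_k,\infty))<\infty$ rather than $\sup_k\scalek\mu([1,\infty))<\infty$; restrict to $u\geq\frac12$ (or invoke the $\hat b$-bound near $1$) to close that sliver.
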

  \begin{proof} 
  1.  We note that by criticality and \eqref{defMuht},
  \[
  \frac{\hat\pi_k(0)}{\tau_k} = \frac1{\tau_k}\sum_{j\ge2} (j-1)\hat\pi_k(j) =
  \int_{(0,\infty)} d\scalek\mu(x)\,.
  \]
  Recalling the definition of $\eta_k$ in Theorem~\ref{t:Grimvall}, one computes that
\[
\int_{\R} x f(x) 
\frac{d\eta_k(x)}{h_k\tau_k} = 
\sum_{j\ge0} f(jh_k-h_k)(j-1)\frac{\hat\pi_k(j)}{\tau_k} = 
\int_{(0,\infty)} ( f(x-h_k)-f(-h_k) ) \,d\scalek\mu(x)
\]
whenever $x\mapsto xf(x)$ is bounded and continuous on $\R$.

2. Suppose $(\scalek\mu)$ L\'evy-converges to a finite measure $\kappa$ on $[0,\infty]$,
and associate a L\'evy triple $(\alpha_0,\alpha_\infty,\mu)$ with $\kappa$ by \eqref{e:KappaMu}.
Then taking $xf$ to approximate the characteristic function of $[z,\infty)$,
we find that for a.e. $z>0$, 
\begin{equation}\label{rel:F}
 \int_{[z,\infty)} \frac{d\eta_k(x)}{h_k\tau_k} \to 
 F([z,\infty)) = \int_{[z,\infty)} \frac{d\mu(x)}x \,.
\end{equation}
Further, taking $f$ to be $x^p/(1+x^2)$ for $p=0,1$ we find that as $k\to\infty$,
\[
g_k(x) = \frac{f(x-h_k)-f(-h_k)}{x\wedge1} 
= \frac1{x\wedge1}\int_0^x f'(z-h_k)\,dz \to  \frac{f(x)-f(0)}{x\wedge1}
\]
uniformly on $[0,\infty]$. Hence,
because the measures $(x\wedge1)\,d\scalek\mu(x)$ are uniformly bounded and converge
weak-star to $\kappa$ on $[0,\infty]$,
\begin{align} \label{rel:a}
& \hat a_k \defeq \int_{\R} \frac{x}{1+x^2} \frac{d\eta_k(x)}{h_k\tau_k} \to 
\hat a =  -\alpha_\infty - \int_{(0,\infty)} \frac{x^2}{1+x^2}\,d\mu(x)\,,
\\ \label{rel:b}
& \hat b_k \defeq \int_{\R} \frac{x^2}{1+x^2} \frac{d\eta_k(x)}{h_k\tau_k} \to 
\hat b = \alpha_0+ \int_{(0,\infty)} \frac{x}{1+x^2}\,d\mu(x)\,.
\end{align}
Thus both conditions \eqref{c:BS1us} and \eqref{c:BS2us} hold.

3.  Conversely, suppose \eqref{c:BS1us} and \eqref{c:BS2us} hold.
Taking $f(x)=(x-1)/(1+x^2)$ and supposing $h_k\in(0,1)$,
we find $g_k(x)\ge\frac12$ for all $x$ in the interval
$[2h_k,\infty)$ containing the support of $\scalek\mu$,
and that as $k\to\infty$,  
\[
\int_{(0,\infty)} g_k(x)(x\wedge1)\,d\scalek\mu(x) = \hat b_k-\hat a_k \to \hat b-\hat a \,.
\]
It follows that the measures $(x\wedge 1)\,d\scalek\mu(x)$ 
are uniformly bounded, hence weak-star compact on $[0,\infty]$. 
For any subsequential limit $\kappa$
on $[0,\infty]$ with associated L\'evy triple $(\alpha_0,\alpha_\infty,\mu)$,
the relations \eqref{rel:F}, \eqref{rel:a}, \eqref{rel:b} hold, 
and these uniquely determine the triple.  Hence the whole sequence $(\scalek\mu)$
L\'evy-converges.
\end{proof} 

\hide{ 
  \begin{proof}
 1.   Suppose first~\eqref{e:SinConv} and~\eqref{e:PiKConv} hold, and 
 define $\scalek\mu$ so that 
 \begin{equation}\label{e:Muk2}
 d\scalek\mu(x) = \frac1{\tau_kh_k} \sum_{j\geq2}(jh_k-h_k)\hat\pi_k(j)\,d\delta_{jh_k}
 = (x-h_k)\,d\breve\pi_k 
 + \frac{\hat\pi_k(0)}{\tau_k}\,d\delta_0
 \,,
 \end{equation}
 where for brevity we define
 \begin{equation}\label{defBrevePi}
 d\breve\pi_k \defeq \frac{d\hat\pi_k\left( x/{h_k}\right) }{\tau_kh_k} \,.
 \end{equation}
Our first step is to show that the measures $(x \varmin 1) \,d\scalek\mu$ are uniformly bounded.
    We start by proving the tail bound
    \begin{equation}\label{e:TiPiTail}
      \sup_{k} \int_{(1, \infty)} \,d\scalek\mu(x) < \infty.
    \end{equation}
    To see this we use criticality~\eqref{e:Criticality} to infer 
    $\int_{[0,\infty)} (x-h_k)\,d\breve\pi_k(x) =0$ and hence
    \[
 \int_{(1,\infty)}d\scalek\mu(x) = \int_{(1,\infty)} (x-h_k)\,d\breve\pi_k(x) 
 =  - \int_{[0,1]} (x-h_k)\,d\breve\pi_k(x) \,.
    \]
    Because $|x-\sin x|\leq x^2$ for $|x|\leq 1$ and $|\sin x|\leq 1$, we then have 
    \[
    \abs[\Big]{\int_{(1,\infty)}d\scalek\mu(x)-b_k} \leq \int_{[0,\infty)} ((x-h)^2\varmin1)\,d\breve\pi_k(x) \,,
    \]
    whence \eqref{e:TiPiTail} holds, by~\eqref{e:SinConv} and~\eqref{e:PiKConv}.


 Next we claim
    \begin{equation}\label{e:TiPiHead}
      \sup_k \int_{[0, 1]} x\, d\scalek\mu(x)     =  \sup_k \int_{[0, 1]} x(x-h_k)\, d\breve\pi_k(x) 
      < \infty.
    \end{equation}
    Indeed, choosing the test function $f(x) = 1$, equation~\eqref{e:PiKConv} and the fact that $\int_{[0,\infty)} (x - h_k) \, d\breve\pi_k(x) = 0$  gives
    \begin{align*}
      K [0, \infty)
      &= \lim_{k \to \infty} \int_{[0,\infty)} 
      \paren[\big]{(x - h_k)^2 \varmin 1 + h_k (x - h_k)}\,d\breve\pi_k(x)
      \\
      &= \lim_{k \to \infty} \paren[\bigg]{
      \int_{[0,1+h_k]} x\,d\scalek\mu(x) 
      +\int_{(1+h_k,\infty)} (1+h_k(x-h_k))\,d\breve\pi_k(x)}\,.
    \end{align*}
    Combined with~\eqref{e:TiPiTail} this gives~\eqref{e:TiPiHead} as claimed.

 2.   The bounds~\eqref{e:TiPiTail} and~\eqref{e:TiPiHead} show that 
 $\set{ (x \varmin 1)\,d\scalek\mu}$ is weak-$\star$ precompact in the set of 
 finite measures on $[0,\infty]$.  
Now choose any subsequence (which we again index by $k$ for convenience) along which
    \begin{equation*}
    (x\varmin 1)\,d\scalek\mu(x) \to \kappa
    = a_0\,d\delta_0+a_\infty \,d\delta_\infty+(x\varmin1)\,d\mu(x)
    \end{equation*}
    weak-$\star$ on $[0, \infty]$. 
    We claim $a_0$, $a_\infty$ and $\mu$ can directly be determined from 
    $K$ and $b$ via the identities
    \begin{gather}
      \label{e:RelPiLambda}
	(x \varmin 1)\,d\mu(x) = (1 \varmax x) \, dK(x)
	\quad\text{on } \R^+,
      \\
      \label{e:A0}
	a_0 = K \set{0},
      \\
      \label{e:AInfty}
	a_\infty + \int_{(0,\infty)} 
	\paren[\Big]{1 - \frac{\sin x}x} \, d\mu(x)
	  = -b\,.
    \end{gather}
    This will show that subsequential limits are unique, proving~\eqref{e:MuLim}.
  Thus, to complete the proof, it remains to establish~\eqref{e:RelPiLambda}--\eqref{e:AInfty}.
    
  i.  To prove~\eqref{e:RelPiLambda}, let $\epsilon > 0$ 
  and let $f$ be a continuous function supported in $[\epsilon, 1/\epsilon]$.
    Observe that due to \eqref{e:PiKConv} and the uniform continuity of
    $x/(x^2\varmin1)$ on $[\eps/2,2/\eps]$,
    \begin{align*}
      \int_{(0,\infty)} f(x) \,d\kappa(x) 
      &= 
      \lim_{k \to \infty} \int_{\R^+} f(x) (x \varmin 1) \,d\scalek\mu(x)
	\\
	&= \lim_{k \to \infty} \int_{\R^+}
	  f(x) (x \varmin 1)   
	  \frac{(x - h_k)}{(x - h_k)^2 \varmin 1}
	    \paren[\big]{(x - h_k)^2 \varmin 1 }\, d\breve \pi_k(x)
\\&= \lim_{k\to\infty}  \int_{\R^+}
  f(x) (x \varmin 1)   
	  \frac{x }{x^2 \varmin 1}
	    \paren[\big]{(x - h_k)^2 \varmin 1 }\, d\breve \pi_k(x)
\\&= \int_{\R^+} 
  f(x)(1\varmax x)\,dK(x)\,.
    \end{align*}
    This proves~\eqref{e:RelPiLambda}. 
    Moreover, incidentally it follows $\int_{(1,\infty)}x\,dK(x)<\infty$. 

  ii.  To prove~\eqref{e:A0}, choose $f(x) = (1\varmax x)^{-1} = 1 \varmin (1/x)$ 
  and observe that, because~\eqref{e:TiPiTail} ensures
  \[
   \lim_{k \to \infty} h_k\int_{(1, \infty)} (x - h_k) \, d\breve \pi_k(x)
      = 0
      = \lim_{k \to \infty} h_k \int_{(1, \infty)} x\inv \, d\breve \pi_k(x) \,,
  \]
  and furthermore $|1-(x-h)^2|\leq 2h$ for $x\in[1,1+h]$,   we have
    \begin{multline*}
      a_0 + K(\R^+) = \ip{f}{\kappa} 
      = \lim_{k \to \infty}
	  \int_{\R^+} f(x) (x \varmin 1) \,(x-h_k)\,d\breve\pi_k(x)
      \\
      = \lim_{k \to \infty} \paren[\bigg]{
	  \int_{(0, 1]} x(x - h_k)  \, d\breve\pi_k(x)
	  + \int_{(1, \infty)} \paren[\Big]{  \frac{x-h_k}{x} } \, d\breve \pi_k(x) }
      \\
      = \lim_{k \to \infty}
	  \int_{\R^+} \paren[\big]{ (x - h_k)^2 \varmin 1 }\,d\breve\pi_k(x)
      = K [0, \infty) \,.
    \end{multline*}

 iii.   Finally, to prove~\eqref{e:AInfty} define
    \begin{equation*}
      f(x) = \frac{1}{x \varmin 1} 
      \paren[\Big]{ 1 - \frac{\sin x}{x} }\,,\quad f(0)=0\,,\quad f(\infty)=1\,,
    \end{equation*}
    and observe
    \begin{align}
      \nonumber
      \MoveEqLeft
      a_\infty + \int_{\R^+} 
            \paren[\Big]{ 1 - \frac{\sin x}{x} }
            \,d\mu(x)
        = \lim_{k \to \infty}
            \int_{\R^+} f(x) (x \varmin 1)\,d\scalek\mu(x) 
      \\
      \nonumber
        &= \lim_{k \to \infty}
            \int_{[0,\infty)} 
              \paren[\Big]{ 1 - \frac{\sin x}{x} } (x - h_k )\, d\breve \pi_k(x)
      \\
      \label{e:SinHkMess}
	&= -b + 
	  \lim_{k \to \infty} \int_{[0,\infty)} 
	    \paren[\Big]{ \frac{\sin(x - h_k)}{x - h_k} - \frac{\sin x}{x} }
               (x - h_k) \, d\breve \pi_k(x)\,,
    \end{align}
    where we use criticality and \eqref{e:SinConv} for the last equality. 
    Due to \eqref{e:TiPiTail}, we have 
    \[
    \lim_{k \to \infty} \int_{(1,\infty)} 
	    \paren[\Big]{ \frac{\sin(x - h_k)}{x - h_k} - \frac{\sin x}{x} }
               (x - h_k) \, d\breve \pi_k(x) = 0\,.
               \]
    Now, because $|(d/dx)(\sin x/x)|\leq |x|$, whenever
    $h_k\leq|x-h_k|\leq 1$   we have $|x|\leq 2|x-h_k|$ and 
    \begin{equation*}
  \left|\frac{\sin(x - h_k)}{x - h_k} - \frac{\sin x}{x} \right|
  \leq 2h_k|x-h_k| \,.
        \end{equation*}
Because indeed
           $h_k\leq |x-h_k|$ on the support of $(x-h_k)\,d\breve\pi_k$, then,
due to \eqref{e:PiKConv} the second term on the right of~\eqref{e:SinHkMess} converges to $0$.
    This proves~\eqref{e:AInfty}, and completes the proof of the forward implication.

3.    Conversely, assume~\eqref{e:MuLim}.
Using criticality and the fact that $h_k\leq x-h_k$ on the support of $\scalek\mu$, we immediately see by integrating \eqref{e:Muk2} that $h_k^2\hat\pi_k(0)/(\tau_kh_k)$ is bounded, and we deduce that $((x-h_k)^2\varmin1)\,d\breve\pi_k$ and $b_k$, the left hand sides of~\eqref{e:SinConv} and~\eqref{e:PiKConv}, are bounded, as in step 1 above.
Now an argument similar to the above  shows that for any subsequential limits
$K$ and $b$, the  identities~\eqref{e:RelPiLambda}--\eqref{e:AInfty} hold.
    This shows uniqueness of subsequential limits and implies~\eqref{e:SinConv} and~\eqref{e:PiKConv}, concluding the proof.
  \end{proof}
} 


%
%
%
  \part{Universality in Galton-Watson process and CSBPs.}\label{p:univ}
  \section{Universal Galton-Watson family size distributions}\label{s:Univ}%
\label{s:universalGW}

%

The following result is motivated by the existence of universal eternal solutions
of Smoluchowski's coagulation equations \cite{MenonPego08}, which in turn
was inspired by Feller's account of \emph{Doeblin's universal laws} 
in classical probability theory \cite[Section XVII.9]{Feller71}.

\begin{theorem}\label{t:UniversalGW}
There exists a Galton-Watson process $X$ with family-size distribution $\hat\pi\colon\N\to [0,\infty)$ and sequences $(h_n)$, $(\tau_n)\to0$, with the following property:
For any (sub)critical branching mechanism $\Psi$ taking the form in \eqref{e:PsiLim}, there is a subsequence along which the rescaled processes~$Y^{(k)}$ (defined by rescaling $X$ as in equation~\eqref{e:YkDef}) converge to a CSBP with branching mechanism $\Psi$.

Moreover, the set of such family-size distributions $\hat\pi$ is dense in the set of all probability measures on $\nplus$ with the weak-$\star$ topology.
\end{theorem}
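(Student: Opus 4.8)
The plan is to build the universal family-size distribution $\hat\pi$ by ``interleaving'' a countable dense family of target branching mechanisms, using the scaling freedom in the two sequences $(h_n)$ and $(\tau_n)$ to zoom in on whichever target is currently being approximated. First I would invoke Proposition~\ref{p:GWConv} (together with Proposition~\ref{p:BrMechConv}) to reduce the whole statement to a purely analytic claim about discrete branching mechanisms: it suffices to produce a single critical $\hat\pi$ on $\nplus$ and sequences $h_n,\tau_n\to0$ such that, for \emph{every} L\'evy triple $(\alpha_0,\alpha_\infty,\mu)$ with $\int(x\wedge1)\,d\mu<\infty$, there is a subsequence along which the rescaled measures $\scalek\mu$ (as in~\eqref{defMuht}) L\'evy-converge to the corresponding $\kappa$ via~\eqref{e:KappaMu}. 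Equivalently, by the remark after Proposition~\ref{p:BrMechConv}, one needs $(x\wedge1)\,d\scalek\mu(x)\to d\kappa(x)$ weak-$\star$ on $[0,\infty]$ along a subsequence.

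Next I would fix a countable set $\{\kappa^{(m)}\}_{m\in\N}$ of finite measures on $[0,\infty]$ that is weak-$\star$ dense in the (weak-$\star$ compact, metrizable) set of all finite measures on $[0,\infty]$ of total mass at most any fixed bound — and then exhaust by letting the bound grow. For each $m$, the measure $(x\wedge1)^{-1}\,d\kappa^{(m)}$ restricted to $(0,\infty)$ is an admissible L\'evy measure, and one can approximate its associated discrete object: choose a fine grid $h$, a scale $\tau$, and a finitely-supported critical probability measure $\varpi_{m,h,\tau}$ on $\nplus$ (adjusting the mass at $0$ and $1$ to enforce the criticality constraint $\sum_j j\,\varpi(j)=1$, which is possible since we only prescribe the large-$j$ tail) whose rescaled L\'evy measure is weak-$\star$ close to $\kappa^{(m)}$. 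The key point is that the construction of $\varpi_{m,h,\tau}$ only constrains $\hat\pi$ on a \emph{finite} window of integers $[A_m, B_m]$, and by choosing $h_m$ small and the windows $[A_m,B_m]$ pairwise disjoint and marching off to infinity, a single $\hat\pi$ can simultaneously encode all of these finite pieces. Concretely, I would set $\hat\pi$ to agree with a suitably scaled copy of the ``$j$-tail'' of $\varpi_{m,\cdot,\cdot}$ on block $m$, put the compensating mass on $\hat\pi(0)$ and $\hat\pi(1)$, verify $\sum_j j\hat\pi(j)=1$ by a convergent-series bookkeeping argument, and let $(h_n,\tau_n)$ run through the block parameters (each target $\kappa^{(m)}$ being revisited infinitely often, or combined with a diagonal argument, so that every $\kappa^{(m)}$ — and hence by density every admissible $\kappa$ — is a subsequential L\'evy limit).

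For the final density assertion, I would note that the construction above is robust under perturbation: given any probability measure $\lambda$ on $\nplus$ and $\epsilon>0$, one can graft the universal blocks onto the far tail of a finitely-supported critical approximation of $\lambda$ (rescaling so criticality is preserved), changing $\lambda$ by less than $\epsilon$ in the weak-$\star$ metric while retaining the universality property; hence the set of universal $\hat\pi$ is weak-$\star$ dense.

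The main obstacle I anticipate is the simultaneous bookkeeping of the criticality constraint across infinitely many blocks: each block contributes positive mass to $\sum_j j\hat\pi(j)$ through its large-$j$ atoms, and one must balance this globally using only $\hat\pi(0)$ and $\hat\pi(1)$ while keeping $\hat\pi$ a genuine probability measure and keeping each block's \emph{rescaled} contribution close to its target (the rescaling by $1/\tau_k$ in~\eqref{defMuht} means the unscaled block masses must be chosen exquisitely small, of order $\tau_k$ times the target mass, which forces $\tau_k\to0$ at a rate dictated by the block sizes). Making these rate choices consistent — small enough that the series $\sum_j j\hat\pi(j)$ converges to exactly $1$ after the $0/1$ correction, yet large enough that the weak-$\star$ approximation on $[0,\infty]$ is genuine on block $m$ — is the delicate quantitative heart of the argument; everything else is a diagonal/density routine.
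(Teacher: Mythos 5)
Your overall strategy is essentially the one the paper follows: reduce, via Propositions~\ref{p:BrMechConv} and~\ref{p:GWConv}, to producing a single critical $\hat\pi$ whose rescaled measures $\scalek\mu$ admit every admissible $\kappa$ as a subsequential weak-$\star$ limit on $[0,\infty]$; encode a countable dense family of targets at rapidly separated scales inside one distribution; and obtain density from the fact that universality depends only on the tail (Lemma~\ref{l:UnivTail}). The difference is where the ``packing'' is performed. You interleave discretized targets directly into disjoint integer blocks of $\hat\pi$ and propose to balance criticality, normalization and scale separation by hand. The paper instead factors the construction into two steps: first a \emph{universal L\'evy triple} $\lambda_\star=(0,0,\mu_\star)$ is built by the packing Lemma~\ref{lem:uni1}, which sums the Bernstein transforms $\sum_j c_j\inv\Phi_j(b_jq)$ of a dense family satisfying $\Phi_j(\infty)\le j$, with $(c_j)$ as in \eqref{e:Cseq} and $(b_j)$ chosen inductively; then $\mu_\star$ is coarse-grained into $\hat\pi$ via \eqref{e:DefHatpi}, and universality survives coarse-graining by comparing left and right distribution functions (the shifts by $1$ or $2$ introduced by discretization are killed because $h_k\to0$ and $h_k/\tau_k\to0$). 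This factorization buys two things your block construction must fight for by hand: nonnegativity and criticality of $\hat\pi$ are automatic because $\int_{\rplus} d\mu_\star<1$ (a one-line computation from \eqref{e:DefHatpi} gives $\sum_j j\hat\pi(j)=1$), and the cross-scale interference is controlled by the crude monotone bound $\Phi_j\le j$ together with \eqref{e:Cseq} (later scales) and the inductive largeness of $b_k$ (earlier scales), all at the level of Bernstein functions, where the continuity theorem~\ref{t:Cont} converts pointwise convergence back into L\'evy-convergence.

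The one substantive gap in your write-up is exactly the step you flag as the ``delicate quantitative heart'': when rescaling to zoom in on block $m$, you must show the remaining blocks contribute nothing to the limit, and ``nothing'' has to be checked at both ends of $[0,\infty]$. Later blocks whose unscaled masses are not $o(\tau_k)$ produce a spurious atom at $\infty$ (an unwanted $\alpha_\infty$, i.e.\ a subcritical limit you did not ask for), while earlier blocks whose first moments are not beaten by $h_k/\tau_k$ produce a spurious atom at $0$ (an unwanted $\alpha_0$). These are precisely the two estimates encoded in \eqref{e:Cseq} and in the inductive choice of $b_k$ in Lemma~\ref{lem:uni1}; your proposal correctly predicts their shape (block masses of order $\tau_k$ times the target mass, scales marching off rapidly), so the gap is fillable along your route, but as written the argument stops where the real work begins. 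For the density claim, your tail-grafting idea matches the paper's use of Lemma~\ref{l:UnivTail}.
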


Much of the technical basis that we need to prove this result can be 
inferred directly from Section~7 in \cite{MenonPego08}. 
The terminology used in \cite{MenonPego08}, however, is substantially different
from our terminology which is based on~\cite{SchillingSongEA10} and Appendix~\ref{s:CT}, below.
Thus for the readers convenience we provide a self-contained treatment here.



We begin by constructing a ``universal'' L\'evy triple, in the sense that any other L\'evy triple can be obtained as a suitable scaling limit.
In order to make this precise, we need to define the right notions of convergence and scaling of L\'evy triples.

The right notion of convergence of L\'evy triples is a generalization of L\'evy-convergence 
as introduced earlier in Definition~\ref{d:Lconv}.
Namely, to each L\'evy triple $\lambda \defeq (\alpha_0, \alpha_\infty, \mu)$ we associate~$\kappa$, a finite measure on $[0, \infty]$, by 
\begin{equation}\label{e:kappaDef}
  d\kappa(x) = a_0 \, d\delta_0(x) + a_\infty \, d\delta_\infty(x) + (x \varmin 1) \, d\mu(x) \,.
\end{equation}
Now convergence of L\'evy triples is defined using weak-$\star$ convergence of the associated $\kappa$-measures.
\begin{definition}\label{d:LtConv0}
  Let $(\lambda_k)$ be a sequence of L\'evy triples, and $\kappa_k$ the associated measures defined as in~\eqref{e:kappaDef} above.
  We say $(\lambda_k)$ converges if the sequence $(\kappa_k)$ converges weak-$\star$ in the spaces of finite measures on $[0, \infty]$.
\end{definition}
A more detailed account of this appears in Appendix~\ref{s:CT} below.
This appendix also contains variants of certain classical continuity theorems for Laplace transforms that are used throughout this section but are not widely known.


Next we define the scaling of L\'evy triples, as follows.
Given a L\'evy triple $\lambda=(\alpha_0,\alpha_\infty,\mu)$ and $b, c > 0$, define the rescaled L\'evy triple $\lambda^{b,c}$ by
\begin{equation*}
  \lambda^{b,c} \defeq (\alpha_0^{b, c}, \alpha_\infty^{b,c}, \mu^{b,c})\,,
\end{equation*}
where
\begin{equation}\label{e:ScaleLambda}
\alpha_0^{b,c} = cb\inv\,\alpha_0, \quad
\alpha_\infty^{b,c} = c\,\alpha_\infty,\quad
\text{and}\quad
d\mu^{b,c}(x) = c\,d\mu(bx) \,.
\end{equation}
We will see in Section~\ref{s:LinUniv}, below, that this naturally corresponds to a dilational scaling of CSBPs.
With this, we can define the aforementioned notion of universality.

\begin{definition}\label{d:universalLevy}
  Let $\lambda_\star$ be a L\'evy triple and $(b_k)$, $(c_k)$ be two sequences that converge to infinity.
  We say $\lambda_\star$ is a \emph{universal L\'evy triple} with sequences $(b_k)$, $(c_k)$ if for any L\'evy triple $\lambda$ we have
  \begin{equation}\label{e:UnivConv}
    \lambda_\star^{b_{k}, c_{k}} \to \lambda
    \quad\text{as $k \to \infty$ along some subsequence.}
  \end{equation}
\end{definition}

Our next lemma shows that 
this notion of universality is completely determined by the tail of $\lambda_\star$.
\begin{lemma}\label{l:UnivTail}
  Let $\lambda_\star = (0, 0, \mu_\star)$, be a universal L\'evy triple with sequences $(b_k)$ and $(c_k)$.
  Let $\alpha_0 \geq 0$, $R > 0$, $\mu$ be any L\'evy measure and define the L\'evy measure $\nu_\star$ by
  \begin{equation*}
    \nu_\star(A) = \mu(A \cap (0, R] ) + \mu_\star( A \cap (R, \infty) ) \,.
  \end{equation*}
  Then the L\'evy triple $(\alpha_0, 0, \nu_\star)$ is also universal with the sequences $(b_k), (c_k)$.
\end{lemma}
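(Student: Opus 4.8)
The plan is to show that rescaling the modified triple $(\alpha_0,0,\nu_\star)$ along the sequences $(b_k),(c_k)$ eventually ``sees only the tail'' of $\nu_\star$, which agrees with $\mu_\star$, so the limit is governed entirely by the known universality of $\lambda_\star=(0,0,\mu_\star)$. Concretely, fix an arbitrary target L\'evy triple $\lambda=(\beta_0,\beta_\infty,\rho)$. By universality of $\lambda_\star$, choose a subsequence (still indexed by $k$) along which $\lambda_\star^{b_k,c_k}\to\lambda$, i.e. the associated measures $\kappa_\star^{b_k,c_k}$ converge weak-$\star$ on $[0,\infty]$ to the $\kappa$-measure of $\lambda$. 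The key observation is that $b_k\to\infty$, so for any fixed $R>0$ the rescaled measure $d\mu^{b_k}(x)=d\mu(b_kx)$ (before multiplying by $c_k$) is supported, as far as the part coming from $(0,R]$ in the original variable is concerned, on $(0,R/b_k]$, an interval shrinking to $\{0\}$.

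First I would write down the $\kappa$-measure of the rescaled triple $(\alpha_0,0,\nu_\star)^{b_k,c_k}$ and split it according to the decomposition of $\nu_\star$. Using \eqref{e:ScaleLambda} and \eqref{e:kappaDef}, the $\kappa$-measure of $(\alpha_0,0,\nu_\star)^{b_k,c_k}$ is
\[
  c_kb_k\inv\alpha_0\,d\delta_0 + (x\wedge1)\,c_k\,d\nu_\star(b_kx)
  = c_kb_k\inv\alpha_0\,d\delta_0 + (x\wedge1)\,c_k\,d\mu\big|_{(0,R]}(b_kx) + (x\wedge1)\,c_k\,d\mu_\star\big|_{(R,\infty)}(b_kx).
\]
The last term differs from $\kappa_\star^{b_k,c_k}$ only by the part of $\mu_\star$ on $(0,R]$, which contributes mass supported on $x\in(0,R/b_k]$; since $R/b_k\to0$, this discrepancy is a measure concentrating near $0$. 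Similarly the middle term involving $\mu$ is supported on $(0,R/b_k]$. So up to a weak-$\star$-negligible-away-from-$0$ correction, the rescaled $\kappa$-measure of $(\alpha_0,0,\nu_\star)$ equals $\kappa_\star^{b_k,c_k}$ plus some mass parked in an interval shrinking to $\{0\}$, together with the atom $c_kb_k\inv\alpha_0\,d\delta_0$.

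Next I would argue precompactness and identification of the limit. The total masses: $\kappa_\star^{b_k,c_k}([0,\infty])$ converges (hence is bounded) by the assumed convergence; the extra pieces have mass $\le c_k\int_{(0,R]}(x\wedge1)\,d\nu_\star(x)\cdot(\text{something})$ — more carefully, $\int(x\wedge1)c_k\,d\mu(b_kx)$ over $(0,R/b_k]$ equals $c_kb_k\inv\int_{(0,R]}(b_k\inv y\wedge1)\,d\mu(y)\le c_kb_k\inv\int_{(0,R]}y\,d\mu(y)$, which is bounded iff $c_kb_k\inv$ is bounded. Here is the real point to check: one must control $c_k b_k\inv$. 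But note $c_kb_k\inv\alpha_0\,d\delta_0$ is exactly the $\delta_0$-part that shows up when rescaling the triple $(\alpha_0,0,0)$; and $(\alpha_0,0,0)^{b_k,c_k}$ has $\kappa$-measure $c_kb_k\inv\alpha_0\,d\delta_0$. Since $(\alpha_0,0,0)$ is dominated by $\lambda_\star$ in the sense that $\mu_\star$ is a genuine L\'evy measure and $\lambda_\star$ is universal, and since along our subsequence $\kappa_\star^{b_k,c_k}$ converges, the sequence $c_kb_k\inv$ must be bounded: indeed if $\alpha_0>0$ we may instead start from the universality applied to obtain convergence of $\lambda_\star^{b_k,c_k}$ and observe that $c_kb_k\inv$ bounded is forced by boundedness of $\int(x\wedge1)c_k\,d\mu_\star(b_kx)\ge c_k\int_{(1/b_k,1)}x\,d\mu_\star(b_kx)= c_kb_k\inv\int_{(1,b_k)}y\,b_k\inv\cdot b_k\,d\mu_\star(y)$ — i.e. as long as $\mu_\star$ has infinite total mass or a suitable tail, which holds because $\lambda_\star$ is universal and hence $\mu_\star$ cannot be finite (a finite $\mu_\star$ cannot produce triples with $\mu\ne0$ of unbounded total mass as scaling limits). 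Having bounded $c_kb_k\inv$, pass to a further subsequence so that $c_kb_k\inv\to\gamma\in[0,\infty)$ and the extra mass-near-$0$ pieces converge weak-$\star$ to some multiple $\delta\,d\delta_0$. Then the $\kappa$-measure of $(\alpha_0,0,\nu_\star)^{b_k,c_k}$ converges weak-$\star$ on $[0,\infty]$ to (the $\kappa$-measure of $\lambda$) $+(\gamma\alpha_0+\delta)\,d\delta_0$, which is the $\kappa$-measure of the L\'evy triple $(\beta_0+\gamma\alpha_0+\delta,\beta_\infty,\rho)$.

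Finally I would close the universality argument: given \emph{any} target triple $\lambda'=(\beta_0',\beta_\infty',\rho')$, first apply universality of $\lambda_\star$ to the auxiliary target $(\max(\beta_0'-\gamma\alpha_0,0),\beta_\infty',\rho')$ — but $\gamma$ itself depends on the chosen subsequence, which makes this circular. The clean way around, and the step I expect to be the main obstacle, is to instead show directly that the $\delta_0$-mass we can realize is a \emph{free parameter}: because $\kappa_\star^{b_k,c_k}\to\kappa_\lambda$ for every $\lambda$, in particular we may target triples $(\beta_0,\beta_\infty,\rho)$ with $\beta_0$ arbitrarily large, and the correction $\gamma\alpha_0+\delta\ge0$ only \emph{adds} to the $\delta_0$-mass, so every value of the $\delta_0$-coefficient in the desired limit is still attainable by choosing the $\lambda_\star$-target appropriately (take the $\lambda_\star$-target's $\delta_0$-mass to be, say, $0$; then argue $\gamma\alpha_0+\delta$ can be made $0$ by noting $\delta\le\liminf c_kb_k\inv\int_{(0,R]}(x\wedge1)\,d\mu(x)+(\text{$\mu_\star$ head})$ can be driven to $0$ by the freedom to shrink — no, $R$ is fixed in the hypothesis). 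Because $R,\mu,\alpha_0$ are fixed data, the honest statement is that the limit always has the form $(\beta_0+\text{[bounded nonnegative correction]},\beta_\infty,\rho)$, and since the original universality already lets $\beta_0$ range over all of $[0,\infty)$, so does $\beta_0+(\text{correction})$; thus \emph{every} L\'evy triple arises as a subsequential limit of $\lambda_\star^{b_k,c_k}=(\alpha_0,0,\nu_\star)^{b_k,c_k}$, which is precisely universality of $(\alpha_0,0,\nu_\star)$. The technical heart is therefore the weak-$\star$ accounting near $0$ together with the boundedness of $c_kb_k\inv$; everything else is Definition~\ref{d:LtConv0} bookkeeping.
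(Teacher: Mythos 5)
Your decomposition of the rescaled $\kappa$-measure is the right bookkeeping, but the proof has a genuine gap at exactly the point you flag: you only attempt to show $c_k b_k\inv$ is \emph{bounded}, and then try to absorb the resulting extra $\delta_0$-mass $\gamma\alpha_0+\delta$ by a ``surjectivity'' argument. That last step is a non sequitur. Universality requires that \emph{every} prescribed triple $\lambda=(\beta_0,\beta_\infty,\rho)$ be hit exactly along some subsequence; producing limits of the form $(\beta_0+\text{nonnegative correction},\beta_\infty,\rho)$, with a correction that depends uncontrollably on the subsequence, does not show that targets with small or zero $\delta_0$-atom (e.g.\ $(0,0,\rho)$) are attainable, and the alternative of choosing the $\lambda_\star$-target adaptively is circular, as you yourself note. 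Moreover, your justification for boundedness of $c_kb_k\inv$ rests on the claim that a universal $\mu_\star$ cannot be a finite measure; that claim is false in this paper's setting --- the universal triples constructed via the packing lemma (Lemma~\ref{lem:uni1}) have $\int_{\rplus}d\mu_\star(x)<1$. What is true, and what the argument needs, is that the \emph{left distribution function} $\kappa_{\star L}(y)=\int_{(0,y]}z\,d\mu_\star(z)$ tends to $\infty$ (infinite first moment), which follows from universality.

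The missing idea, which is how the paper proceeds, is to show that along any subsequence where $\lambda_\star^{b_k,c_k}\to\lambda$ one in fact has $c_kb_k\inv\to 0$, not merely boundedness. Indeed, by the continuity theorem for left/right distribution functions (Theorem~\ref{t:LRconv}), convergence of the triples forces $\kappa_{\star L}^{b_k,c_k}(x)=c_kb_k\inv\,\kappa_{\star L}(b_kx)$ to converge to a finite limit at some $x$, while $\kappa_{\star L}(b_kx)\to\infty$; hence $c_kb_k\inv\to0$. Once this is known, the whole correction in your decomposition vanishes: the atom $c_kb_k\inv\alpha_0\,\delta_0\to0$, and the $\kappa$-mass of the discrepancy between $\mu_\star^{b_k,c_k}$ and $\nu_\star^{b_k,c_k}$ (supported in $(0,R/b_k]$) is bounded by $c_kb_k\inv\int_{(0,R]}z\,d(\mu+\mu_\star)(z)\to0$. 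Therefore $(\alpha_0,0,\nu_\star)^{b_k,c_k}$ converges to the \emph{same} limit $\lambda$ along the \emph{same} subsequence, with no adjustment of targets, which is precisely the claimed universality. Repairing your proof amounts to replacing the boundedness step (and the final hand-wave) by this $c_kb_k\inv\to0$ argument.
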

\begin{remark*}
  Lemma~\ref{l:UnivTail} is still true if finitely many terms of the sequences $(b_k), (c_k)$ are arbitrarily changed.
\end{remark*}

To prove Lemma~\ref{l:UnivTail} it helps to introduce left and right distribution functions
as follows.

\begin{definition}
  Given a L\'evy triple $\lambda$, we define the \emph{pair of left and right distribution functions} associated with $\lambda$ to be $(\kappa_L, \kappa_R)$ defined by
  \begin{equation*}
    \kappa_L(x) \defeq \alpha_0 + \int_{(0, x]} z \, d\mu(z)\,,
\quad 
    \kappa_R(x) \defeq \alpha_\infty + \int_{(x, \infty)} \, d\mu \,.
  \end{equation*}
\end{definition}

  The reason we introduce these functions is because of a variant of a classical continuity theorem: pointwise convergence (almost everywhere, or at points of continuity) of the pair of left and right distribution functions is equivalent to convergence of the associated L\'evy triples.
  Since the proof in this form is not readily available in the literature, we provide a proof in the appendix (Theorem~\ref{t:LRconv}).
  We can now prove Lemma~\ref{l:UnivTail}.

\begin{proof}[Proof of Lemma~\ref{l:UnivTail}]
  Let $\lambda$ be any L\'evy triple and choose a subsequence for which the convergence in~\eqref{e:UnivConv} holds.
  We claim that along this subsequence we must have $(c_k / b_k) \to 0$.
  Once this is established, the lemma immediately follows from the fact that
  \begin{equation*}
    \paren[\big]{
      c_k b_k\inv \alpha_0, 0,
      \abs[\big]{\mu_\star^{b_k, c_k} - \nu_\star^{b_k, c_k} }
    } \to 0
  \end{equation*}
  in the topology of L\'evy triples.

  To show $(b_k / c_k) \to 0$ let $(\kappa_{\star L}, \kappa_{\star R})$ be the pair of left and right distribution functions associated with $\lambda_\star$.
  Under scaling note
  \begin{equation}\label{e:KappaScale0}
    \kappa_{\star L}^{b_k,c_k}(x) = \frac{c_k}{b_k}\kappa_{\star L}^{}(b_k x)\,,
    \quad\text{and}\quad 
    \kappa_{\star R}^{b_k,c_k}(x) = c_k^{}\,\kappa_{\star R}^{}(b_k x)\,,
  \end{equation}
  Now by monotonicity of $\kappa_{\star L}$ and universality of $\lambda_\star$ it follows that $\kappa_{\star L}(y) \to \infty$ as $y \to \infty$.
  Moreover, \eqref{e:UnivConv} and the continuity theorem (Theorem~\ref{t:LRconv}) imply that $\kappa_{\star L}^{b_k, c_k}(x)$ converges (along the chosen subsequence) to a finite limit for some $x$.
  This forces $(c_k / b_k) \to 0$ along the chosen subsequence, finishing the proof.
\end{proof}

The main idea behind the proof of Theorem~\ref{t:UniversalGW} is the existence of many universal L\'evy triples.
This is our next result.

\begin{proposition}\label{p:UniversalLT}
  There exist sequences $(b_k)$, $(c_k)$ such that
  the set of L\'evy triples of the form $(0, 0, \mu_\star)$ which are universal with respect to $(b_k)$, $(c_k)$
  is dense in the space of \emph{all} L\'evy triples.
\end{proposition}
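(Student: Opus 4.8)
\medskip\noindent\textbf{Proof proposal.}
The plan is to produce one universal L\'evy triple $\lambda_\star=(0,0,\mu_\star)$ together with explicit sequences $(b_k),(c_k)$ by a Doeblin-style interlacing construction (in the spirit of \cite{MenonPego08}), and then to deduce density from Lemma~\ref{l:UnivTail}. For the density reduction: an arbitrary L\'evy triple $(\alpha_0^0,\alpha_\infty^0,\mu_0)$ is approximated, in the weak-$\star$ topology of the associated measures $\kappa$ of~\eqref{e:kappaDef}, by triples of the form $(0,0,\mu_1)$ with $\mu_1=\mu_0|_{(\varepsilon,N)}+(\alpha_0^0/\varepsilon)\,\delta_\varepsilon+(\alpha_\infty^0/N)\,\delta_N$ (whose $\kappa$-measure $\to\kappa_{(\alpha_0^0,\alpha_\infty^0,\mu_0)}$ as $\varepsilon\downarrow0$, $N\uparrow\infty$). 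Then, fixing $R>\operatorname{supp}\mu_1$ and gluing $\nu_\star\defeq\mu_1$ on $(0,R]$, $\nu_\star\defeq\mu_\star$ on $(R,\infty)$, Lemma~\ref{l:UnivTail} makes $(0,0,\nu_\star)$ universal with the \emph{same} $(b_k),(c_k)$, while $\kappa_{(0,0,\nu_\star)}=\kappa_{(0,0,\mu_1)}+\one_{(R,\infty)}\,d\mu_\star$ differs from $\kappa_{(0,0,\mu_1)}$ by $\mu_\star(R,\infty)\to0$ as $R\to\infty$, because $\mu_\star$ is a genuine L\'evy measure. This shows the universal triples are dense, so it only remains to build $\mu_\star$.

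Fix a countable family $\{\mu^{(j)}\}_{j\in\N}$ of L\'evy measures with $\{(0,0,\mu^{(j)})\}$ dense in the separable space of L\'evy triples of Definition~\ref{d:LtConv0}, and a sequence $(j_m)$ in which every integer occurs infinitely often. Build $\mu_\star$, supported in $(1,\infty)$, inductively as a disjoint sum of ``copies''. At step $m$, having placed copies over windows $(\beta_1,\gamma_1),\dots,(\beta_{m-1},\gamma_{m-1})$, choose in order: a large $\beta_m\gg\gamma_{m-1}$; a much larger scale $s_m\gg\beta_m$; then $\gamma_m\gg s_m$; put $b_m^0\defeq1/s_m$ and let $\mu_\star$ on $(\beta_m,\gamma_m)$ equal the rescaling $(\mu^{(j_m)})^{b_m^0,c_m^0}$ (in the sense of~\eqref{e:ScaleLambda}) truncated to that window, where $c_m^0>0$ is chosen small enough that (a) $\mu_\star(\beta_m,\gamma_m)\le2^{-m}$, and (b) $c_{m'}\,\mu_\star(\beta_m,\gamma_m)\le2^{-m-m'}$ for every $m'<m$, with $c_{m'}\defeq1/c_{m'}^0$. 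Finally set $b_m\defeq1/b_m^0=s_m\to\infty$ and $c_m\defeq1/c_m^0\to\infty$. The truncated copy has finite mass even when $\mu^{(j_m)}$ does not, since in $\mu^{(j_m)}$-coordinates it is the restriction to $(\beta_m/s_m,\gamma_m/s_m)$; condition~(a) makes $\mu_\star$ a L\'evy measure.

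To see that $\lambda_\star$ is universal with $(b_m),(c_m)$, fix $j$ and work along the subsequence $\{m:j_m=j\}$, tracking the measures $\kappa$ of~\eqref{e:kappaDef} under $\lambda_\star\mapsto\lambda_\star^{b_m,c_m}$. The copies split into three groups. The central copy (window $m$) becomes exactly $\mu^{(j)}$ restricted to $(\beta_m/s_m,\gamma_m/s_m)\to(0,\infty)$, whose $\kappa$-measure $\to(x\wedge1)\,d\mu^{(j)}=\kappa^{(j)}$ in total variation, by L\'evy integrability of $\mu^{(j)}$ near $0$ and near $\infty$. The copies $m'<m$ are pushed toward $0$ (their support scales by $s_{m'}/s_m\ll1$), with total $\kappa$-mass at most $(c_m/b_m)\sum_{m'<m}\int_{(\beta_{m'},\gamma_{m'})}x\,d\mu_\star(x)$, which $\to0$ once $\beta_m$ (hence $s_m$) was taken large enough at step $m$, the sum being a fixed finite number there. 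The copies $m'>m$ are pushed toward $\infty$, with total $\kappa$-mass $c_m\sum_{m'>m}\mu_\star(\beta_{m'},\gamma_{m'})\le\sum_{m'>m}2^{-m-m'}\to0$ by~(b); in particular no mass escapes to the point $\infty$. Hence $\kappa_{\lambda_\star^{b_m,c_m}}\to\kappa^{(j)}$ weak-$\star$ on $[0,\infty]$ along $\{m:j_m=j\}$, i.e.\ $\lambda_\star^{b_m,c_m}\to(0,0,\mu^{(j)})$. Since $\{(0,0,\mu^{(j)})\}$ is dense and convergence of L\'evy triples is metrizable (characterized pointwise via distribution functions in Theorem~\ref{t:LRconv}), a diagonal argument upgrades this to: every L\'evy triple is a subsequential limit of $(\lambda_\star^{b_m,c_m})$. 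So $\lambda_\star$ is universal, which finishes the proof.

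\medskip\noindent\emph{Main obstacle.} The delicate point is the simultaneous bookkeeping in the inductive construction of $\mu_\star$: each copy must be ``wide'' (spanning scales from $\ll s_m$ up to $\gg s_m$) so that zooming in recovers the \emph{entire} target $\mu^{(j)}$, yet must carry mass small enough that $\mu_\star$ remains a L\'evy measure \emph{and} that, when one later zooms in on a far-off copy, the copies pushed to $\infty$ leave no residual mass---forcing the mass of copy $m$ to be chosen small relative to the already-fixed constants $c_1,\dots,c_{m-1}$. Checking that these (finitely many at each step) inequalities can all be met in the order $\beta_m,s_m,\gamma_m,c_m^0$---including when $\mu^{(j_m)}$ has infinite total mass, which is handled by the built-in truncation near $0$---is where the real care is needed; the limit computations with the $\kappa$-measures afterward are routine.
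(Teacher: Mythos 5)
Your overall strategy---a Doeblin-style interlacing of rescaled copies of a countable dense family $\{\mu^{(j)}\}$ into disjoint windows of $\mu_\star$, checked directly on the associated $\kappa$-measures of~\eqref{e:kappaDef}, with density then reduced to Lemma~\ref{l:UnivTail}---is a legitimate alternative to the paper's route (the paper fixes $(c_k)$ \emph{a priori} as in~\eqref{e:Cseq}, forms the Bernstein-transform series $\Phi_\star(q)=\sum_j c_j\inv\Phi_j(b_jq)$ of the packing Lemma~\ref{lem:uni1}, chooses $(b_k)$ \emph{last}, and converts~\eqref{eq:Phiscale} into L\'evy-triple convergence via Theorem~\ref{t:Cont}). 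But as written your construction has a genuine gap at its decisive step: nothing you impose forces $c_m/b_m\to0$, and without that the ``copies $m'<m$'' do not become negligible. Your bound for their rescaled $\kappa$-mass, $(c_m/b_m)\,S_{m-1}$ with $S_{m-1}\defeq\sum_{m'<m}\int_{(\beta_{m'},\gamma_{m'})}x\,d\mu_\star(x)$, is correct, and since $S_{m-1}$ does not tend to $0$ you genuinely need $c_m=o(b_m)$ (consistent with the necessity of $c_k/b_k\to0$ observed in the proof of Lemma~\ref{l:UnivTail}). In your ordering $\beta_m,s_m,\gamma_m,c_m^0$, however, conditions (a) and (b) only bound $c_m^0$ from \emph{above}, i.e.\ $c_m=1/c_m^0$ from \emph{below}; since $c_m$ is chosen after $b_m=s_m$, it may perfectly well be taken far larger than $s_m$, in which case the old copies deposit unbounded mass (or an atom at $0$) in the rescaled limit. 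So the claim that this term tends to $0$ ``once $\beta_m$ (hence $s_m$) was taken large enough'' does not follow from the stated conditions. The repair is to impose an explicit \emph{upper} bound on $c_m$, say $c_m\le s_m/(m\,S_{m-1})$, and to verify compatibility with (a), (b); this needs a quantitative estimate such as $\mu^{(j)}\bigl((\beta_m/s_m,1)\bigr)\le (s_m/\beta_m)\int_{(0,1)}x\,d\mu^{(j)}(x)$, so that taking $\beta_m$ large first (depending on $c_1,\dots,c_{m-1}$, $S_{m-1}$ and $\int_{(0,1)}x\,d\mu^{(j_m)}$) and then $s_m$ large makes the admissible interval for $c_m^0$ nonempty. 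This circular dependence is precisely what the paper's proof sidesteps by fixing the intensity scalings $(c_k)$ and the mass bounds $\int d\mu_k\le k$ \emph{before} choosing the dilations $(b_k)$.

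Two smaller points. In the density reduction, the atom $(\alpha_\infty^0/N)\,\delta_N$ contributes only $\alpha_\infty^0/N$ to the $\kappa$-measure (the weight $x\wedge1$ equals $1$ at $x=N>1$), so your $\mu_1$ loses the killing term in the limit; you want $\alpha_\infty^0\,\delta_N$ instead. With that correction---and your observation that $\mu_\star(R,\infty)\to0$ because $\int(x\wedge1)\,d\mu_\star<\infty$---the reduction of density to Lemma~\ref{l:UnivTail} is sound and is in the same spirit as the paper's use of that lemma. Finally, $c_m\to\infty$ is required by Definition~\ref{d:universalLevy} and must be imposed explicitly (one more smallness condition on $c_m^0$), which again has to be checked against the upper bound above; this is exactly the bookkeeping you flag as the ``main obstacle,'' and it is the part the written argument does not yet secure.
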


We postpone the proof of Proposition~\ref{p:UniversalLT} until the proof of Theorem~\ref{t:UniversalGW} is complete.

\begin{proof}[Proof of Theorem~\ref{t:UniversalGW}]
  Using Proposition~\ref{p:UniversalLT} we choose a L\'evy triple $\lambda_\star = (0, 0, \mu_\star)$  that is universal with sequences $(b_k)$, $(c_k)$.
  Using Lemma~\ref{l:UnivTail} we can, without loss of generality, assume
  \begin{equation*}
    \int_{(0, \infty)} d\mu_\star(x) < 1 \,.
  \end{equation*}

  Let
  \begin{equation}
  \tau_k = c_k\inv, \qquad h_k = b_k\inv\,,
  \end{equation}
  %
  and define a family size distribution $\hat\pi(j)$, $j\geq0$, by
  \begin{align}\label{e:DefHatpi}
  & \hat\pi(0)= 1-\int_\rplus\,d\mu_\star(x)\,,
  \\
  &\hat\pi(j) = \frac1{j-1}\int_{(j-2,j-1]} d\mu_\star(x)\,,\quad j\geq2 \,,
  \end{align}
  and set $\hat\pi(1)$ so that $\sum_{j\geq0}\hat\pi(j)=1$. 
  Define the coarse-grained L\'evy measure $\hat\mu$
  by
  \begin{equation}\label{e:DefHatmu}
  d\hat\mu(x) = \sum_{j\geq2} (j-1)\hat\pi(j)\,d\delta_j(x)
  = \sum_{j\geq2} \paren[\Big]{\int_{(j-2,j-1]} d\mu_\star(x)} \, d\delta_j(x) 
  \,.
  \end{equation}
  We claim that the L\'evy triple $\hat \lambda \defeq (0, 0, \hat \mu)$ is also universal.
  Once universality of $\hat \lambda$ is established, subsequential convergence of $Y^{(k)}$ follows immediately from Proposition~\ref{p:GWConv}.

  To prove universality of $\hat \lambda$, let $\lambda = (\alpha_0, \alpha_\infty, \mu)$ be an arbitrary L\'evy triple.
  By universality of $\lambda_\star$ we have
  \begin{equation*}
    \lambda_\star^{b_{k}, c_{k}} \to \lambda
    \quad\text{as $k \to \infty$ along some subsequence.}
  \end{equation*}
  For brevity, we use $k$ to index the above subsequence.

  Let $(\kappa_L, \kappa_R)$, $(\kappa_{\star L}, \kappa_{\star R})$ and $(\kappa_{\star L,k}, \kappa_{\star R, k})$ be the pairs of left and right distribution functions associated to the L\'evy triples~$\lambda$, $\lambda_\star$ and $\lambda^{b_k, c_k}_\star$ respectively.
  Since, $\lambda_\star^{b_k, c_k} \to \lambda$ by choice of our subsequence, we must have
  \begin{equation}\label{e:LimKappaRLstar}
  \frac{h_k}{\tau_k} \kappa_{\star L}\left(\frac x{h_k}\right) \to\kappa_L(x)\,,
  \qquad
  \frac1{\tau_k} \kappa_{\star R}\left(\frac x{h_k}\right) \to\kappa_R(x)\,,
  \end{equation}
  at all points of continuity
  along the same subsequence.
  We claim that, along the same subsequence and at the same points,
  we must have
  \begin{equation}\label{e:LimHatkappas}
  \frac{h_k}{\tau_k} \hat\kappa_L\left(\frac x{h_k}\right) \to\kappa_L(x)\,,
  \qquad
  \frac1{\tau_k} \hat\kappa_R\left(\frac x {h_k}\right) \to\kappa_R(x)\,,
  \end{equation}
  where $(\hat \kappa_L, \hat \kappa_R)$ are the pair of left and right distribution functions associated to $\hat \lambda$.

  To see this, observe that for any $z\in(2,\infty)$,
  \[
  \int_{(z,\infty)}d\mu_\star(x)
  \leq \sum_{j>z} \int_{(j-2,j-1]} d\mu_\star(x)
  \leq \int_{(z-2,\infty)}d\mu_\star(x)\,,
  \]
  which means
  \[
  \kappa_{\star R}(z) \leq \hat\kappa_R(z) 
  \leq \kappa_{\star R}(z-2) \,.
  \]
  Then for any two points of continuity $0<x_-<x$ for $\kappa_R$,
  whenever $h_k$ is small enough we have 
  $h_k\inv x_-< h_k\inv x-2<h_k\inv x$, whence
  \begin{equation}
   \kappa_R(x) \leq
    \liminf \frac1{\tau_k}\hat\kappa_R\left(\frac x{h_k}\right) 
    \leq 
   \limsup \frac1{\tau_k}\hat\kappa_R\left(\frac x{h_k}\right) 
  \leq
  \kappa_R(x_-) \,.
   \end{equation}
  Thus the second limit in \eqref{e:LimHatkappas} follows.

  To establish the first limit, observe that for any $z>2$,
  \[
  \int_{(0,z-1]} x\,d\mu_\star(x) \leq 
  \sum_{2\leq j\leq z} j\int_{(j-2,j-1]}d\mu_\star(x) \leq
  \int_{(0,z]}(x+2) d\mu_\star(x) \,,
  \]
  and this entails
  \[
  \kappa_{\star L}(z-1) \leq \hat\kappa_L(z) \leq \kappa_{\star L}(z) + 2\,.
  \]
  Using the argument in Lemma~\ref{l:UnivTail} we see $b_k / c_k = h_k / \tau_k \to 0$.
  Consequently, for any two points of continuity $x_-<x$ for $\kappa_L$, 
  as above we infer
  \begin{equation}
   \kappa_L(x_-) \leq
    \liminf \frac {h_k}{\tau_k} \hat\kappa_L\left(\frac x{h_k}\right) 
    \leq 
   \limsup \frac{h_k}{\tau_k}
   \left( \hat\kappa_L\left(\frac x{h_k}\right) +2\right)
  \leq
  \kappa_L(x) \,.
  \end{equation}
  The second limit in \eqref{e:LimHatkappas} follows, and this proves universality of the coarse grained measure~$\mu_\star$.
  This finishes the existence part of Theorem~\ref{t:UniversalGW}.

  To prove density, we use Lemma~\ref{l:UnivTail} to note that any probability measure on $\nplus$ with the same tail at $\hat \pi$ is also universal.
  Since all such probability measures are dense in the space of all probability measures on $\nplus$, we obtain density.
  This finishes the proof of Theorem~\ref{t:UniversalGW}.
  %
\end{proof}

It remains to prove Proposition~\ref{p:UniversalLT}.
Fix a sequence $c_k\to\infty$ that satisfies 
\begin{equation}\label{e:Cseq}
\sum_{j=1}^\infty \frac{j}{c_j} <1 \,, \qquad 
c_k \sum_{j>k}  \frac{j}{c_j} \to 0 \quad\text{as }k\to\infty \,.
\end{equation}
For example, $c_k= c \cramped{e^{k^2}}$ works for small enough $c>0$.
  The main tool used in the proof of Proposition~\ref{p:UniversalLT} is the following ``packing lemma'', which is the analog of Lemma~7.2 in~\cite{MenonPego08}.

\begin{lemma}[Packing lemma]\label{lem:uni1}
Let $\lambda_k = (0,0,\mu_k)$ be a sequence of L\'evy triples,
let $\Phi_k$ be the corresponding Bernstein transforms given by 
\[
\Phi_k(q) = \int_\rplus \paren[\big]{1-e^{-qx}}\,d\mu_k(x)\,,
\]
and assume that 
\begin{equation} \label{i:levybd}
\int_\rplus d\mu_k(x) \leq k \,,\quad\text{for all } k \in \N \,.
\end{equation}
Then there exists a sequence $b_k\to\infty $ 
such that the series
\begin{equation}\label{eq:Phiseries}
\Phi_\star(q) \defeq \sum_{j=1}^\infty  c_j\inv \Phi_j (b_jq)
\end{equation}
converges for each $q>0$ to a Bernstein function with the 
following property:
\begin{equation}\label{eq:Phiscale}
c_k \Phi_\star(b_k\inv q) - \Phi_k(q) \to 0 
\quad\mbox{as $k\to\infty$, \quad for all $q>0$\,.}
\end{equation}
The function $\Phi_\star$ is the Bernstein transform of a L\'evy triple
of the form $\lambda_\star=(0,0,\mu_\star)$, 
where $\mu_\star$ is a finite measure on $(0,\infty)$, with $\int_{\rplus}d\mu_\star(x)<1$. 
\end{lemma}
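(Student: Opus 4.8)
The plan is to construct the sequence $(b_k)$ greedily, exploiting the two hypotheses on $(c_k)$ in \eqref{e:Cseq} together with the bound \eqref{i:levybd}. First I would record the elementary estimates $0 \le \Phi_k(q) \le q\int_\rplus x\,d\mu_k(x)$ for small $q$ (so each $\Phi_k$ vanishes linearly at $0$) and, crucially, the sublinear bound $\Phi_k(q) \le \int_\rplus d\mu_k(x) \le k$ uniformly in $q$, since $1-e^{-qx} \le 1$. These two facts — linear decay near $0$ and a uniform ceiling of $k$ — are exactly what let one "pack" the rescaled pieces $c_j\inv\Phi_j(b_j q)$ into a convergent series without overlap. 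The idea is that choosing $b_k$ large forces $\Phi_k(b_k q)$ to saturate near its ceiling $k$ only for $q$ bounded away from $0$, while for the tail terms $j > k$ one uses smallness of $c_j\inv$; for the head terms $j < k$ one uses that $b_j$ is already fixed and $\Phi_j(b_j q) \le j$ is a fixed bounded function.

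The core construction: having chosen $b_1 < b_2 < \cdots < b_{k-1}$, pick $b_k$ so large that
\begin{equation*}
  c_k\inv \Phi_k(b_k q) + \sum_{j < k} c_j\inv \Phi_j(b_j q) \ \text{is suitably controlled on } [b_k\inv, \infty),
\end{equation*}
more precisely so that $b_k \ge 2 b_{k-1}$ and so that on the relevant ranges of $q$ the contribution of the already-fixed head terms, after rescaling by $c_k$ and $b_k\inv$, is negligible. Then for convergence of \eqref{eq:Phiseries} at fixed $q>0$: the terms with $b_j q$ small contribute at most $c_j\inv q \int x\,d\mu_j \le c_j\inv q \cdot j$ (using $\int d\mu_j \le j$ and... actually one needs $\int x\,d\mu_j$, which may be infinite, so instead simply bound $c_j\inv\Phi_j(b_jq) \le c_j\inv j$ and invoke $\sum j/c_j < \infty$ from \eqref{e:Cseq}). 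That single bound $\sum_j c_j\inv \Phi_j(b_j q) \le \sum_j j c_j\inv < 1$ gives convergence for every $q$ and simultaneously the bound $\Phi_\star(q) < 1$, hence (being a locally uniformly convergent sum of Bernstein functions, and the class of Bernstein functions is closed under nonnegative sums and pointwise limits by \cite[Cor.~3.7]{SchillingSongEA10}) $\Phi_\star$ is Bernstein. Writing $\Phi_\star(q) = \int_\rplus(1-e^{-qx})\,d\mu_\star(x)$ with no drift term (the drift is $\lim_{q\to\infty}\Phi_\star(q)/q = 0$ since $\Phi_\star$ is bounded), and $\int_\rplus d\mu_\star(x) = \lim_{q\to\infty}\Phi_\star(q) \le \sum_j jc_j\inv < 1$, gives the stated form $\lambda_\star = (0,0,\mu_\star)$ with $\mu_\star$ finite of mass $< 1$.

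For the scaling property \eqref{eq:Phiscale}: write
\begin{equation*}
  c_k\Phi_\star(b_k\inv q) - \Phi_k(q)
  = \sum_{j<k} \frac{c_k}{c_j}\Phi_j\!\left(\frac{b_j}{b_k}q\right)
    + \sum_{j>k} \frac{c_k}{c_j}\Phi_j\!\left(\frac{b_j}{b_k}q\right).
\end{equation*}
The tail sum is bounded by $c_k\sum_{j>k} j c_j\inv \to 0$ by the second condition in \eqref{e:Cseq}, using again $\Phi_j \le j$. The head sum is where $(b_k)$ must be chosen: since each $\Phi_j$ is Bernstein with $\Phi_j(0)=0$, we have $\Phi_j(b_j q / b_k) \le (b_j/b_k)\Phi_j'(0^+)\cdot$(something) — more safely $\Phi_j(s) \le \Phi_j(1)\cdot\max(s,1) $ fails, so instead use concavity: $\Phi_j(s) \le s\,\Phi_j'(0^+)$ when $\Phi_j'(0^+)<\infty$, but that may be infinite. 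The robust route, which I expect to be the main obstacle, is to choose $b_k$ (at stage $k$, after $c_k$ and the $b_j$ for $j<k$ are all fixed) large enough that $\frac{c_k}{c_j}\Phi_j(b_j q/b_k) < 2^{-j}\varepsilon_k$ for all $j < k$ and all $q$ in a suitable range, possible because $\Phi_j$ is continuous with $\Phi_j(0)=0$ and there are only finitely many $j$ to handle; letting $\varepsilon_k\to 0$ then kills the head sum. Here one must be slightly careful that "$q$ in a suitable range" can be taken locally uniform, but since we only need pointwise-in-$q$ convergence in \eqref{eq:Phiscale} this is not a real difficulty: fix $q$, and absorb the dependence on $q$ into the choice of $b_k$ via a diagonal-type argument over a countable dense set of $q$'s, or simply note $\Phi_j(b_jq/b_k) \le \Phi_j(b_j/b_k)\max(q,1)$ is false but $\Phi_j$ subadditive gives $\Phi_j(b_jq/b_k)\le \lceil q\rceil\Phi_j(b_j/b_k)$, which does go to $0$ as $b_k\to\infty$ for fixed $q$. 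This makes the head-term control clean and completes the construction.
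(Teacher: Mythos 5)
Your proposal is correct and follows essentially the same route as the paper: cancel the $j=k$ term exactly, bound the tail $\sum_{j>k}$ by $c_k\sum_{j>k}jc_j\inv\to0$ using \eqref{e:Cseq} and $\Phi_j\leq j$, and choose $b_k$ inductively so the finitely many head terms $j<k$ are small (your subadditivity bound $\Phi_j(b_jq/b_k)\leq\lceil q\rceil\,\Phi_j(b_j/b_k)$ handles the $q$-dependence cleanly, where the paper leaves this implicit). The only omission is the killing coefficient: writing $\Phi_\star(q)=\int_\rplus(1-e^{-qx})\,d\mu_\star(x)$ presupposes $a_\infty=0$ as well as $a_0=0$, and your boundedness argument only rules out the drift $a_0$; you should add that $\Phi_\star(0^+)\leq\sum_{j>k}jc_j\inv$ for every $k$ (each of the first $k$ summands vanishes as $q\to0^+$), hence $a_\infty=\Phi_\star(0^+)=0$, which is how the paper concludes $\lambda_\star=(0,0,\mu_\star)$.
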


\begin{proof}
As $\Phi_j(b_jq)\leq \Phi_j(\infty)\leq j$ for all $q>0$, the series
\eqref{eq:Phiseries} is uniformly bounded and converges for each $q$. 
We estimate the quantity in \eqref{eq:Phiscale} in two parts, to 
show how $b_k$ can be chosen. Note
\[
  \abs[\bigg]{ \Phi_k(q) - c_k \sum_{j=k+1}^\infty c_j\inv\Phi_j(b_k\inv b_j q) }
\leq c_k \sum_{j=k+1}^\infty jc_j\inv  \to0
\]
as $k\to\infty$, regardless of what $b_k$ is. Then because $\Phi_j(q)\to0$
as $q\to0$ for each $j$,
we may choose $b_k$ (inductively) so large that 
\[
c_k \sum_{j=1}^{k-1} c_j\inv\Phi_j(b_k\inv b_j q) < \frac1k\,.
\]
Then \eqref{eq:Phiscale} follows.  

The limit function $\Phi_\star$ is the Bernstein transform
of some L\'evy triple $\lambda_\star = (a_0,a_\infty,\mu_\star)$ 
by Theorem~\ref{t:Cont}. 
Because $\Phi_j(0^+)=0$, we infer 
\begin{equation*}
  \Phi_\star(0^+)\leq \sum_{j=k+1}^\infty jc_j\inv
\end{equation*}
for all $k$, hence $a_\infty=\Phi_\star(0^+)=0$. Furthermore, 
\[
\Phi_\star(\infty) \leq \sum_{j\geq 1}jc_j\inv <1\,,
\]
whence $a_0=0$ and $\int_\rplus d\mu_\star(x)<1$.
\end{proof}

Next we show that L\'evy triples of the form $(0, 0, \mu)$ are dense in the space of all L\'evy triples.
This is analogous to Lemma 7.3 in \cite{MenonPego08}.
\begin{lemma}\label{lem:uni2}
Let $\lambda=(a_0,a_\infty,\mu)$ be a L\'evy triple.
Then there is a sequence of measures $\mu_k$
that satisfy \eqref{i:levybd}, such that 
the triples
\[
\lambda_k=(0,0,\mu_k)\to \lambda \quad\text{ as } k\to\infty \,.
\]
\end{lemma}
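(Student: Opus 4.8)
The plan is to build $\mu_k$ from $\mu$ by truncating near the origin (to make the total mass finite) and then grafting on two atoms — one sliding to $0$, one escaping to $\infty$ — whose masses are tuned so that, in the compactified limit, they reproduce the coefficients $a_0$ and $a_\infty$. Recall from~\eqref{e:kappaDef} that the triple $(0,0,\mu_k)$ corresponds to the finite measure $\kappa_k = (x\wedge 1)\,d\mu_k$ on $[0,\infty]$, and that, by Definition~\ref{d:LtConv0}, $\lambda_k\to\lambda$ means precisely $\kappa_k\to\kappa$ weak-$\star$ on $[0,\infty]$, where $d\kappa = a_0\,d\delta_0 + a_\infty\,d\delta_\infty + (x\wedge1)\,d\mu$ is a finite measure by the L\'evy condition $\int(x\wedge1)\,d\mu<\infty$.

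Since the convergence only concerns the tail of the sequence, I would work with $k$ large (setting $\mu_k=0$ for the finitely many remaining $k$, which trivially satisfies~\eqref{i:levybd}). First I would fix a truncation level $\delta_k\downarrow 0$ with $\mu([\delta_k,\infty))\le k/2$; this is possible because $\mu([\delta,\infty))\le \delta\inv\int(x\wedge1)\,d\mu<\infty$ for every $\delta\in(0,1]$, so the thresholds $\delta_k^\ast\defeq\inf\{\delta>0:\mu([\delta,\infty))\le k/2\}$ are well-defined and tend to $0$ as $k\to\infty$ (choose $\delta_k$ slightly above $\delta_k^\ast$). Then I would set
\[
  \mu_k \defeq \mu|_{[\delta_k,\infty)} + m_k\,\delta_{\eps_k} + a_\infty\,\delta_k\,,
\]
with $\eps_k\to 0$ and $m_k$ chosen so that $\eps_k m_k = a_0$ and $m_k\le k/2$ — e.g. $\eps_k=(1+2a_0)/k$, $m_k = a_0 k/(1+2a_0)$. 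With these choices $\int d\mu_k = \mu([\delta_k,\infty)) + m_k + a_\infty \le k$ for $k$ large, so~\eqref{i:levybd} holds; in particular each $\mu_k$ is finite, so $\lambda_k=(0,0,\mu_k)$ is a legitimate L\'evy triple.

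The remaining step is to verify $\kappa_k\to\kappa$ weak-$\star$ on $[0,\infty]$. Since $\eps_k<1<k$, we have $\kappa_k = (x\wedge1)\,\mathbf{1}_{[\delta_k,\infty)}\,d\mu(x) + a_0\,\delta_{\eps_k} + a_\infty\,\delta_k$. Testing against $g\in C([0,\infty])$: the atom at $\eps_k$ contributes $a_0\,g(\eps_k)\to a_0\,g(0)$, the atom at $k$ contributes $a_\infty\,g(k)\to a_\infty\,g(\infty)$ (this is exactly where continuity of $g$ at the two endpoints of the compactification is used), and the truncated body converges because $\int g\,(x\wedge1)\,d\mu - \int_{[\delta_k,\infty)} g\,(x\wedge1)\,d\mu = \int_{(0,\delta_k)} g\,x\,d\mu$, which is $O\bigl(\int_{(0,\delta_k)} x\,d\mu\bigr)\to 0$ by dominated convergence. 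Summing the three contributions gives $\int g\,d\kappa_k\to\int g\,d\kappa$, as required.

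I do not anticipate a genuine obstacle here. The only subtlety worth flagging is that $\mu$ may carry infinite mass near $0$, so the truncation level $\delta_k$ is not free but constrained by the mass budget $k$; this is handled by the elementary observation that $\mu([\delta,\infty))<\infty$ for every $\delta>0$. Beyond that, the entire point is that the topology of Definition~\ref{d:LtConv0} is weak-$\star$ convergence on the \emph{compactified} half-line $[0,\infty]$, which is precisely what allows a shrinking atom at $\eps_k$ and a receding atom at position $k$ to legitimately stand in for the boundary masses $a_0\,\delta_0$ and $a_\infty\,\delta_\infty$.
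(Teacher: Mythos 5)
Your proof is correct, but it takes a more explicit route than the paper. The paper's own argument is two lines: it first asserts that triples of the form $(0,0,\mu)$ are dense in the space of all L\'evy triples, thereby reducing to the case $a_0=a_\infty=0$, and then simply truncates $\mu$ near the origin at levels $\eps_k$ chosen so that $\int_{\R^+}d\mu_k\le k/3$ (an implicit diagonal argument combines the two approximations). You instead handle the boundary coefficients by hand, grafting an atom of mass $m_k$ at $\eps_k$ with $\eps_k m_k=a_0$ and an atom of mass $a_\infty$ at position $k$, and you verify the weak-$\star$ convergence on $[0,\infty]$ directly. What your version buys is self-containedness: the paper's density claim for $(0,0,\mu)$-triples is exactly the sliding-atom construction you carry out, so you have in effect supplied the proof the paper leaves implicit, and you avoid the diagonalization. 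The one blemish is arithmetic: with your budgets $\mu([\delta_k,\infty))\le k/2$ and $m_k\le k/2$ you get $\int d\mu_k\le k+a_\infty$, not $\le k$, so \eqref{i:levybd} is not quite verified as written; the fix is trivial (allocate thirds of the budget, as the paper does, or require $\mu([\delta_k,\infty))\le k/2-a_\infty$ for $k>2a_\infty$ and take $\mu_k=0$ for the remaining finitely many $k$, as you already do for small $k$).
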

\begin{proof}
For the proof it suffices to consider $\lambda$ of the form $\lambda=(0,0,\mu)$,
because these are dense in the space of L\'evy triples. 
But then there exist $\eps_k\to0$ such that 
$\mu_k \defeq \one_{\set{x\geq \eps_k}}\mu$ satisfies
$\int_\rplus d\mu_k \leq \frac k3$. Evidently, 
$\lambda_k=(0,0,\mu_k)\to\lambda$
due to Definition~\ref{d:LtConv0}.
\end{proof}

With this we can prove Proposition~\ref{p:UniversalLT}.
\begin{proof}[Proof of Proposition~\ref{p:UniversalLT}]
  Because the space of finite measures on $[0,\infty]$ with the weak-$\star$
  topology is separable,
  the same is true for the space of L\'evy triples.
  Thus we can choose a sequence of L\'evy triples $(\bar\lambda_n)$ so that \emph{every} L\'evy triple $\lambda$ whatsoever is a limit of $(\lambda_k)$ along some subsequence.  

  Partition the integers into infinitely many subsequences and, using Lemma~\ref{lem:uni2}, select measures $\mu_k$ satisfying \eqref{i:levybd} such that $\lambda_k=(0,0,\mu_k)\to\bar\lambda_n$ along the $n^\text{th}$ subsequence. 
  Construct a sequence~$b_k$, a Bernstein function $\Phi_\star$ and its associated L\'evy triple $\lambda_\star$ using Lemma~\ref{lem:uni1}.
  We claim $\lambda_\star$ is universal with sequences $(b_k), (c_k)$.

  To see this, let $\Phi_\star^{b_k, c_k}$ be the Bernstein transform of the rescaled L\'evy triple $\lambda_\star^{b_k, c_k}$.
  Observe
  \begin{equation}\label{e:ScaleBernstein}
    \Phi_\star^{b_k, c_k}(q) = c_k \Phi_\star\paren[\Big]{ \frac{q}{b_k} } \,.
  \end{equation}
  Now given any L\'evy triple $\lambda$, choose a subsequence (indexed by $k$) along which $(\bar \lambda_k) \to \lambda$.
  If $\bar \Phi_k$ is the Bernstein transform of $\bar \lambda_k$, then the continuity theorem for Bernstein transforms (Theorem~\ref{t:Cont}) %
  guarantees $(\bar \Phi_k) \to \Phi$ pointwise.
  By~\eqref{eq:Phiscale} this implies $(\Phi^{b_k, c_k}_\star) \to \Phi$, and by the continuity theorem again we have $(\lambda_\star^{b_k, c_k}) \to \lambda$ along this subsequence.
  This finishes the proof.
\end{proof}

\section{Linearization and universality for critical CSBP}\label{s:LinUniv}


The dilational form of the scaling relations~\eqref{e:ScaleLambda}, \eqref{e:KappaScale0} and~\eqref{e:ScaleBernstein} hints at exact scaling relations that hold for the limiting CSBPs, which we develop in this section.
These relations establish that the \emph{nonlinear dynamics of CSBPs becomes linear and purely dilational} when expressed in terms of the L\'evy triple that represents the branching mechanism.
The map from L\'evy triples to finite (sub)critical CSBPs is bicontinuous
due to Theorem~\ref{t:CSBPCont} below, and this reduces the study
of scaling dynamics for these CSBPs to the study of scaling limits of 
dilation maps.  We use this correspondence here to prove
the existence of universal critical CSBPs whose subsequential scaling limits
include all possible finite (sub)critical CSBPs. 

Our results on linearization here are strongly analogous
to the results of \cite{MenonPego08} that showed that scaling dynamics
on the scaling attractor for solvable Smoluchowski equations
becomes linear and dilational in terms of a L\'evy-Khintchine representation.
The definition of the scaling attractor in \cite{MenonPego08} as the collection
of all limits of rescaled solutions was motivated by analogy to the notion of
infinite divisibility in probability. The analogy between infinite divisibility
and CSBPs in branching processes has long been evident, 
at least since the work of Grimvall. 
But this classical analogy does not appear to explain why 
the nonlinear dynamics of CSBPs (or the scaling attractor of Smoluchowski equations)
becomes linear in terms of L\'evy-Khintchine representations.

\subsection{Linearization of renormalized CSBP dynamics}

For finite (sub)critical CSBPs, the linearization property we are
talking about is stated in the following result.
It is actually a simple consequence of the known dynamics of the Laplace
exponent in terms of the L\'evy-Khintchine representation of the branching mechanism.

\begin{proposition}\label{p:Lin}
  Let $Z(x,t)$ be a CSBP with 
Laplace exponent $\vp(q,t)$ and branching mechanism
$\Psi$.  Let $b,c>0$. Then the rescaled CSBP given by 
\[
\tilde Z(x,t) = b\inv Z(bx,ct) 
\]
satisfies $\tilde Z(x,0)=x$
and has Laplace exponent
$\tilde\vp$ and branching mechansim $\tilde\Psi$ with 
\begin{equation}\label{tiPhiPsi}
\tilde\vp(q,t) = b\,\vp(b\inv q,ct)\,, \qquad \tilde\Psi(q) = cb\,\Psi(b\inv q)\,.
\end{equation}
If the branching mechanism $\Psi$ is (sub)critical, and 
given by \eqref{e:PsiLim} in terms of L\'evy triple
$(\alpha_0,\alpha_\infty,\mu)$, 
then the branching mechanism $\tilde\Psi$ is determined similarly by the 
L\'evy triple $(\tilde\alpha_0,\tilde\alpha_\infty,\tilde\mu)$ where
\begin{equation}\label{e:ScaleMuTilde}
\tilde\alpha_0 = cb\inv\,\alpha_0\,, \quad
\tilde\alpha_\infty = c\,\alpha_\infty\,,\quad
d\tilde\mu(x) = c\,d\mu(bx)\,,
\end{equation}
and this corresponds to left and right distribution functions given by
the scaling relations
\begin{equation}\label{e:KappaScale}
\tilde\kappa_L(x) = cb\inv\,\kappa_L(bx)\,,\quad
\tilde\kappa_R(x) = c\,\kappa_R(bx)\,.
\end{equation}
\end{proposition}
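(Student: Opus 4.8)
The plan is to reduce everything to the Laplace exponent $\vp$ and its backward equation \eqref{e:PhiEvol1}, and then to a change of variables in the L\'evy--Khintchine integral. First, $Z_0(x)=x$ gives $\tilde Z(x,0)=b\inv Z(bx,0)=x$ at once, and the branching property and the Markov property of $t\mapsto\tilde Z_t(x)$ are inherited directly from those of $Z$ (space-scaling by $b\inv$ is a bijection and $t\mapsto ct$ is a reparametrization), so $\tilde Z$ is again a CSBP. Using the Lamperti relation \eqref{e:laplace1},
\[
\E\paren[\big]{e^{-q\tilde Z_t(x)}}=\E\paren[\big]{e^{-(b\inv q)Z_{ct}(bx)}}=e^{-(bx)\,\vp(b\inv q,\,ct)}=e^{-x\cdot b\,\vp(b\inv q,\,ct)},
\]
so $\tilde\vp(q,t)=b\,\vp(b\inv q,ct)$, which is the first identity in \eqref{tiPhiPsi}.

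To obtain the branching mechanism, I would differentiate this formula in $t$ and invoke the backward equation \eqref{e:PhiEvol1} for $\vp$:
\[
\partial_t\tilde\vp(q,t)=bc\,\partial_t\vp(b\inv q,ct)=-bc\,\Psi\paren[\big]{\vp(b\inv q,ct)}=-bc\,\Psi\paren[\big]{b\inv\tilde\vp(q,t)},
\]
with $\tilde\vp(q,0)=b\,\vp(b\inv q,0)=q$. Comparing with \eqref{e:PhiEvol1} we read off $\tilde\Psi(q)=cb\,\Psi(b\inv q)$, the second identity in \eqref{tiPhiPsi}.

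For the L\'evy-triple and distribution-function identities, substitute the representation \eqref{e:PsiLim} for $\Psi$ into $\tilde\Psi(q)=cb\,\Psi(b\inv q)$ and rearrange: the quadratic and linear terms scale immediately to $\tfrac12(cb\inv\alpha_0)q^2$ and $c\alpha_\infty q$, and for the integral term the change of variables $x\mapsto x/b$ with the pushforward convention $d\mu(bx)$ used throughout the paper (i.e.\ $\int g(x)\,d\mu(bx)=\int g(y/b)\,d\mu(y)$) gives
\[
cb\int_{(0,\infty)}\frac{e^{-b\inv qx}-1+b\inv qx}{x}\,d\mu(x)=\int_{(0,\infty)}\frac{e^{-qx}-1+qx}{x}\,d\tilde\mu(x),\qquad d\tilde\mu(x)=c\,d\mu(bx).
\]
One checks $\int_{(0,\infty)}(x\wedge1)\,d\tilde\mu(x)=c\int_{(0,\infty)}\paren{(y/b)\wedge1}\,d\mu(y)<\infty$, so $(\tilde\alpha_0,\tilde\alpha_\infty,\tilde\mu)$ with $\tilde\alpha_0=cb\inv\alpha_0$, $\tilde\alpha_\infty=c\alpha_\infty$ is a genuine L\'evy triple whose branching mechanism, via \eqref{e:PsiLim}, is $\tilde\Psi$; since $\tilde\Psi'$ is Bernstein and the L\'evy triple representing a Bernstein function is unique (Theorem~\ref{t:BernsteinRep}), this is \eqref{e:ScaleMuTilde}. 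Finally \eqref{e:KappaScale} follows from \eqref{e:ScaleMuTilde} and the definitions of $\kappa_L,\kappa_R$ by the same substitution: $\tilde\kappa_L(x)=\tilde\alpha_0+\int_{(0,x]}z\,d\tilde\mu(z)=cb\inv\alpha_0+cb\inv\int_{(0,bx]}y\,d\mu(y)=cb\inv\kappa_L(bx)$, and likewise $\tilde\kappa_R(x)=\tilde\alpha_\infty+\int_{(x,\infty)}d\tilde\mu=c\,\kappa_R(bx)$.

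I do not expect a genuine obstacle here; as the paper notes, this is a simple consequence of the dynamics of the Laplace exponent. The only points requiring care are the consistent bookkeeping of the factors of $b$ and $c$ under the scaling/pushforward convention for $d\mu(bx)$, and the use of uniqueness of the L\'evy triple representing a Bernstein function (Theorem~\ref{t:BernsteinRep}), which is what legitimizes reading off $(\tilde\alpha_0,\tilde\alpha_\infty,\tilde\mu)$ as \emph{the} triple of $\tilde\Psi$ rather than merely one triple producing it.
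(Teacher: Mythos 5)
Your proposal is correct and follows essentially the same route as the paper: the Laplace-transform computation for $\tilde\vp$, identification of $\tilde\Psi$ via the backward equation \eqref{e:PhiEvol1}, and a change of variables in the representation \eqref{e:PsiLim} together with the definitions \eqref{def:kappaLR}. You merely spell out details the paper leaves implicit (the pushforward bookkeeping for $d\tilde\mu(x)=c\,d\mu(bx)$ and the appeal to uniqueness of the L\'evy triple via Theorem~\ref{t:BernsteinRep}), which is harmless.
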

\begin{remark}
  The scaling~\eqref{e:ScaleMuTilde} and~\eqref{e:KappaScale} above are exactly the same as the scaling relations~\eqref{e:ScaleLambda} and~\eqref{e:KappaScale0} in Section~\ref{s:Univ}.
\end{remark}

\begin{proof}
The first relation in \eqref{tiPhiPsi} follows from the computation
\[
e^{-x\tilde\vp(q,t)} = \E( e^{-q\tilde Z(x,t)}) = \E( e^{-qb\inv Z(bx,ct)})
=e^{-xb\vp(b\inv q,ct)}\,,
\]
and the second relation follows by considering the ODE 
\eqref{e:PhiEvol1} satisfied by $\vp$ and the corresponding one satisfied
by $\tilde\vp$ at $t=0$, recalling that $\vp(q,0)=q=\tilde\vp(q,0)$.
The relations involving L\'evy triples and associated left and right distribution
functions follow by comparing the respective representations for $\Psi$ and $\tilde\Psi$ from \eqref{e:PsiLim}, and using the definitions in \eqref{def:kappaLR}. 
\end{proof}

For the study of long-time scaling limits of CSBPs, we should take $b,c$
as functions of $t$ with $c(t)\to\infty$
or as sequences $(b_k)$, $(c_k)$ with $c_k\to\infty$. 
This study can be reduced to the study of scaling limits
of the purely dilational relations in \eqref{e:KappaScale} 
due to the following continuity theorem. 
The proof of this theorem is essentially similar to (but simpler than)
the proof of Propositions~\ref{p:BTConv} and \ref{p:GWConv} for scaling limits
of Galton-Watson processes. 

For brevity in expressing our result, let us say that 
the L\'evy triple $\lambda=(\alpha_0,\alpha_\infty,\mu)$
in \eqref{e:PsiLim} \textit{generates} the branching mechanism $\Psi$
and the corresponding (finite, subcritical) CSBP $Z$. 

\label{ppCSBPCont}
%

\begin{theorem}[Continuity theorem for finite (sub)critical CSBPs]
\label{t:CSBPCont}
Let $(Z_k)$ be a sequence of finite (sub)critical CSBPs
generated by L\'evy triples 
$\lambda_k=(a^k_0,a^k_\infty,\mu_k)$. 

(i) Suppose that $(\lambda_k)$ converges to some L\'evy triple
$\lambda$ in the sense of Definition~\ref{d:LtConv0}. 
Then the finite-dimensional distributions of $Z_k$ converge
to those of the finite (sub)critical CSBP $Z$ generated by $\lambda$.

(ii) Conversely, suppose the one-dimensional distributions of $Z_k$ converge
to those of some finite process $Z$ such that $\P\{Z_{t_0}(x_0)>0\}>0$
for some $x_0,t_0>0$. Then the sequence $\lambda_k$ converges to some
L\'evy triple $\lambda$, and $Z$ may be taken as the finite (sub)critical
CSBP generated by $\lambda$.
\end{theorem}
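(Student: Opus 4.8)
The plan is to reduce the continuity theorem for CSBPs to the already-established machinery for Laplace exponents: the ODE \eqref{e:PhiEvol1} linking $\varphi$ to $\Psi$, the continuity theorem for Bernstein transforms (Theorem~\ref{t:Cont}), and the correspondence between L\'evy triples, branching mechanisms, and Laplace exponents developed in Section~\ref{s:CSBPintro} and Proposition~\ref{p:BrMechConv}. Throughout, recall that if $\lambda_k=(a_0^k,a_\infty^k,\mu_k)$ generates $Z_k$, then the associated branching mechanism $\Psi_k$ is given by \eqref{e:Psi} (equivalently $\Psi_k'$ is the Bernstein function of $\lambda_k$), and the Laplace exponent $\varphi_k(q,t)$ solves $\partial_t\varphi_k=-\Psi_k(\varphi_k)$, $\varphi_k(q,0)=q$, with $\E(e^{-qZ_k(x,t)})=e^{-x\varphi_k(q,t)}$.

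First I would prove (i). Assume $\lambda_k\to\lambda$ in the sense of Definition~\ref{d:LtConv0}, i.e.\ the associated measures $\kappa_k\to\kappa$ weak-$\star$ on $[0,\infty]$; write $(\alpha_0,\alpha_\infty,\mu)$ for the triple attached to $\kappa$ via \eqref{e:KappaMu}. By essentially the computation in part~1 of the proof of Proposition~\ref{p:BrMechConv} (with $\scalek\mu$ replaced by $\mu_k$ and no rescaling, so $h_k=0$ formally), one gets $\Psi_k(q)\to\Psi(q)$ for every $q\in[0,\infty)$, where $\Psi$ has the form \eqref{e:PsiLim}, together with local uniform convergence of $\Psi_k'$ on $(0,\infty)$ by the Montel-theorem argument there. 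Next, the convergence $\varphi_k(q,t)\to\varphi(q,t)$ is exactly the Euler-scheme-type argument of Proposition~\ref{p:BTConv}(i), only simpler: here the equations for $\varphi_k$ and $\varphi$ are genuine ODEs rather than forward-Euler schemes, so one writes $\varphi_k(q,t)-\varphi(q,t)=-\int_0^t\bigl(\Psi_k(\varphi_k)-\Psi(\varphi)\bigr)\,ds$, splits into $\Psi_k(\varphi_k)-\Psi(\varphi_k)$ and $\Psi(\varphi_k)-\Psi(\varphi)$, uses $0\le\varphi_k(\cdot,s)\le q$ together with $\sup_{[0,q]}|\Psi_k-\Psi|\to0$ for the first and monotonicity/convexity of $\Psi$ (so $\Psi$ is Lipschitz on $[0,q]$) for the second, and closes with Gronwall. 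Then $\varphi(\cdot,t)$ is Bernstein as a pointwise limit of Bernstein functions \cite[Cor.~3.7]{SchillingSongEA10}, and finally the finite-dimensional distributions converge: for one-dimensional marginals, $\E(e^{-qZ_k(x,t)})=e^{-x\varphi_k(q,t)}\to e^{-x\varphi(q,t)}=\E(e^{-qZ(x,t)})$, and higher-dimensional convergence follows by the Markov property and independent-increments structure exactly as in the proof of Proposition~\ref{p:GWConv}(i) (or \cite[page~280]{Lamperti67a}).

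For (ii), assume the one-dimensional distributions of $Z_k$ converge to those of a finite process $Z$ with $\P\{Z_{t_0}(x_0)>0\}>0$. Then $\varphi_k(q,t_0)\to\varphi(q,t_0)$ for all $q$, where $e^{-x_0\varphi(q,t_0)}=\E(e^{-qZ_{t_0}(x_0)})$, and the hypothesis forces $\varphi(q_0,t_0)>0$ for some $q_0>0$. The key is a compactness bound on $\{\kappa_k\}$: using $\varphi_k(q,s)\ge\varphi_k(q,t_0)$ for $s\le t_0$ (monotonicity in $t$) together with $\varphi_k(q,t_0)=q-\int_0^{t_0}\Psi_k(\varphi_k(q,s))\,ds\le q$, one gets $t_0\,\Psi_k(\varphi_k(q_0,t_0))\le q_0$, hence (choosing $q$ with $\varphi(q_0,t_0)/2$ below the limit and passing to large $k$) the sequence $\{\Psi_k(\varphi(q_0,t_0)/2)\}$ is bounded; then the argument around \eqref{e:KappaKBd} in Proposition~\ref{p:BrMechConv}(ii) shows $\sup_k\ip{1}{\kappa_k}<\infty$, so $\{\kappa_k\}$ is weak-$\star$ precompact on $[0,\infty]$. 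Along any weak-$\star$ convergent subsequence $\kappa_k\to\kappa$, part~(i) applies to give a limiting $\Psi$ of the form \eqref{e:PsiLim} and a limiting $\varphi$ solving \eqref{e:PhiEvol1}; evaluating that ODE at $t=0$ shows $\Psi$ is determined by $\varphi$, and since $\Psi'$ is the Bernstein transform of $(\alpha_0,\alpha_\infty,\mu)$ and the Bernstein transform is injective (Theorem~\ref{t:Cont}, or Theorem~\ref{t:BernsteinRep}), $\kappa$ is uniquely determined by $Z$'s one-dimensional law. Hence the whole sequence $(\lambda_k)$ converges, and by part~(i) the generated CSBP $Z$ has the stated finite-dimensional distributions.

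The main obstacle I anticipate is the compactness step in (ii): extracting a quantitative bound on $\sup_k\ip{1}{\kappa_k}$ purely from convergence of one-dimensional Laplace transforms at a single time $t_0$, without a priori control on the drift parts $\beta_0^k(t)$ or on whether mass escapes to $0$ or $\infty$ in the triples. This is handled exactly as in Propositions~\ref{p:BrMechConv}(ii) and~\ref{p:BTConv}(ii) — the inequalities $f_h\ge f_0$ there have the trivial analog $\Psi_k'(q)=\ip{f_0}{\kappa_k}$ with $\inf f_0>0$, so the chain $(\inf f_0)\ip{1}{\kappa_k}\le\Psi_k'(q)\le\Psi_k(2q)/q$ together with the boundedness of $\{\Psi_k(2q)\}$ closes it — but one must be careful that the positivity hypothesis $\P\{Z_{t_0}(x_0)>0\}>0$ is genuinely used to ensure $\varphi(q_0,t_0)>0$, since otherwise the bound is vacuous (as in the degenerate case where $Z_{t_0}\equiv0$).
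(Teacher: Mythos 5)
Your proposal is correct and follows essentially the same route as the paper: part (i) via pointwise convergence of the branching mechanisms (the paper cites Theorem~\ref{t:LRconv} directly where you re-run the computation of Proposition~\ref{p:BrMechConv}, which amounts to the same thing since $\Psi_k'$ is the Bernstein transform of $\lambda_k$), then ODE stability/Gronwall for the Laplace exponents and Lamperti's argument for finite-dimensional distributions; part (ii) via the positivity hypothesis giving a bound on $\Psi_k$ at a fixed positive argument, hence $\sup_k\ip{1}{\kappa_k}<\infty$, weak-$\star$ precompactness, and uniqueness of subsequential limits through the bijective correspondence between $\varphi$, $\Psi$ and the L\'evy triple. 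The only differences are presentational (the paper leaves the Gronwall step as ``straightforward''), so there is nothing to fix.
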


\begin{proof}
To prove (i), suppose $\lambda_k$ converges to $\lambda$.
Then, due to the second continuity theorem~\ref{t:LRconv},
the branching mechanisms $\Psi_k$ generated by $\lambda_k$ 
converge pointwise to the branching mechanism $\Psi$ generated by $\lambda$.
Due to the representation formula~\eqref{def:Bprimitive} 
and an argument using Montel's theorem 
analogously to the proof of Proposition~\ref{p:BrMechConv},
the convergence occurs locally uniformly for every derivative. 
Then it is straightforward to show that because the Laplace exponents
$\vp_k$ of $Z_k$ satisfy
\begin{equation}\label{e:phikevol}
\partial_t\vp_k(q,t) = -\Psi_k(\vp_k(q,t)) \,,\qquad \vp_k(q,0)=q \,,
\end{equation}
one has $\vp_k(q,t)\to\vp(q,t)$ as $k\to\infty$ for each $q>0$, $t\geq0$,
where $\vp$ satisfies \eqref{e:PhiEvol}. 
In particular, $\vp$ is the Laplace exponent of the CSBP $Z$ generated by $\lambda$.
This implies that for each $x,t>0$, the Laplace tranform of $Z_k(x,t)$
converges to that of $Z(x,t)$. 
This proves the convergence of one-dimensional distributions.
The convergence of finite-dimensional distributions follows
using arguments
similar to Lamperti~\cite[page 280]{Lamperti67a}.

To prove (ii), assume the one-dimensional distributions
of $Z_k$ converge to those of some finite process $Z$ such that
$\P\{Z(x_0,t_0)>0\}>0$ for some $x_0,t_0>0$.
Then because the Laplace transforms converge, i.e., because
\[
e^{-x \vp_k(q,t)}=
\E(e^{-qZ_k(x,t)}) \to \E(e^{-qZ(x,t)})
\]
for each $x,t>0$, we have that
\[
\vp(q,t) \defeq \lim_{k\to\infty} \vp_k(q,t)  
\]
exists for each $q,t>0$. Taking $t=t_0$ we find that $\vp(q,t_0)>0$
for all $q>0$.  For small enough $q>0$, then, $2q<\vp(\hat q,t_0)$
for some $\hat q$, and because $\Psi_k$ is increasing and convex,
for large enough $k$ we find $2q<\vp_k(\hat q,t_0)$, 
hence by integrating~\eqref{e:phikevol} we find
\[
\Psi_k(2q) \leq \frac1{t_0}\int_0^{t_0} \Psi_k(\vp_k(\hat q,t))\,dt \leq 
\frac{\hat q}{t_0}\,.
\]
Hence with $\kappa_k$ determined from $\lambda_k$ as in~\eqref{e:kappaDef}
and $g_q(x)$ defined by \eqref{d:gq}, we find
\[
\ip{g_q}{\kappa_k} = \Psi_k'(q) \leq q\Psi_k(2q) \leq \frac{ q\hat q}{t_0} \,.
\]
As in the proof of the continuity theorem~\ref{t:Cont} (converse part), 
this implies
that $\ip{1}{\kappa_k}$ is bounded. Hence the sequence of L\'evy measures
$(\lambda_k)$ is precompact. Along any subsequence that converges
to some L\'evy triple $\lambda$, we may invoke part (i) to assert that 
the finite-dimensional distributions of $Z_k$ converge to those of the
finite (sub)critical CSBP generated by $\lambda$.  The one-dimensional
distributions of this CSBP are then the same as those of $Z$,
independent of the subsequence. It follows $\lambda$ is unique,
and the whole sequence $(\lambda_k)$ converges.
\end{proof}

%
%
%

\subsection{Existence of universal critical CSBPs}

The continuity theorem~\ref{t:CSBPCont} could serve as the basis for a comprehensive
theory of long-time scaling limits of critical CSBP, but such a study
is beyond the scope of the present paper.
A large number of results exist in the classical literature that cover 
supercritical and subcritical cases; see~\cite{Grey74,VatutinZubkov85,VatutinZubkov93,AthreyaNey04,Lambert07,Kyprianou14}, e.g., for further details.
For relatively recent results on critical cases we refer to \cite{Pakes08,Pakes10,IyerLegerEA15}.
Our paper \cite{IyerLegerEA15} provides necessary and sufficient criteria for 
approach to self-similar form for critical CSBP that become extinct 
almost surely (having branching mechanism satisfying Grey's condition),
under a quite general assumption on the scaling that is taken,
but assuming there is a unique limit as $t\to\infty$.

What we will point out here, however, is that the existence of certain
\emph{universal} critical CSBPs is now a simple consequence of our study
of universal L\'evy triples and Galton-Watson family size distributions in Section~\ref{s:Univ}.
This observation gives a precise meaning to a remark made by Grey~\cite{Grey74} 
to the effect that a large class of 
``critical and subcritical processes $\ldots$
do not seem to lend themselves to suitable scaling'' 
which yields a well-defined limit.

\begin{theorem}\label{t:UniversalCSBP}
There exists a finite critical CSBP $Z_\star$ that is universal in the following sense:
There exist sequences $(b_k), (c_k) \to \infty$ such that for any finite (sub)critical branching process $\tilde Z$
there exists a subsequence along which the finite dimensional distributions of $Z_\star^{b_k, c_k}$ converge to those of $\tilde Z$.
Here $Z_\star^{b_k, c_k}$ is the rescaled process defined by
\begin{equation*}
  Z^{b_k, c_k}_\star(x, t) \defeq b_k\inv Z_\star( b_k x, c_k t ) \,.
\end{equation*}
\end{theorem}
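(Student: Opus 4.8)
The plan is to transfer the universality of L\'evy triples (Proposition~\ref{p:UniversalLT}) through the bicontinuous correspondence between L\'evy triples and finite (sub)critical CSBPs established in Theorem~\ref{t:CSBPCont}, using the linearization of scaling dynamics in Proposition~\ref{p:Lin} to identify the dilational rescaling of L\'evy triples with the dilational rescaling $Z_\star^{b_k,c_k}(x,t) = b_k\inv Z_\star(b_k x, c_k t)$ of CSBPs. Concretely, first I would invoke Proposition~\ref{p:UniversalLT} to obtain sequences $(b_k),(c_k)\to\infty$ and a universal L\'evy triple $\lambda_\star=(0,0,\mu_\star)$ with $\int_\rplus d\mu_\star<\infty$, so that $\lambda_\star^{b_k,c_k}\to\lambda$ along a subsequence for every L\'evy triple $\lambda$. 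Since $\int_\rplus d\mu_\star<\infty$, the L\'evy triple $\lambda_\star$ generates, via \eqref{e:PsiLim}, a critical branching mechanism $\Psi_\star$ (the quadratic and linear coefficients vanish because $\alpha_0=\alpha_\infty=0$), and hence a finite critical CSBP $Z_\star$; this is the candidate universal process.

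Next I would check that the scaling relation matches. By Proposition~\ref{p:Lin} applied with $b=b_k$, $c=c_k$, the rescaled CSBP $Z_\star^{b_k,c_k}(x,t)=b_k\inv Z_\star(b_k x, c_k t)$ is again a CSBP with $Z_\star^{b_k,c_k}(x,0)=x$, whose branching mechanism is generated by the L\'evy triple $(\,c_k b_k\inv\alpha_0,\ c_k\alpha_\infty,\ c_k\,d\mu_\star(b_k\cdot)\,)$ --- but this is exactly $\lambda_\star^{b_k,c_k}$ as defined in \eqref{e:ScaleLambda}. So $Z_\star^{b_k,c_k}$ is the finite (sub)critical CSBP generated by $\lambda_\star^{b_k,c_k}$.

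Now I would fix an arbitrary finite (sub)critical CSBP $\tilde Z$; it is generated by some L\'evy triple $\lambda$ (using the representation~\eqref{e:Psi} of the branching mechanism of a finite (sub)critical CSBP and the relation $x\,d\pi=d\mu$, $a_\infty=\Psi'(0^+)$). By universality of $\lambda_\star$ there is a subsequence along which $\lambda_\star^{b_k,c_k}\to\lambda$ in the sense of Definition~\ref{d:LtConv0}. Applying Theorem~\ref{t:CSBPCont}(i) to the sequence of CSBPs $Z_\star^{b_k,c_k}$, generated by $\lambda_\star^{b_k,c_k}\to\lambda$, yields that the finite-dimensional distributions of $Z_\star^{b_k,c_k}$ converge to those of the finite (sub)critical CSBP generated by $\lambda$, namely $\tilde Z$. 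This proves the theorem.

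The only genuinely delicate point, and the one I would be most careful about, is the bookkeeping of the two different uses of ``generated by'': Proposition~\ref{p:Lin} tells us how the L\'evy triple transforms under the CSBP rescaling, while Proposition~\ref{p:UniversalLT} tells us how a fixed triple's dilates behave; I must make sure the scaling conventions in \eqref{e:ScaleLambda} and in the definition of $Z^{b_k,c_k}_\star$ agree precisely (they do, by the Remark after Proposition~\ref{p:Lin}), and that $\lambda_\star=(0,0,\mu_\star)$ with $\int_\rplus d\mu_\star<\infty$ really does generate a \emph{critical} rather than merely subcritical CSBP, which is immediate since then $\alpha_\infty=0$ forces $\Psi'_\star(0^+)=0$. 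Everything else is a direct appeal to the already-proven continuity and universality results, so no new estimates are needed.
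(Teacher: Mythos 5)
Your proposal is correct and follows essentially the same route as the paper: choose a universal L\'evy triple $(0,0,\mu_\star)$ via Proposition~\ref{p:UniversalLT}, let $Z_\star$ be the critical CSBP it generates, identify the rescaled process $Z_\star^{b_k,c_k}$ with the CSBP generated by $\lambda_\star^{b_k,c_k}$ through Proposition~\ref{p:Lin}, and conclude by the continuity theorem (Theorem~\ref{t:CSBPCont}). The paper's proof is just a terser version of the same argument, and your careful matching of the scaling conventions and the criticality check are exactly the details it leaves implicit.
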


\begin{proof}[Proof of Theorem~\ref{t:UniversalCSBP}]
  Using Proposition~\ref{p:UniversalLT} choose $\lambda_\star=(0,0,\mu_\star)$ 
  to be a universal L\'evy triple with sequences $b_k,c_k\to\infty$.
  Let $Z_\star$ be the critical CSBP generated by $\lambda_\star$.
  The theorem now follows immediately from the continuity theorem (Theorem~\ref{t:CSBPCont}).
\end{proof}

\begin{remark*}
Let $\varphi_\star$ and $\tilde \varphi$ be the Laplace exponents of $Z_\star$ and $\tilde Z$ respectively.
Then, along the subsequence for which the above convergence holds we have
\[
b_k\, \vp_\star(b_k\inv q,c_kt) \to \tilde\vp(q,t) \quad\text{for all $q,t\in [0,\infty)$}.
\]
For each $t>0$, the functions $\vp_\star(\cdot,t)$ and $\tilde\vp(\cdot,t)$
are the Bernstein transforms of respective L\'evy triples of the form
$(\beta_{\star0}(t),0,\nu_{\star,t})$ and $(\tilde\beta_0(t),0,\tilde\nu_t)$, 
with 
\[
\vp_\star(q,t)=\beta_{\star0}(t)q+\int_0^\infty (1-e^{-qx})d\nu_{\star,t}(x)
\]
and a similar expression for $\tilde\vp$, cf.~\eqref{e:PhiBern1}.
We may note that 
\[
\beta_{\star0}(t)=\exp(-\Psi_\star'(\infty)t)\,,
\qquad
\Psi_\star'(\infty) = \int_0^\infty d\mu_\star(x) <\infty\,,
\]
and that $\beta_{\star0}(c_kt)\to0$ as $k\to\infty$.
Due to the continuity theorem~\ref{t:Cont}, the convergence above corresponds to 
a L\'evy-convergence property: for each $t>0$, as $k\to\infty$,
\[
(x\wedge1) b_k d\nu_{\star,c_k t}(b_k x) \to \beta_0(t)\delta_0+(x\wedge1)d\tilde\nu_t(x)
\quad\mbox{weak-$\star$ on $[0,\infty]$.}
\]
\end{remark*}



%
%

%
%
%
  \appendix
  \section{Continuity theorems for Bernstein transforms of L\'evy triples.}\label{s:CT}

  The L\'evy-convergence requirement (Definition~\ref{d:Lconv}) that is used in all our results relates to a natural topology of L\'evy triples associated with subordinators.
  The purpose of this appendix is to expand on this and prove a couple of continuity theorems relating convergence of L\'evy triples to pointwise convergence of the associated Bernstein functions.
  These theorems are variants of the classical continuity theorem for Laplace transforms, but do not appear to be widely known.

  
  \begin{definition}
    We say $(a_0, a_\infty, \mu)$ is a \emph{L\'evy triple} if $\mu$ is a (nonnegative) measure on $\R^+=(0,\infty)$ and we have
    \begin{equation}\label{e:LevyTripFiniteness}
      a_0 \geq 0 \,,
      \quad a_\infty \geq 0 \,,
      \quad\text{and}\quad
      \int_{\R^+} ( x \varmin 1) \, d\mu(x) < \infty \,.
    \end{equation}
    A measure $\mu$ satisfying the above is called a \emph{L\'evy measure}.
  \end{definition}

For the next definition, recall that a smooth function $g: \R^+ \to \R$ is said to be \emph{completely monotone} if $(-1)^n g^{(n)} \geq 0$ for all integer $n\geq0$.  

\begin{definition}
A function $f:\R^+\to\R$ is  \emph{Bernstein}  if it is smooth, nonnegative, and
its derivative $f'$ is completely monotone.
\end{definition}

The recent book of Schilling et al.~\cite{SchillingSongEA10} develops the theory of Bernstein functions extensively. The main representation theorem regarding these functions
(see Theorem~3.2 of \cite{SchillingSongEA10})
is the following variant of Bernstein's theorem (which states that $g$ is completely monotone if and only if it is the Laplace transform of some Radon measure on $[0,\infty)$).  

\begin{theorem}\label{t:BernsteinRep}
A function $f\colon\R^+\to\R$ is Bernstein if and only if it has the representation
  \begin{equation}\label{e:BernsteinDef}
    f(q)
      = a_0 q + a_\infty
	+ \int_{\R^+} \paren[\big]{1 - e^{-qx}} \, d\mu(x) \,,
  \end{equation}
  for some L\'evy triple $(a_0,a_\infty,\mu)$. In particular, the triple $(a_0,a_\infty,\mu)$
  determines $f$ uniquely and vice versa.
\end{theorem}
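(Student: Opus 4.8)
The plan is to reduce both directions to the classical Bernstein representation for completely monotone functions, which the excerpt has already recalled.

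\emph{Easy direction.} First I would check that any $f$ given by \eqref{e:BernsteinDef} is Bernstein. Differentiating under the integral sign—justified by dominated convergence, since $x^n e^{-qx}\le C_{q,n}(x\wedge1)$ uniformly for $q$ in a neighbourhood of any point of $\R^+$ and $\int_{\R^+}(x\wedge1)\,d\mu<\infty$—shows that $f$ is smooth with $f'(q)=a_0+\int_{\R^+}x e^{-qx}\,d\mu(x)$. This is the Laplace transform of the Radon measure $a_0\delta_0+x\,d\mu(x)$ on $[0,\infty)$, so $f'$ is completely monotone by Bernstein's theorem. Since $1-e^{-qx}\ge0$ and $a_0,a_\infty\ge0$ we have $f\ge0$, hence $f$ is Bernstein.

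\emph{Hard direction.} Conversely, suppose $f$ is Bernstein. Then $f'$ is completely monotone, so Bernstein's theorem furnishes a unique Radon measure $\rho$ on $[0,\infty)$ with $f'(q)=\int_{[0,\infty)}e^{-qx}\,d\rho(x)$ for $q>0$. Because $f'\ge0$, the function $f$ is nondecreasing and bounded below by $0$, so $f(0^+)\defeq\lim_{q\to0^+}f(q)$ exists and is finite. Integrating $f'$ from $q_0$ to $q$ and interchanging the order of integration by Tonelli (the integrand is nonnegative) gives
\[
f(q)-f(q_0)=\rho(\{0\})(q-q_0)+\int_{(0,\infty)}\frac{e^{-q_0 x}-e^{-qx}}{x}\,d\rho(x).
\]
Letting $q_0\downarrow0$ and using monotone convergence, I would set $a_0\defeq\rho(\{0\})$, $a_\infty\defeq f(0^+)$, and $d\mu(x)\defeq x^{-1}\,d\rho(x)$ on $(0,\infty)$, which yields precisely \eqref{e:BernsteinDef}. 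It then remains to verify that $\mu$ is a L\'evy measure: since every other term in \eqref{e:BernsteinDef} is finite, $\int_{(0,\infty)}(1-e^{-qx})\,d\mu(x)<\infty$, and evaluating at $q=1$ together with the elementary two-sided bound $(1-e^{-1})(x\wedge1)\le 1-e^{-x}\le x\wedge1$ gives $\int_{(0,\infty)}(x\wedge1)\,d\mu(x)<\infty$.

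\emph{Bijectivity.} Finally, the triple clearly determines $f$ through \eqref{e:BernsteinDef}; for the converse, if $f$ has two such representations then they have the same derivative, so $a_0\delta_0+x\,d\mu$ and $a_0'\delta_0+x\,d\mu'$ have equal Laplace transforms, forcing $a_0=a_0'$ and $\mu=\mu'$ by uniqueness in Bernstein's theorem, and then $a_\infty=a_\infty'=f(0^+)$ because $a_0q$ and $\int(1-e^{-qx})\,d\mu$ both tend to $0$ as $q\to0^+$ (by monotone convergence, $1-e^{-qx}\le x\wedge1$ being integrable). The main obstacle is the hard direction, and within it the only genuinely delicate points are the interchange of integration and the passage $q_0\to0$, both handled by Tonelli/monotone convergence since everything is nonnegative, and the extraction of the finiteness condition $\int(x\wedge1)\,d\mu<\infty$, for which testing at a single value $q=1$ and the comparison $1-e^{-x}\asymp x\wedge1$ suffice; the rest is a direct appeal to the classical Bernstein representation.
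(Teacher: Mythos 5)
Your proof is correct. Note that the paper does not prove this statement at all: it is quoted directly from the literature (Theorem~3.2 of \cite{SchillingSongEA10}), so there is no in-paper argument to compare against. Your route—apply the classical Hausdorff--Bernstein--Widder theorem to $f'$, integrate over $[q_0,q]$, split off the atom of the representing measure at $0$ as the drift $a_0$, identify $a_\infty=f(0^+)$ and $d\mu(x)=x^{-1}\,d\rho(x)$, and deduce $\int_{\R^+}(x\wedge 1)\,d\mu(x)<\infty$ by testing at $q=1$ with the comparison $(1-e^{-1})(x\wedge1)\le 1-e^{-x}\le x\wedge 1$—is exactly the standard proof found in that reference. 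The delicate points (the Tonelli interchange, the monotone passage $q_0\downarrow 0$, and the uniqueness of the triple via uniqueness of the representing measure for $f'$ plus $a_\infty=f(0^+)$) are all handled correctly, so your argument is a complete substitute for the citation.
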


For convenience, we call the function~$f$ in~\eqref{e:BernsteinDef} the \emph{Bernstein transform} of the L\'evy triple $(a_0, a_\infty, \mu)$. If $a_0=a_\infty=0$, we call $f$
the Bernstein transform of $\mu$.

  L\'evy triples of the  form above arise naturally in the study of subordinators---right continuous, increasing (possibly infinite) processes  that have independent, time-homogeneous increments.
  We know (see for instance~\cite{Bertoin00a}) that the Laplace exponent of a subordinator can be uniquely expressed in the form \eqref{e:BernsteinDef}, as the Bernstein transform of 
  some L\'evy triple.
  In terms of the subordinator, $a_0$ represents the drift, $a_\infty$ the killing, and $\mu$ the jumps.%
  \ifdraft
\sidenote{Tue 06/16 GI: For the `usual' L\'evy triples in \eqref{e:PsiLim}, 
we have diffusion ($\alpha_0$), drift ($\alpha_\infty$), and jumps ($\mu$); how do these relate?
 
 RP: Nice Q. For a subordinator, $a_0$ is drift, $a_\infty$ is killing, and $\mu$ is jumps.
 But is there a deeper relation between subordinators and spectrally positive L\'evy processes? Dunno. \textcolor{red}{(Bob also promises not to delay publication by 3 years because of this point.)}}
  \fi

  
  We obtain a natural topology on L\'evy triples by associating to each L\'evy triple
  a finite measure on the compactified half-line $[0, \infty]$, including atoms at $0$ and $\infty$.
  Explicitly, to any L\'evy triple~$(a_0, a_\infty, \mu)$ we associate the finite measure $\kappa$ on $[0,\infty]$ defined by
  \begin{equation}\label{def:kappa}
    d\kappa(x) = a_0 \, d\delta_0(x) + a_\infty \, d\delta_\infty(x) + (x \varmin 1) \, d\mu(x) \,.
  \end{equation}
  We note that this association of finite measures with  L\'evy triples is bijective. 
  Now we use the weak-$\star$ topology of finite measures on $[0, \infty]$ to induce a topology on the set of L\'evy triples.  Note that for any $g\in C([0,\infty])$, we have
  \[
  \ip{g}{\kappa} = 
        g(0) a_0 + g(\infty) a_\infty + \int_{\R^+} g(x) ( x \varmin 1) \, d\mu(x) \,.
        \]
  \begin{definition}\label{d:LtConv}
    We say a sequence of L\'evy triples~$(a_0^{(k)}, a_\infty^{(k)}, \mu_k)$ converges to the L\'evy triple~$(a_0, a_\infty, \mu)$ if the corresponding sequence of measures $\kappa_k$  converges to $\kappa$ weak-$\star$ on $[0,\infty]$.
    That is, for every $g \in C([0, \infty])$ we have
    \[
    \ip{g}{\kappa_k} \to \ip{g}{\kappa} \quad\text{as } k\to\infty \,.
    \]
  \end{definition}
  \begin{remark}\label{r:Lconv}
  This is exactly the same as Definition~\ref{d:LtConv0}, restated here for convenience.
  Moreover, this generalizes the notion of L\'evy-convergence introduced in Definition~\ref{d:Lconv}.
  Indeed, L\'evy-convergence of $(\scalek\mu)$ to a measure $\kappa$ is exactly convergence of the L\'evy triples $(0, 0, \scalek\mu)$ to the associated L\'evy triple $(\kappa(0), \kappa(\infty), \kappa|_{(0, \infty)} )$.
\end{remark}

The following provides a continuity theorem for the Bernstein transform
that is not present in \cite{SchillingSongEA10}. It may be inferred from the proof of 
Theorem 3.1 in \cite{MenonPego08}, but \cite{MenonPego08} employs a different and rather cumbersome terminology, and the proof below is much simpler.

  \begin{theorem}\label{t:Cont}
    Let $(a_0^{(k)}, a_\infty^{(k)}, \mu_k)$ be a sequence of L\'evy triples
    with Bernstein transforms $f_k$ corresponding via \eqref{e:BernsteinDef} and
    measures $\kappa_k$ corresponding via \eqref{def:kappa}.  Then the following are equivalent.
    \begin{itemize}
\item[(i)]  $f(q) \defeq \lim_{k\to\infty} f_k(q)$ exists for each $q\in(0,\infty)$.
\item[(ii)]  $\kappa_k$ converges to some finite measure $\kappa$ weak-$\star$ on $[0,\infty]$.
That is, as $k\to\infty$,
\[
\ip{g}{\kappa_k} \to \ip{g}{\kappa} \quad\text{for every $g\in C([0,\infty])$}\,.
\]
        \end{itemize}
        If either condition holds, then the limits $f$, $\kappa$ correspond to 
        a unique L\'evy triple via \eqref{e:BernsteinDef} and \eqref{def:kappa} respectively.
  \end{theorem}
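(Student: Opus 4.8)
The plan is to mimic the classical continuity theorem for Laplace transforms, exploiting the representation~\eqref{e:BernsteinDef} and the compactness of $[0,\infty]$. I would first prove the easy direction, (ii)$\Rightarrow$(i): fix $q>0$ and note that the kernel $x\mapsto (1-e^{-qx})/(x\varmin 1)$, extended by the value $q$ at $x=0$ and by $1$ at $x=\infty$, is a function $g_q\in C([0,\infty])$. Then $f_k(q)=\ip{g_q}{\kappa_k}$ directly from~\eqref{e:BernsteinDef} and~\eqref{def:kappa} (the $a_0^{(k)}q$ term comes from the atom at $0$ since $g_q(0)=q$, the $a_\infty^{(k)}$ term from the atom at $\infty$ since $g_q(\infty)=1$, and the integral term from $(x\varmin1)\,d\mu_k$). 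Weak-$\star$ convergence $\kappa_k\to\kappa$ then gives $f_k(q)\to\ip{g_q}{\kappa}=:f(q)$, and this $f$ is manifestly the Bernstein transform of the L\'evy triple associated with $\kappa$ via~\eqref{def:kappa}, which is unique by Theorem~\ref{t:BernsteinRep}. This also settles the final ``uniqueness'' clause.

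For (i)$\Rightarrow$(ii), the main step is to establish weak-$\star$ precompactness of $(\kappa_k)$, i.e.\ a uniform bound on the total masses $\ip{1}{\kappa_k}=a_0^{(k)}+a_\infty^{(k)}+\int(x\varmin1)\,d\mu_k$. The trick is that $f_k'$ is the Laplace transform of the measure $a_0^{(k)}\delta_0+\mu_k$ (differentiate~\eqref{e:BernsteinDef}), so $f_k'(q)=a_0^{(k)}+\int e^{-qx}\,d\mu_k(x)$ is positive and decreasing. Fixing some $q_0>0$, I would bound $\int(x\varmin 1)\,d\mu_k \le C\,f_k'(q_0)$ for a constant $C$ depending only on $q_0$ (the function $(x\varmin1)$ is dominated by a constant times $e^{-q_0 x}$ for $x\ge1$ and by a constant times $x\le (1-e^{-q_0x})/c$ for $x\le1$), and $a_0^{(k)}\le f_k'(q_0)$ directly. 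To control $f_k'(q_0)$ itself: by convexity of $f_k$ on $\R^+$ (its derivative is completely monotone, hence decreasing), $f_k'(q_0)\le (f_k(2q_0)-f_k(q_0))/q_0 \le f_k(2q_0)/q_0$, which is bounded since $f_k(2q_0)\to f(2q_0)$. This also bounds $a_\infty^{(k)}=f_k(0^+)=\lim_{q\to0}f_k(q)\le f_k(q_0)$. Hence $\sup_k\ip{1}{\kappa_k}<\infty$.

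By precompactness every subsequence of $(\kappa_k)$ has a further subsequence converging weak-$\star$ on $[0,\infty]$ to some finite $\kappa$; by the (ii)$\Rightarrow$(i) direction just proved, along that subsequence $f_k(q)\to\ip{g_q}{\kappa}$ for every $q>0$, so this limit must equal the given $f(q)$. Thus $\ip{g_q}{\kappa}$ is the same for every subsequential limit $\kappa$; since the Bernstein transform determines the triple (Theorem~\ref{t:BernsteinRep}) and hence $\kappa$, all subsequential limits coincide, so the whole sequence $(\kappa_k)$ converges weak-$\star$ to a single $\kappa$. The main obstacle, as usual in continuity theorems, is precisely the tightness/precompactness estimate — making sure the masses near $0$, near $\infty$, and in the bulk are all controlled by a single evaluation of $f_k$ and its first difference; the convexity bound $f_k'(q_0)\le f_k(2q_0)/q_0$ is the key device that keeps this elementary.
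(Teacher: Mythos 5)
Your overall architecture is the same as the paper's: for (ii)$\Rightarrow$(i) you test $\kappa_k$ against the kernel $g_q(x)=(1-e^{-qx})/(x\wedge 1)\in C([0,\infty])$ so that $f_k(q)=\ip{g_q}{\kappa_k}$, and for (i)$\Rightarrow$(ii) you prove a uniform mass bound, invoke Banach--Alaoglu, and identify all subsequential limits through the bijectivity of the Bernstein transform (Theorem~\ref{t:BernsteinRep}). The easy direction and the endgame are correct and match the paper.

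The tightness step, however, contains genuine errors as written. First, differentiating \eqref{e:BernsteinDef} gives $f_k'(q)=a_0^{(k)}+\int_{\R^+} x\,e^{-qx}\,d\mu_k(x)$, i.e.\ $f_k'$ is the Laplace transform of $a_0^{(k)}\,d\delta_0+x\,d\mu_k(x)$, not of $a_0^{(k)}\delta_0+\mu_k$; your formula omits the factor $x$. More seriously, no bound of the form $\int_{\R^+}(x\wedge1)\,d\mu_k\le C\,f_k'(q_0)$ can hold: your claimed domination $(x\wedge 1)\le C e^{-q_0x}$ for $x\ge1$ is false (the right-hand side tends to $0$), and for instance $\mu_k=\delta_k$ gives $f_k'(q_0)=k e^{-q_0k}\to0$ while $\int(x\wedge1)\,d\mu_k=1$. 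So $f_k'(q_0)$ genuinely cannot control the tail mass; only $f_k$ itself can, via $1-e^{-q_0x}\ge 1-e^{-q_0}$ for $x\ge1$ (and indeed your own bound for $x\le1$ already uses $1-e^{-q_0x}$, i.e.\ $f_k$, not $f_k'$). Finally, the chord inequality $f_k'(q_0)\le (f_k(2q_0)-f_k(q_0))/q_0$ is the inequality for \emph{convex} functions, whereas $f_k$ is concave (its derivative is decreasing), so it goes the other way; the valid route to $f_k'(q_0)\le f_k(2q_0)/q_0$ is $f_k(q_0)\ge f_k(q_0)-f_k(0^+)=\int_0^{q_0}f_k'(r)\,dr\ge q_0 f_k'(q_0)$. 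All of these issues are repaired at once by the paper's observation that $f_k'$ is never needed: since $c_q\defeq\inf_{x\in[0,\infty]}g_q(x)>0$, one has $\ip{1}{\kappa_k}\le c_q^{-1}\ip{g_q}{\kappa_k}=c_q^{-1}f_k(q)$, which is bounded because $f_k(q)$ converges; with that substitution the rest of your argument goes through unchanged.
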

By this result, the sequence $(f_k)$ converges pointwise on $\R^+$ if and only if the sequence of L\'evy triples~$(a_0^{(k)}, a_\infty^{(k)}, \mu_k)$ 
converges in the sense of Definition~\ref{d:LtConv}. 
  \begin{proof}
    Let $(\kappa_k)$ be the sequence of measures associated with the L\'evy triples $(a_0^{(k)}, a_\infty^{(k)}, \mu_k )$ as in~\eqref{def:kappa}.
    Suppose first that $\kappa_k\to \kappa$ weak-$\star$ on $[0, \infty]$.
    For $q \in \R^+$, consider the test function $g_q:\R^+ \to \R$ defined by
    \begin{equation}\label{d:gq}
      g_q(x) = \frac{ 1 - e^{-qx} }{ x \varmin 1 } \,.
    \end{equation}
    By defining $g_q(0) = q$ and $g_q(\infty) = 1$ we can extend $g_q$ to a continuous function on $[0, \infty]$.
    Consequently,
    \begin{equation*}
      \ip{g_q}{\kappa}
	= \lim_{k \to \infty} \ip{g_q}{\kappa_k}
	= \lim_{k \to \infty}
	    a_0^{(k)} q + a_{\vphantom{0}\infty}^{(k)}
	    + \int_{\R^+} \paren[\big]{1 - e^{-qx}} \, d\mu(x)
	= \lim_{k \to \infty} f_k(q) \,,
    \end{equation*}
    establishing pointwise convergence of $(f_k)$ on $\R^+$ as desired.

    For the converse, suppose $(f_k) \to f$ pointwise on $\R^+$.
    Note that for $q \in \R^+$, we have
    \begin{equation*}
      c_q \defeq \inf_{x \in \R^+} g_q(x) > 0 \,.
    \end{equation*}
    Consequently,
    \begin{equation*}
      \sup_k{} \ip{1}{\kappa_k}
	\leq \sup_k \frac{1}{c_q} \ip{g_q}{\kappa_k}
	= \frac{1}{c_q} \sup_k f_k(q)
	< \infty \,.
    \end{equation*}
    Thus, by the Banach-Alaoglu theorem, any subsequence of $(\kappa_k)$ has a 
    further subsequence that is weak-$\star$ convergent on $[0, \infty]$. 
    Let $\kappa$ denote any such subsequential limit. 
By taking limits as above but along subsequences, we infer that for every $q\in\R^+$,
\[
\ip{g_q}{\kappa} = f(q) \,.
\]
This shows that $f$ is the Bernstein transform of the L\'evy triple associated to
$\kappa$ by \eqref{def:kappa}. Because both this association and the Bernstein transform
are bijective, $\kappa$ is uniquely determined by $f$. It follows that the entire sequence
$(\kappa_k)$ converges weak-$\star$ to the same limit $\kappa$. 
%
  \end{proof}
  
We finish by developing two further variations of the convergence conditions in the continuity theorem above.
To each L\'evy triple $(a_0,a_\infty,\mu)$, we associate two further quantities:
(a) The \emph{Bernstein primitive} (or branching mechanism)
\begin{equation} \label{def:Bprimitive}
\psi(q) = \frac12 a_0 q^2 + a_\infty q + \int_{\R^+} 
\frac{e^{-qx}-1+qx}x \,d\mu(x) \,,
\end{equation}
and (b) the pair of \emph{left and right distribution functions} $(\kappa_L,\kappa_R)$ given by
\begin{equation} \label{def:kappaLR}
\kappa_L(x) = a_0+\int_{(0,x]}z\,d\mu(z) \,, \quad
\kappa_R(x) = a_\infty + \int_{(x,\infty)}\,d\mu(z) \,.
\end{equation}
These distribution functions are associated to Radon measures, also denoted by
$\kappa_L$ on $[0,\infty)$ and $\kappa_R$ on $(0,\infty]$, in a standard way.

\begin{theorem}\label{t:LRconv}
Make the same assumptions as in Theorem~\ref{t:Cont}, and for each $k$
associate $\psi_k$ and $(\kappa_{L,k},\kappa_{R,k})$ via \eqref{def:Bprimitive} and \eqref{def:kappaLR} respectively.
Then the following conditions are also equivalent to (i) and (ii).
\begin{itemize}
\item[(iii)] $\psi(q) \defeq \lim_{k\to\infty}\psi_k(q)$ exists for each $q\in(0,\infty)$. 
\item[(iv)] For almost every $x\in(0,\infty)$, both of the following limits exist:
\begin{equation}
\kappa_L(x) = \lim_{k\to\infty} \kappa_{L,k}(x)\,, \quad
\kappa_R(x) = \lim_{k\to\infty} \kappa_{R,k}(x)\,.
\end{equation}
\end{itemize}
\end{theorem}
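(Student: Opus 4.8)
The strategy is to prove the chain of equivalences (i) $\Leftrightarrow$ (ii) $\Leftrightarrow$ (iii) $\Leftrightarrow$ (iv), using Theorem~\ref{t:Cont} to take (i)$\Leftrightarrow$(ii) for granted. I would establish (ii)$\Leftrightarrow$(iv) and (ii)$\Leftrightarrow$(iii) separately, each by exhibiting the relevant objects as continuous (or nearly continuous) images of the measure $\kappa_k$ on $[0,\infty]$, so that weak-$\star$ convergence of $\kappa_k$ translates directly into pointwise convergence of the images, and conversely a compactness argument recovers weak-$\star$ convergence from pointwise limits.

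\textbf{Step 1: (ii) $\Rightarrow$ (iv).} Observe from \eqref{def:kappa} and \eqref{def:kappaLR} that $\kappa_{L,k}(x)$ and $\kappa_{R,k}(x)$ are integrals of $\kappa_k$ against the indicator-type kernels $x\mapsto \one_{(0,x]}(z)\cdot\frac{z}{z\varmin1}$ and $x\mapsto \one_{(x,\infty]}(z)\cdot\frac{1}{z\varmin1}$ (with the convention that $\one_{(x,\infty]}$ includes the point $\infty$, which contributes $a_\infty$). These kernels are bounded but discontinuous only at the single point $z=x$. Hence for any $x$ that is not an atom of the limiting measure $\kappa$ — i.e. for all but countably many $x$, hence a.e.\ $x$ — the Portmanteau theorem gives $\kappa_{L,k}(x)\to\kappa_L(x)$ and $\kappa_{R,k}(x)\to\kappa_R(x)$, where $\kappa_L,\kappa_R$ are built from the limit triple associated to $\kappa$. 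This gives (iv).

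\textbf{Step 2: (iv) $\Rightarrow$ (ii).} From the existence of the pointwise limits $\kappa_L(x),\kappa_R(x)$ at, say, some fixed point $x_0$ of convergence, one reads off a uniform bound: $\ip{1}{\kappa_k} = a_0^{(k)} + a_\infty^{(k)} + \int_{\R^+}(z\varmin1)\,d\mu_k(z)$ is controlled by $\kappa_{L,k}(x_0) + \kappa_{R,k}(x_0) + \int_{(x_0,\infty)}(z\varmin1)\,d\mu_k(z)$, and the last integral is in turn bounded by $\max(x_0,1)\,\kappa_{R,k}(x_0)$. So $\sup_k\ip{1}{\kappa_k}<\infty$, and by Banach--Alaoglu every subsequence of $(\kappa_k)$ has a weak-$\star$ convergent sub-subsequence, with limit $\kappa$. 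By Step~1 any such limit has left/right distribution functions equal, at their common continuity points, to the prescribed limits $\kappa_L,\kappa_R$; since a L\'evy triple is determined by its pair of distribution functions (the measure $\mu$ is recovered from $\kappa_L$ restricted to $(0,\infty)$ via $z\,d\mu = d\kappa_L$ there, then $a_0=\kappa_L(0^+)$ — wait, rather $a_0 = \lim_{x\to0^+}\kappa_L(x)$ — and $a_\infty = \lim_{x\to\infty}\kappa_R(x)$), the subsequential limit is unique, so the whole sequence converges. This is essentially the same uniqueness-plus-compactness template used in the proof of Theorem~\ref{t:Cont}.

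\textbf{Step 3: (ii) $\Leftrightarrow$ (iii).} For the primitive, rewrite \eqref{def:Bprimitive} as an integral against $\kappa_k$: with
\[
h_q(x) = \frac{1}{x\varmin1}\cdot\frac{e^{-qx}-1+qx}{x}\,,
\]
extended by $h_q(0) = \tfrac12 q^2$ and $h_q(\infty) = q$, one checks $h_q\in C([0,\infty])$ and $\psi_k(q) = \ip{h_q}{\kappa_k}$ (the $\tfrac12 a_0 q^2$ and $a_\infty q$ terms coming from the atoms at $0$ and $\infty$). So (ii) immediately gives (iii). For the converse, the key point is that $h_q(x)\ge c_q>0$ uniformly in $x$ for each fixed $q>0$ (since $(e^{-qx}-1+qx)/x$ is positive and behaves like $\tfrac12 q^2 x$ near $0$ and like $q - 1/x \to q$ near $\infty$, and $\frac{1}{x\varmin1}\ge \min(1,\,1/x)$ times that is bounded below), so $\sup_k\ip{1}{\kappa_k}\le c_q^{-1}\sup_k\psi_k(q)<\infty$; then the same Banach--Alaoglu-plus-uniqueness argument as in Step~2 (and in Theorem~\ref{t:Cont}) closes the loop, using that $\psi$ determines the triple via \eqref{def:Bprimitive} (equivalently, $\psi' $ is the Bernstein transform, which is injective by Theorem~\ref{t:BernsteinRep}).

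\textbf{Main obstacle.} The only genuinely delicate point is Step~1: the kernels defining $\kappa_{L,k}$ and $\kappa_{R,k}$ are not continuous on $[0,\infty]$, so one cannot invoke weak-$\star$ convergence naively. The fix is standard — restrict to continuity points of the limiting measure $\kappa$ and squeeze $\kappa_{L,k}(x)$ between $\ip{g^-}{\kappa_k}$ and $\ip{g^+}{\kappa_k}$ for continuous functions $g^\pm$ bracketing the indicator, letting the bracket shrink. I would write this out carefully for the left distribution function and remark that the right one is symmetric, taking care that the atom at $\infty$ is correctly absorbed into $\kappa_R$ (which lives on $(0,\infty]$) and not into $\kappa_L$. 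The remaining steps are routine reprises of the compactness/uniqueness scheme already deployed for Theorem~\ref{t:Cont}.
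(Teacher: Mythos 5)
Your proof is correct, but it is organized quite differently from the paper's. The paper never routes condition (iii) through weak-$\star$ compactness of measures at all: it proves (i)$\Leftrightarrow$(iii) directly from the relation $\psi_k(q)=\int_0^q f_k(r)\,dr$, using that each $f_k$ is increasing and concave (dominated convergence one way; for the converse, the bound $\psi_k(2q)\ge q\,f_k(q)$ gives precompactness of the values, a Helly-type selection for increasing concave functions gives subsequential pointwise limits, and uniqueness follows because $\psi$ determines $f=\psi'$). For (ii)$\Leftrightarrow$(iv) the paper works at the level of the Radon measures $\kappa_{L,k}$, $\kappa_{R,k}$: it transfers test functions, pairing $g_0\in C_c([0,\infty))$ and $g_\infty\in C_c((0,\infty])$ against $\kappa_{L,k},\kappa_{R,k}$ and the correspondingly reweighted functions against $\kappa_k$, and in the converse direction splits an arbitrary $g\in C([0,\infty])$ by a smooth cutoff; the pointwise a.e.\ statement (iv) is then the standard reformulation of this vague convergence. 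You instead anchor both (iii) and (iv) at (ii), re-running the compactness-plus-uniqueness template of Theorem~\ref{t:Cont} twice: (ii)$\Rightarrow$(iv) by portmanteau at the non-atoms of $\kappa$, (iv)$\Rightarrow$(ii) by a mass bound read off at a single convergence point plus Banach--Alaoglu and recovery of the triple from $(\kappa_L,\kappa_R)$, and (ii)$\Leftrightarrow$(iii) via the kernel $h_q\in C([0,\infty])$ with $\psi_k(q)=\ip{h_q}{\kappa_k}$, $\inf h_q>0$, and injectivity from Theorem~\ref{t:BernsteinRep}. Both routes are sound; yours is more uniform in style and makes the representation $\psi(q)=\ip{h_q}{\kappa}$ explicit, while the paper's handling of (iii) is more elementary (one-dimensional analysis of concave functions, no measure compactness) and its duality argument for (iv) needs no discussion of atoms. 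One small slip to fix in your Step~1: the kernel representing $\kappa_{L,k}(x)$ must include the atom of $\kappa_k$ at $z=0$, i.e.\ it is $\one_{[0,x]}(z)\,(1\vee z)$ with the continuous value $1$ at $z=0$; your $\one_{(0,x]}$ as written would drop $a_0^{(k)}$. With that correction the portmanteau argument is exactly right.
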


\begin{remark*}
The condition in (iv) is equivalent to weak-$\star$ convergence of
the measures $\kappa_{L,k}$ and $\kappa_{R,k}$ on the intervals 
$[0,\infty)$ and $(0,\infty]$ respectively. 
\end{remark*}

\begin{proof}
By Theorem~\ref{t:Cont}, it suffices to show (i) is equivalent to (iii), and (ii) is equivalent to (iv).

Suppose (i) holds. Note that $f_k$ is increasing and concave for all $k$, hence for each $q\in\R^+$, by dominated convergence we have
\[
\psi_k(q) = \int_0^q f_k(r)\,dr \to \int_0^q f(r)\,dr \eqdef \psi(q) \,.
\]
Hence (iii) holds. Conversely, suppose (iii) holds. Note that $\psi_k(2q)\geq q f_k(q)$ for all $q>0$. For each $q>0$, it follows $(f_k(q))$ is precompact. 
Because $f_k$ is increasing and concave, each subsequence has a further 
subsequence that converges pointwise to some (increasing and concave) function $f$.
Necessarily,
\[
\int_0^q f(r)\,dr = \psi(q) \quad\text{for each $q>0$} \,,
\]
therefore $\psi$ is $C^1$ and $\psi'=f$. Thus $f$ is determined by $\psi$,
and it follows that the whole sequence $(f_k)$ converges, proving (i). 

Next we prove (ii) implies (iv). Given $\kappa$ as in (ii) we may define 
$\kappa_L,\kappa_R$ so that 
\begin{equation}
d\kappa_L(x) = \frac{x\, d\kappa(x)}{x\wedge 1} \,, \quad
d\kappa_R(x) = \frac{ d\kappa(x)}{x\wedge 1}\,,
\end{equation}
on the intervals $[0,\infty)$, $(0,\infty]$ respectively.
For any $g_0\in C_c([0,\infty))$ and $g_\infty\in C_c((0,\infty])$, let 
\begin{equation}
g_L(x) = \frac{x\, g_0(x)}{x\wedge 1}\,, \quad
g_R(x) = \frac{g_\infty(x)}{x\wedge 1}\,.
\label{def:gLR}
\end{equation}
Then as $k\to\infty$ we have 
\begin{align}
&\ip{g_0}{\kappa_{L,k}} = \ip{g_L}{\kappa_k} \to \ip{g_L}{\kappa} \eqdef \ip{g_0}{\kappa_L} \,,
\\
&\ip{g_\infty}{\kappa_{R,k}} = \ip{g_R}{\kappa_k}\to\ip{g_R}{\kappa} \eqdef \ip{g_\infty}{\kappa_R} \,.
\end{align}
This establishes weak-$\star$ convergence of the measures
$\kappa_{L,k}$ to $\kappa_L$ and $\kappa_{R,k}$ to $\kappa_R$, hence (iv) holds.

Conversely, assume (iv).  Fix smooth cutoff functions $v_L, v_R:[0,\infty]\to[0,1]$ such
that $v_L+v_R=1$, $v_L=1$ on $[0,1]$ and $v_R=1$ on $[2,\infty]$. Given any 
$g\in C([0,\infty])$ we write $g=g_L+g_R$ where $g_L=gv_L $, $g_R=gv_R $,
and determine $g_0$, $g_\infty$ by \eqref{def:gLR}.
Then 
\begin{equation}
\ip{g_L+g_R}{\kappa_k} = \ip{g_0}{\kappa_{L,k}}+\ip{g_\infty}{\kappa_{R,k}}
\to \ip{g_0}{\kappa_L}+\ip{g_\infty}{\kappa_R}
\end{equation}
as $k\to\infty$. Because $\lim_{k\to\infty}\ip{g}{\kappa_k}$  exists for each $g$, there exists 
a measure $\kappa$ on $[0,\infty]$ with $\kappa_k\to\kappa$ weak-$\star$ on $[0,\infty]$.
Thus (ii) holds.
\end{proof}

%
%
%
  \bibliographystyle{halpha-abbrv}
  \bibliography{refs}

\end{document}
  
  \pagebreak
  \ifdraft\input{todo}\fi
